\def\NAT@def@citea{\def\@citea{\NAT@separator}}
\theoremstyle{plain}
\newtheorem{theorem}{Theorem}[section]
\newtheorem{lemma}[theorem]{Lemma}
\newtheorem{corollary}[theorem]{Corollary}
 \newtheorem{proposition}[theorem]{Proposition}
\newcommand\normx[1]{\left\Vert#1\right\Vert}
\newcommand{\pr}[1]{{\partial{#1}}}
\theoremstyle{definition}
\newtheorem{definition}[theorem]{Definition}
\newtheorem{example}[theorem]{Example}
\theoremstyle{remark}
\newtheorem{remark}{Remark}
\def\bmatrix#1{\left[\begin{matrix}
		#1
	\end{matrix}\right]}
\def \diag{\mathrm{diag}}
\def \R{{\mathbb R}}
\def \C{{\mathscr{C}}}
\def \D{\Delta}
\def \E{{\mathbf E}}
\def \F{{\mathbf F}}
\def \L{{\mathrm L}}
\def \U{{\mathrm U}}
\def \Da{{\mathrm D}}
\def \sign{\mathrm{sign}}
\def \rank{\mathrm{rank}}
\def \ep{{\varepsilon}}
\def \M{{\mathscr{M}}}
\def \a{{\bf a}}
\def \b{{\bf b}}
\def \d{{\bf d}}
\def \e{{\bf e}}
\def \f{{\bf f}}
\def \g{{\bf g}}
\def \h{{\bf h}}
\def \r{{\bf r}}
\def \S{\mathcal{S}}
\def \vr{{\Bigl\vert }}
\def \Vr{{\Bigl\Vert }}
\def \x{{\bf x}}
\newcommand{\dm}[1]{{\displaystyle{#1}}}
\begin{document}


\title{Condition numbers for the Moore-Penrose inverse and the least squares problem involving rank-structured matrices}

\author{
\name{Sk. Safique Ahmad\textsuperscript{a}\thanks{CONTACT Sk. Safique Ahmad Email: safique@iiti.ac.in; Pinki Khatun Email: pinki996.pk@gmail.com} and Pinki Khatun\textsuperscript{a}}
\affil{\textsuperscript{a}Department  of Mathematics, Indian Institute of Technology Indore, Khandwa Road, Indore, 453552, Madhya Pradesh, India}
}

 \maketitle

\begin{abstract}
Perturbation theory plays a crucial role in sensitivity analysis, which is extensively used to assess the robustness of numerical techniques. 
		To quantify the relative sensitivity of any problem, it becomes essential to investigate structured condition numbers (CNs) via componentwise perturbation theory. This paper addresses and analyzes structured mixed condition number (MCN) and componentwise condition number (CCN) for the Moore-Penrose (M-P) inverse and the minimum norm least squares (MNLS) solution involving rank-structured matrices, which include the Cauchy-Vandermonde (CV) matrices and $\{1,1\}$-quasiseparable (QS) matrices. A general framework has been developed to compute the upper bounds for MCN and CCN of rank deficient parameterized matrices. This framework leads to faster computation of upper bounds of structured  CNs for CV and $\{1,1\}$-QS matrices.
		Furthermore, comparisons of obtained upper bounds are investigated theoretically and experimentally.
		In addition, the structured effective CNs for the M-P inverse and the MNLS solution of $\{1,1\}$-QS matrices are presented. Numerical tests reveal the reliability of the proposed upper bounds as well as demonstrate that the structured effective  CNs are computationally less expensive and can be substantially smaller compared to the unstructured CNs.
\end{abstract}

\begin{keywords}
Rank-structured matrices; Condition number; Moore-Penrose inverse; Minimum norm least squares solution; Cauchy-Vandermonde matrices; Quasiseparable matrices
\end{keywords}
\noindent {\bf AMS subject classification.} 15A09; 15A12; 65F20; 65F35 

\section{Introduction}
Condition numbers (CNs) are one of the most important tools in matrix computation since they are widely employed to investigate the stability and robustness of numerical algorithms; see \cite{higham2002accuracy}. 
It measures how sensitive, in the worst-case scenario, a problem is to a slight change in input data and is crucial in determining whether a numerical solution makes sense.  On the other hand, the backward error is used to find a nearly perturbed problem with minimal magnitude perturbations so that the calculated solution becomes an actual solution to the perturbed problem. One can estimate the forward error of an approximate solution by combining the backward error with the CN (see \cite{burgisser2013condition} for more on  CNs).

Most likely, for the first time, Rice \cite{rice1966theory} introduced the classical theory of  CNs. In accordance with \cite{rice1966theory}, the normwise CN, which has been extensively considered in the literature, quantifies the input and output data errors using the norms.
The normwise CN has the drawback of ignoring the input and output data structures when the data is poorly scaled or sparse. Consequently, even minor relative normwise perturbations can have a disproportionate impact on entries that are small or zero, thereby potentially compromising data sparsity.
To address this challenge, mixed condition number (MCN) and componentwise condition number (CCN) \cite{skeel1979scaling, rohn1989new} emerge as a viable solution. 
MCN measures the input perturbations componentwise and the output error using norms, whereas CCN measures both the error in output data and the input perturbations componentwisely.

 The Moore-Penrose (M-P) inverse holds a pivotal position in matrix computation, offering a generalization of the standard inverse for rectangular or rank deficient matrices \(M\in \mathbb{R}^{m\times n}\). It is a unique matrix \(Y\in \mathbb{R}^{n\times m}\) that satisfies the following equations: \cite{stewart1990matrix}
\begin{align*}
	MYM=M ,\, YMY=Y, \,  (MY)^{\top}=MY, \, (YM)^{\top}=YM.
\end{align*}
Typically, it is denoted by $Y=M^{\dagger}.$ The M-P inverse finds its practical significance in solving the linear least squares (LS) problem, where the goal is to minimize the expression:
\begin{align}\label{eq11}
   \min_{z\in \R^{n}}\|Mz-b\|_2,
\end{align}
 where $M\in \R^{m\times n}$ and $b\in \R^m.$ Here, $\|\cdot\|_2$ denotes the spectral norm. If $M$ is of full column rank, the LS problem has a unique solution given by   $\x=M^{\dagger}b.$ However, when $M$ is rank deficient, the LS problem has infinitely many solutions, and the set of all solutions forms a closed and convex set. Consequently, this set uniquely yields the minimum norm LS (MNLS) solution $\x$, defined as $\x=M^{\dagger}b$. The M-P inverse and the LS problem have various  applications in digital image restoration and reconstruction \cite{imagerecon,imagerest}, Gauss–Markov
model \cite{Rao}, and so on. The literature on CNs for the M-P inverse \cite{NCN2007} and LS problems \cite{thesis}  is quite rich. The normwise CN for the M-P inverse and the LS problem is investigated in \cite{Gratton1996, Malyshev2003ls, NCN2007}, while MCN and CCN are considered in  \cite{cucker2007mixed,cucker2007mixed2}. For structured matrices, structured condition numbers for the M-P inverse and the LS problem have been investigated in \cite{xu2006condition, cucker2007mixed2}, which involves the preservation of the inherent matrix structure within the perturbation matrices.

In the past few years, many fast algorithms have been developed for various problems involving rank-structured matrices, such as computing eigenvalues and singular values \cite{vandebril2007matrix2,yang2021accurate}, solving linear systems \cite{vandebril2007matrix1} and LS problems \cite{huang2019accurate}, and computing the  M-P inverse \cite{chandrasekaran2002fast}. Quasiseparable (QS) \cite{eidelman1999new}, Cauchy \cite{huang2019accurate}, and Cauchy-Vandermonde (CV) \cite{huang2019accurate} matrices are popular examples of rank-structured matrices that arise in many applications such as in boundary value problem \cite{ greengard1991numerical, lee1997fast}, acoustic and electromagnetic scattering theory \cite{colton1998inverse}, interpolation problems \cite{muhlbach2000interpolation},
rational models of regression and E-optimal design \cite{imhof2001optimal}, and so on.

One of the striking properties of the rank-structured matrices is that they can be parameterized by $\mathcal{O}(m+n)$ parameters rather than $mn$ entries. Based on this property, many fast algorithms with lower computational costs have been developed \cite{vandebril2007matrix1, vandebril2007matrix2, vandebril2005note}. Plenty of works involving rank-structured matrices have been done in recent years to investigate the structured CNs for eigenvalue problems \cite{dopico2016structured, diao2019structured}, the solution of a linear system having a single as well as multiple right-hand sides \cite{martinez2016structured,meng2020structured}, the Sylvester matrix equation \cite{diao2023structured}, and so on, by considering perturbations on the parameters. Based on the above discussions, it is more sensible to investigate structured CNs by addressing perturbations on the parameters rather than directly on the matrix entries and to identify which set of parameters will be more suited for the development of fast algorithms. Thus, the forgoing discussion motivates us to consider perturbations on parameters instead of directly on entries in this paper. 

{In \cite{thesis}, authors have presented general parameterized QS representation and Givens-vector (GV) representation for the rectangular $m\times n ~(m\geq n)$ $\{1,1\}$-QS matrices (a special case of QS matrices), which are natural extensions of the square matrix case discussed in \cite{dopico2016structured, martinez2016structured, vandebril2005note}. Then, the authors studied the structured MCN for the LS problems when the coefficient matrix is a full column rank $m\times n$ $\{1,1\}$-QS matrices. Explicit expressions are derived with respect to both QS and GV representations. Additionally, comparison results between obtained structured CNs with respect to both representation and  with the upper bounds of the unstructured MCN are presented. Numerical experiments in \cite{thesis} showed that the structured MCNs can be much smaller than the unstructured ones. 
However, the above investigations do not address the rank deficient case. Furthermore, the MCN and CCN for the M-P inverse of rank deficient rank-structured matrices still need to be explored in the literature.} 
   Nevertheless, when dealing with rank deficient matrices, a prominent challenge in analyzing the CNs arises from the fact that even slight changes to the matrix can yield enormous variations in the computed M-P inverse. In light of this, normwise CNs for rank deficient unstructured matrices have been considered in \cite{wei2003condition,wei2007condition}, and for structured matrices in \cite{xu2006condition} under the assumptions: $\mathcal{R}(\D M)\subset \mathcal{R}(M)$ and $\mathcal{R}(\D M^{\top})\subset \mathcal{R}(M^{\top}),$ on the perturbation matrix $\D M$ in $M,$ where $\mathcal{R}(M)$ denotes the range of $M.$  Whereas, in \cite{yimin2005componentwise}, upper bounds are investigated for CCN for unstructured matrices under the above assumptions.  

This paper's central aim is to study the structured  MCN and CCN for the M-P inverse and the LS problem when dealing with rank deficient rank-structured matrices.
This investigation adheres to the rank-preserving constraint, denoted as $\text{rank}(M+\Delta M)=\text{rank}(M)$, which encompasses a broader class of perturbation matrices than those constrained by $\mathcal{R}(\Delta M)\subset \mathcal{R}(M)$ and $\mathcal{R}(\Delta M^{\top})\subset \mathcal{R}(M^{\top})$. This perspective expands the horizons of our study and offers valuable insights into structured CNs for this class of matrices.

The following highlights the main contributions of this paper: 
\begin{itemize}
	\item The MCN and CCN for two problems, the M-P inverse and the MNLS solution of the LS problem, involving rank deficient CV  and $\{1,1\}$-QS matrices are considered under the broader rank condition, i.e., $\text{rank}(M+\Delta M)=\text{rank}(M)$. 
	\item By considering matrix entries to be differentiable functions of a set of real parameters, we develop a general framework to compute the upper bounds of the MCN and CCN of the M-P inverse and LS problem for rank deficient parameterized matrices. In addition, exact expressions in the full column rank case of the MCN and CCN are also obtained.
	\item For the CV and $\{1,1\}$-QS matrices, compact upper bounds are obtained for structured MCN and CCN. Two important parameter representations for  $\{1,1\}$-QS matrices are considered: the  QS representation and GV representation.
	\item For $\{1,1\}$-QS matrices, structured effective CNs are proposed and shown that they can reliably estimate the actual conditioning of these matrices. Numerical experiments are reported to demonstrate that structured CNs are significantly smaller compared to unstructured CNs and align consistently with the theoretical results.
\end{itemize}

The remaining part of this paper is structured as follows. Section $\ref{s2}$ provides a few notations and preliminary results. In Section $\ref{s3},$ for the M-P inverse and the MNLS solution, we develop expressions of upper bounds for MCN and CCN for a general class of parameterized matrices. These frameworks are utilized in Sections $\ref{sec4}$ and $\ref{sec5}$ to derive the bounds for structured MCN and CCN for CV and $\{1,1\}$-QS matrices. Further, Section $\ref{sec5}$ studies comparison results between different structured and unstructured CNs. In Section $\ref{s6}$, numerical experiments are performed to illustrate our findings. Section $\ref{s7}$ ends with conclusions and a line of future research.
\section{Notation and preliminaries}\label{s2}  
The following notations are adopted throughout this work. $\R^{m\times n}$  denotes the collection of all $m\times n$ matrices  and $\R^m$ denotes the space of all column vectors of dimension $m,$ where $m,n$ are positive integers. To indicate the M-P inverse and  transpose of any matrix $M\in \mathbb{R}^{m\times n},$ $M^{\dagger}$ and $M^{\top}$ are used,  respectively. The $i$-th column of the identity matrix $I_m$ of order $m$ is denoted as $e_i^m$. For $M\in \R^{m\times n},$ set $\E_M:=I_m-MM^{\dagger}$ and $\F_M:=I_n-M^{\dagger}M.$ Denote ${\bf 0}$ as the zero matrix with conformal dimension. The Hadamard product of $M=[m_{ij}]\in \R^{m\times n}$ and $N=[n_{ij}]\in \R^{m\times n}$ is defined by $M\odot N: =[m_{ij} n_{ij}]\in \R^{m\times n}.$ For  $x=[x_i]\in \R^n$ and $M\in \R^{m\times n},$ the infinity and $\max$ norm are defined by $\|x\|_{\infty}:=\underset{i}{\text{max}} \,|x_i|$ and $ \|M\|_{\max}:=\underset{i,j}{\text{max}} \,|m_{ij}|,$ respectively. We denote $E_{ij}^{mn}=e_i^m(e_j^n)^{\top}$ as the matrix with $ij$-element is $1$ and zero elsewhere. The notation $i=1:n$ indicates that $i$ takes the values  $1,2,\ldots,n.$ For any $M\in \R^{m\times n},$ let $|M|=[|m_{ij}|].$  For matrices $M, N\in \R^{m\times n},$ $|M|\leq |N|$ means $|m_{ij}|\leq |n_{ij}|$ for $i=1 : m$ and $j=1:n.$ We define $M/N$ as  $(M/N)_{ij}=n_{ij}^{\ddagger}m_{ij},$ where for any $a\in \R,$ $a^{\ddagger}=\frac{1}{a}$ when $a\neq 0,$ otherwise  $a^{\ddagger}=1.$ ${\Theta}_{x}=\diag(x)\in \R^{n\times n}$ is the diagonal matrix, where $x\in \R^n.$ For any $a\in \R,$ $\sign(a):=\frac{a}{|a|}$  for $a\neq 0$ and $\sign(a):=0$ for $a=0$, and $\sign(M):=[\sign (m_{ij})].$ The $i$-th row and the $j$-th column of $M,$ are represented as $M(i,:)$ and $M(:,j),$ respectively.

Next, we discuss some important properties of the M-P inverse, which will be crucial for our main finding results. 
The following lemma states that for a full column rank matrices $M,$ its M-P inverse is a continuous function of its data entries.
\begin{lemma}\label{lm21}\cite{wang2018generalized}
	Let  $M\in \mathbb{R}^{m\times n}$ with full column rank and $\{E_j\}$ be a collection of real $m \times n$ matrices satisfying ${\displaystyle \lim_{j\rightarrow 0} }E_j={\bf 0}.$ Then, $(M+E_j)$ has full column rank when $j$ is small enough and ${\displaystyle \lim _{ j \rightarrow 0}}(M+E_j)^{\dagger}=M^{\dagger}.$
\end{lemma}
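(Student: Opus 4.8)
The plan is to exploit the explicit closed-form representation of the Moore-Penrose inverse that is available precisely in the full column rank case, together with the continuity of matrix inversion and of the singular values. First I would recall that since $M$ has full column rank, the Gram matrix $M^{\top}M\in\R^{n\times n}$ is symmetric positive definite, hence invertible, and one has the identity $M^{\dagger}=(M^{\top}M)^{-1}M^{\top}$. This converts the problem from one about the (genuinely discontinuous) pseudoinverse map to one about the inverse of a nonsingular matrix, where continuity is standard.

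The first step is to establish that $M+E_j$ retains full column rank for all sufficiently small $j$. I would argue this through the smallest singular value: full column rank is equivalent to $\sigma_n(M)>0$, and since singular values are Lipschitz continuous functions of the matrix entries---for instance by Weyl's perturbation bound $|\sigma_n(M+E_j)-\sigma_n(M)|\le\|E_j\|_2$---the hypothesis ${\displaystyle\lim_{j\to 0}}E_j=\mathbf{0}$ gives $\sigma_n(M+E_j)\to\sigma_n(M)>0$. Thus for $j$ small enough $\sigma_n(M+E_j)>0$, i.e.\ $M+E_j$ has full column rank. An equivalent route is to observe that $(M+E_j)^{\top}(M+E_j)\to M^{\top}M$ and that the determinant is continuous, so the Gram matrix stays nonsingular for small $j$; either argument yields the openness of the full-rank condition.

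With full column rank secured, I would then write $(M+E_j)^{\dagger}=\big((M+E_j)^{\top}(M+E_j)\big)^{-1}(M+E_j)^{\top}$ and pass to the limit. Since matrix addition, multiplication, and transposition are continuous, $(M+E_j)^{\top}(M+E_j)\to M^{\top}M$ and $(M+E_j)^{\top}\to M^{\top}$; and since matrix inversion is continuous on the open set of invertible matrices, $\big((M+E_j)^{\top}(M+E_j)\big)^{-1}\to(M^{\top}M)^{-1}$. Combining these limits yields $(M+E_j)^{\dagger}\to(M^{\top}M)^{-1}M^{\top}=M^{\dagger}$, which is the claim.

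The main obstacle---and really the only nontrivial point---is the preservation of full column rank under perturbation, because without it the formula $M^{\dagger}=(M^{\top}M)^{-1}M^{\top}$ no longer applies and the pseudoinverse can jump discontinuously (this is exactly the phenomenon that the rest of the paper must confront in the rank deficient setting). The continuity of the singular values (or of the determinant of the Gram matrix) is what makes the rank condition an open condition and thereby rescues the argument; everything after that reduces to the routine continuity of the algebraic operations involved.
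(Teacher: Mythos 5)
Your argument is correct. Note that the paper itself offers no proof of this lemma --- it is quoted verbatim from the cited reference on generalized inverses --- so there is no in-paper argument to compare against; what you have written is the standard textbook proof. Both halves of your reasoning are sound: the preservation of full column rank follows from Weyl's bound $|\sigma_n(M+E_j)-\sigma_n(M)|\le\|E_j\|_2$ (equivalently, from the continuity of $\det\big((M+E_j)^{\top}(M+E_j)\big)$), and once the rank is secured the identity $(M+E_j)^{\dagger}=\big((M+E_j)^{\top}(M+E_j)\big)^{-1}(M+E_j)^{\top}$ reduces the claim to the continuity of matrix inversion at the nonsingular matrix $M^{\top}M$. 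You also correctly identify the rank preservation as the one genuinely structural step --- it is exactly the step that fails in the rank deficient setting, which is why the paper must introduce the acute-perturbation and rank-preserving constraints of Proposition 2.3 later on.
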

However, $M$ does not share the above property when it is singular or rank deficient. Small perturbation $\D M$ on $M$ can produce the computed M-P inverse far from the actual one. To tackle this situation, perturbation theory for the M-P inverse has been studied in certain specific constraints. Next, we recall the definition of `acute' perturbation.
\begin{definition}\cite{ stewart1990matrix}
     An acute perturbation $\tilde{M}=M+\D M\in \R^{m\times n}$ of $M\in \R^{m\times n}$ is a perturbed matrix  for which $\|MM^{\dagger}-\tilde{M}\tilde{M}^{\dagger}\|_2< 1$ and $\|M^{\dagger}M-\tilde{M}^{\dagger}\tilde{M}\|_2< 1,$ where $\|\cdot\|_2$ is the spectral norm.
\end{definition}
Proposition \ref{prop21} provides an if and only if condition for the continuity of $M^{\dagger}$ of any matrix $M\in \R^{m\times n}.$
\begin{proposition}\label{prop21}
     Let $M\in \R^{m\times n}.$ Consider the set $\S^1(M)=\big\{\D M\in \R^{m\times n}: \|M^{\dagger}\|_2\| \D M\|_2< 1\big\}.$ Then ${\displaystyle \lim_{\D M \rightarrow {\bf 0}}}(M+\D M)^{\dagger}=M^{\dagger}$ if and only if $\rank(M+\D M)=\rank(M),$ where $\D M\in \S^1(M).$
\end{proposition}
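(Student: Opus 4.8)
The plan is to prove the two implications separately, anchored by a single preliminary observation: within $\S^1(M)$ the rank can never drop. Writing $r=\rank(M)$, the hypothesis $\|M^{\dagger}\|_2\|\D M\|_2<1$ reads $\|\D M\|_2<\sigma_r(M)$, where $\sigma_r(M)=1/\|M^{\dagger}\|_2$ is the smallest nonzero singular value of $M$. Weyl's perturbation inequality for singular values then gives $\sigma_r(M+\D M)\ge \sigma_r(M)-\|\D M\|_2>0$, so $M+\D M$ has at least $r$ nonzero singular values, i.e.\ $\rank(M+\D M)\ge r$. Consequently, for $\D M\in\S^1(M)$ the equality $\rank(M+\D M)=\rank(M)$ can fail only when the rank strictly \emph{increases}; this dichotomy is what makes both directions tractable.

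For the direction ``rank preserved $\Rightarrow$ continuity'', I would fix a rank-preserving $\D M\in\S^1(M)$, set $B=M+\D M$, and first control $\|B^{\dagger}\|_2$. Since $\rank(B)=r$, one has $\|B^{\dagger}\|_2=1/\sigma_r(B)$, and combining this with the Weyl bound above yields the standard estimate $\|B^{\dagger}\|_2\le \|M^{\dagger}\|_2/(1-\|M^{\dagger}\|_2\|\D M\|_2)$, which stays bounded as $\D M\to{\bf 0}$. I would then invoke the classical perturbation identity for the M-P inverse \cite{stewart1990matrix}, which expresses $B^{\dagger}-M^{\dagger}$ as a sum of terms, each carrying exactly one factor of $\D M$ together with factors of $M^{\dagger}$, $B^{\dagger}$ and the projectors $\E_M,\F_M$ (a rank-preserving, equivalently acute, perturbation). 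Taking spectral norms and using the boundedness of $\|B^{\dagger}\|_2$ gives $\|B^{\dagger}-M^{\dagger}\|_2\le C\,\|\D M\|_2$ for a constant $C$ depending only on $\|M^{\dagger}\|_2$ and $\|\D M\|_2$ (bounded near $0$); letting $\D M\to{\bf 0}$ forces $B^{\dagger}\to M^{\dagger}$.

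For the converse ``continuity $\Rightarrow$ rank preserved'', I would argue by contrapositive using the dichotomy above: if rank preservation fails on $\S^1(M)$, there is a sequence $\D M_k\to{\bf 0}$ in $\S^1(M)$ with $\rank(M+\D M_k)=r'>r$. Put $B_k=M+\D M_k\to M$. By continuity of singular values, $\sigma_{r'}(B_k)\to\sigma_{r'}(M)=0$, since $M$ has only $r<r'$ nonzero singular values; but $\|B_k^{\dagger}\|_2=1/\sigma_{r'}(B_k)\to\infty$. This contradicts convergence of $B_k^{\dagger}$ to the finite matrix $M^{\dagger}$ (which would force $\|B_k^{\dagger}\|_2\to\|M^{\dagger}\|_2<\infty$), so the limit cannot equal $M^{\dagger}$.

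The main obstacle is the forward direction: one must produce a genuinely quantitative bound $\|B^{\dagger}-M^{\dagger}\|_2=O(\|\D M\|_2)$ rather than a mere qualitative limit, and for this the explicit Wedin-type decomposition of $B^{\dagger}-M^{\dagger}$ together with the uniform control of $\|B^{\dagger}\|_2$ is essential. It is precisely the rank-preservation hypothesis (equivalently, acuteness) that keeps $\|B^{\dagger}\|_2$ from blowing up and thereby makes these bounds valid. The converse is comparatively soft, resting only on the continuity of singular values and the blow-up of $\|B^{\dagger}\|_2$ when a previously zero singular value is perturbed into a small nonzero one.
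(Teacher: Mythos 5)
Your proof is correct, and it takes a genuinely different, more self-contained route than the paper's. The paper's argument is essentially two citations: a lemma of Li and Wei asserting that, under $\|M^{\dagger}\|_2\|\D M\|_2<1$, the perturbation $M+\D M$ is acute if and only if $\rank(M+\D M)=\rank(M)$, followed by the Stewart--Sun fact that $M^{\dagger}$ is continuous on the set of acute perturbations; the equivalence is obtained by chaining these. You instead prove everything from first principles: Weyl's inequality to show the rank cannot drop on $\S^1(M)$, the bound $\|(M+\D M)^{\dagger}\|_2\le\|M^{\dagger}\|_2/(1-\|M^{\dagger}\|_2\|\D M\|_2)$ together with the Wedin decomposition to obtain a quantitative estimate $\|(M+\D M)^{\dagger}-M^{\dagger}\|_2=\mathcal{O}(\|\D M\|_2)$ for the forward implication, and the blow-up $\|(M+\D M_k)^{\dagger}\|_2\ge 1/\|\D M_k\|_2\to\infty$ under a rank jump for the converse. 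Your route buys an explicit Lipschitz-type error bound rather than a bare limit, and it makes the ``only if'' direction transparent, whereas the paper leaves that direction implicit inside the citation to acute-perturbation theory; the paper's route buys brevity. One presentational nit: the Wedin identity for $B^{\dagger}-M^{\dagger}$ holds for \emph{arbitrary} perturbations, not only acute ones --- rank preservation (equivalently acuteness within $\S^1(M)$) is needed only to keep $\|B^{\dagger}\|_2$ bounded so that the identity yields a useful estimate. Your parenthetical slightly conflates these two points, but the argument you actually run is the correct one.
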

\renewcommand\qedsymbol{$\blacksquare$}
\begin{proof}
	For $\D M\in \S^1(M),$  we have $\|M^{\dagger}\|_2\| \D M\|_2< 1.$ Then $M+\D M$ is an acute perturbation of $M$ if and only if $\rank(M+\D M)=\rank(M)$ \cite[Lemma 1]{li2013note}. Since on the set of acute perturbations of $M,$ its  M-P inverse $M^{\dagger}$ is a continuous function about $M$ \cite[Page~$140$]{ stewart1990matrix}. Therefore, it follows that  $M^{\dagger}$ is continuous on the set $S^1(M)$ if and only if $\rank(M+\D M)=\rank(M).$ Hence, the proof is completed.
\end{proof}
\begin{remark}\label{re21}
	When $M$ has full column rank (or row rank),  from Lemma \ref{lm21}, the rank condition 
	in Proposition \ref{prop21} holds trivially.
\end{remark}
\section{MCN and CCN for  general parameterized  matrices}\label{s3}
In this part, initially, we define structured MCN and CCN for the M-P inverse and the unique MNLS solution for a general class of parameterized matrices. Suppose that each entry of $M\in \R^{m\times n}$ is a differentiable function of a set of real parameters $\Psi=[\psi_1,\psi_2,\ldots, \psi_p]^{\top}\in \mathbb{R}^p$ and write the matrix as $M(\Psi)$. We employ this notation for the rest of the paper. Due to the fact that a number of important classes of matrices can be parameterized by a collection of parameters, it is reasonable to consider perturbations on the parameters rather than directly on their entries. Let $\D \Psi\in \R^p$ be the perturbation on the parameters set $\Psi\in \R^p,$ we consider the  admissible perturbation in the matrix $M(\Psi)$ as $M(\Psi +\D \Psi)-M(\Psi)=\D M(\Psi)$.  For maintaining the continuity property for $M^{\dagger}(\Psi),$ according to the Proposition \ref{prop21}, we restrict the perturbation on $\Psi$ to the following set 
\begin{align}\label{eq31}
	\mathcal{S}(\Psi):=\Bigl\{\Delta \Psi \in \mathbb{R}^{p}:  \, \rank(M(\Psi))&=\rank(M(\Psi+\Delta \Psi)) =r, \, \|M^{\dagger}(\Psi)\|_2\| \D M(\Psi)\|_2 < 1\Bigr\}.
\end{align}
Next, we provide few examples to show that $\S(\Psi)$ is nonempty.
\begin{example}
    In this example, we consider a symmetric matrix $M(\Psi)=[m_{ij}]\in \R^{3\times 3},$ where $\Psi=\left[m_{11}=1, m_{21}=2, m_{31}=3, m_{22}=4, m_{32}=6, m_{33}=1\right]^{\top} \in \R^{6}.$ Then \begin{equation*}
        M(\Psi)=\bmatrix{1& 2& 3\\ 2 &4 &6\\ 3& 6 &1}.
    \end{equation*} 
    If we take $\Delta \Psi=\left[\Delta m_{11}=0, \Delta m_{21}=0, \Delta m_{31}=\mu, \Delta m_{22}=0,\Delta  m_{32}=2\mu, \Delta m_{33}=0\right]^{\top} \in \R^{6},$ where $\mu\neq -8/3$. Then, we get \begin{equation*}
          M(\Psi+\Delta \Psi )=\bmatrix{1& 2& 3+\mu\\ 2 &4 &6+2\mu\\ 3+\mu& 6+2\mu &1}.
    \end{equation*} 
    Here, $\|M^{\dagger}(\Psi)\|_2= 0.25$ 
 and $\| \Delta M^{\dagger}(\Psi)\|_2=\sqrt{3}|\mu|.$ Clearly, $\rank(M(\Psi))=\rank(M(\Psi+\Delta \Psi))$ and $\|M^{\dagger}(\Psi)\|_2\|\Delta M(\Psi)\|_2< 1, $ whenever $|\mu|< 2.3094.$ Therefore, the set $\S(\Psi)$ includes all perturbations $\Delta \Psi$ such that $|\mu|< 2.3094$.
\end{example}
\begin{example}
  Consider the parameter set $\Psi=[\{2,4\}, \{1\}, \{-3,1\}, \{5,2,6\}, \{1,3\}, \{2\}, \{3,1\}]^{\top} \in \R^{13}$ of a $\{1,1\}$-QS matrix given as in \eqref{eq51} and  using the formula provided in Definition \ref{def51}, we have  \begin{equation}
      M(\Psi)=\bmatrix{5 & 3 & 2 \\ -6& 2& 3\\ -12 & 4 &6}.
  \end{equation}
  Taking 
      $\Delta \Psi=[\{0,0\}, \{0\}, \{0,0\}, \{\mu,0,0\}, \{\mu,0\}, \{0\}, \{0,0\}]^{\top} \in \R^{13},$ 
 we get  \begin{equation*}
      M(\Psi +\Delta \Psi)=\bmatrix{5+\mu & 3+3\mu & 2+2\mu \\ -6& 2& 3\\ -12 & 4 &6}.
  \end{equation*}
  Here, $\|M^{\dagger}(\Psi)\|_2=0.1812$ 
 and $\| \Delta M^{\dagger}(\Psi)\|_2=\sqrt{14}|\mu|. $ Clearly, $\rank(M(\Psi))=\rank(M(\Psi+\Delta \Psi))$ and $\|M^{\dagger}(\Psi)\|_2\|\Delta M(\Psi)\|_2< 1, $ whenever $|\mu|< 1.4747.$ Thus, $\S(\Psi)$ contains all perturbations $\Delta \Psi$ such that $|\mu|< 1.4747.$
\end{example}
\begin{example}
    Consider the parameter set $\Psi=[\{1,2,3\}, \{4,5\}]^{\top} \in \R^{5}$ of a CV matrix and using the formula provided in Definition \ref{def:CV}, we have  \begin{equation*}
      M(\Psi)=\bmatrix{-\frac{1}{3} & -\frac{1}{4}& 1& 1 \\ -\frac{1}{2}&-\frac{1}{3}& 1&2\\ -1 & -\frac{1}{2}&1 &6}.
  \end{equation*}
  Considering $\Delta \Psi=[\{0,10^{-5},0\}, \{0,0\}]^{\top} \in \R^{5},$ we have 
  \begin{equation*}
      M(\Psi+\Delta \Psi)=\bmatrix{-\frac{1}{3} & -\frac{1}{4}& 1& 1 \\[0.5ex] -\frac{1}{2-10^{-5}}&-\frac{1}{3-10^{-5}}& 1&2+10^{-5}\\[0.5ex] -1 & -\frac{1}{2}&1 &6}.
  \end{equation*}
  Here, $\rank(M(\Psi))=\rank(M(\Psi+\Delta \Psi))=3$ and  we obtain $\|M^{\dagger}(\Psi)\|_2=7.5690
$ and $\|\Delta M(\Psi)\|_2= 1.0367\times 10^{-5}.
$ Consequently, $\|M^{\dagger}(\Psi)\|_2\|\Delta M(\Psi)\|_2=  7.8472\times 10^{-5}< 1, $ which shows that $\Delta \Psi\in \S(\Psi).$
\end{example}

\subsection{M-P inverse of  general parameterized matrices }
We introduce structured MCN and CCN in Definition \ref{def31} for M-P inverse of a general class of parameterized matrices. We provide general expressions for the upper bounds of these CNs in Theorem \ref{Th31}. Also, we present exact formulae for these CNs in Theorem \ref{Th32} for full
column rank matrices.

\begin{definition}\label{def31}
	Let $M(\Psi)\in \mathbb{R}^{m\times n},$  $\rank(M(\Psi))=r\leq \min{\{m,n\}}$. Then, we define structured MCN and CCN for $M^{\dagger}(\Psi)$ as follows:
	\begin{eqnarray*}
\mathscr{M}^{\dagger}\big(M(\Psi)\big)&:=&\lim_{\ep\rightarrow 0}\sup\left\{\frac{\|{M^{\dagger}(\Psi+\Delta \Psi)-M^{\dagger}(\Psi)}\|_{\max}}{\ep \|M^{\dagger}\|_{\max}} :\normx{\Delta \Psi /\Psi}_{\infty}\leq \ep, \Delta \Psi\in \mathcal{S}(\Psi)\right\},\\
\mathscr{C}^{\dagger}\big(M(\Psi)\big)&:=&\lim_{\ep\rightarrow 0}\sup\left\{\frac{1}{\ep}\normx{\frac{M^{\dagger}(\Psi+\Delta \Psi)-M^{\dagger}(\Psi)}{ M^{\dagger}}}_{\max}:\normx{\Delta \Psi /\Psi}_{\infty}\leq \ep, \Delta \Psi \in \mathcal{S}(\Psi)\right\}.
	\end{eqnarray*}
\end{definition}
To formulate the general expressions for the upper bounds of the CNs outlined in Definition $\ref{def31}$, we present the following perturbation expression for $M^{\dagger}(\Psi)$.

\begin{lemma}\label{lm31}
	Let $M(\Psi)\in \mathbb{R}^{m\times n}$ and $\rank(M(\Psi))=r$. Suppose $\Delta \Psi \in \mathcal{S}(\Psi)$ is the perturbation on the parameter set $\Psi.$ Then 
	\begin{align*}
		M^{\dagger}(\Psi +\Delta \Psi)-M^{\dagger}(\Psi)
		& =\sum _{k=1}^{p}\Bigl(-M^{\dagger}\frac{\partial M(\Psi)}{\partial \psi_k}M^{\dagger}+M^{\dagger}{M^{\dagger}}^{\top}\Bigl(\frac{\partial M(\Psi)}{\partial \psi_k}\Bigr)^{\top}\E_M\\
		&+\F_M\Bigl(\frac{\partial M(\Psi)}{\partial \psi_k}\Bigr)^{\top} {M^{\dagger}}^{\top}M^{\dagger}\Bigr)\Delta \psi_k +\mathcal{O}(\|\D\Psi\|_{\infty}^2).
	\end{align*}
	
\end{lemma}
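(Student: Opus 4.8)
The plan is to combine the classical first-order perturbation expansion of the Moore--Penrose inverse under rank-preserving perturbations with a Taylor expansion of the matrix entries in the parameters. Write $\tilde M := M(\Psi+\D\Psi)$ and $\D M := \tilde M - M(\Psi)$. The hypothesis $\D\Psi\in\S(\Psi)$ guarantees both $\rank(\tilde M)=\rank(M(\Psi))=r$ and $\|M^\dagger\|_2\|\D M\|_2<1$, so by Proposition \ref{prop21} the map $\Psi\mapsto M^\dagger(\Psi)$ is continuous at $\Psi$; in particular $\tilde M^\dagger = M^\dagger + \mathcal{O}(\|\D M\|_2)$ and $\F_{\tilde M} = \F_M + \mathcal{O}(\|\D M\|_2)$ as $\D M\to{\bf 0}$. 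This continuity is exactly what the rank-preserving constraint buys us, and it is indispensable: as observed after Lemma \ref{lm21}, without it $M^\dagger$ may jump and no first-order expansion can hold.

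First I would record the exact Wedin-type identity valid for acute (rank-preserving) perturbations,
\begin{align*}
\tilde M^\dagger - M^\dagger = -\tilde M^\dagger\, \D M\, M^\dagger + \tilde M^\dagger {\tilde M^\dagger}^{\top} (\D M)^{\top}\E_M + \F_{\tilde M}(\D M)^{\top}{M^\dagger}^{\top}M^\dagger,
\end{align*}
which is established by repeatedly applying the four defining M-P equations together with the orthogonal decompositions $I_m = MM^\dagger + \E_M$ and $I_n = M^\dagger M + \F_M$. Because the rank is preserved, the terms arising from range/null-space mismatch that would otherwise obstruct the identity are controlled.

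Next I would linearize. Each summand on the right already carries one factor of $\D M$, so replacing $\tilde M^\dagger$ by $M^\dagger$ and $\F_{\tilde M}$ by $\F_M$ via the continuity estimates above perturbs the expression only at order $\mathcal{O}(\|\D M\|_2^2)$, giving
\begin{align*}
M^\dagger(\Psi+\D\Psi)-M^\dagger(\Psi) = -M^\dagger\D M\, M^\dagger + M^\dagger {M^\dagger}^{\top}(\D M)^{\top}\E_M + \F_M (\D M)^{\top}{M^\dagger}^{\top}M^\dagger + \mathcal{O}(\|\D M\|_2^2).
\end{align*}
Finally, since every entry of $M$ is a differentiable function of $\Psi$, a first-order Taylor expansion yields $\D M = \sum_{k=1}^p \frac{\partial M(\Psi)}{\partial\psi_k}\,\D\psi_k + \mathcal{O}(\|\D\Psi\|_\infty^2)$, and the estimate $\|\D M\|_2 = \mathcal{O}(\|\D\Psi\|_\infty)$ lets me absorb both remainder terms into a single $\mathcal{O}(\|\D\Psi\|_\infty^2)$. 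Substituting this expansion and exploiting the linearity of each of the three terms in $\D M$ distributes the sum over $k$ and produces the claimed formula.

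The main obstacle is the first step: proving the rank-preserving perturbation identity rigorously and, above all, verifying that the substitutions of $\tilde M^\dagger$ by $M^\dagger$ and of $\F_{\tilde M}$ by $\F_M$ are legitimate at first order. This is precisely where the acute-perturbation hypothesis encoded in $\S(\Psi)$ enters, and the delicate bookkeeping lies in confirming that every correction term is genuinely $\mathcal{O}(\|\D M\|_2^2)$ rather than $\mathcal{O}(\|\D M\|_2)$; the subsequent parameter Taylor expansion and reindexing are then routine.
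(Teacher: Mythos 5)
Your proposal is correct and follows essentially the same route as the paper: the paper simply cites the first-order perturbation expression of the Moore--Penrose inverse for acute perturbations from Stewart and Sun and combines it with the Taylor expansion of $\Delta M(\Psi)$ in the parameters, exactly as you do, while you additionally sketch how that expansion follows from the exact Wedin-type identity. One small remark: the substitution of $\tilde M^{\dagger}$ by $M^{\dagger}$ at first order requires the quantitative estimate $\|\tilde M^{\dagger}-M^{\dagger}\|_2=\mathcal{O}(\|\Delta M\|_2)$ (which follows from $\|\tilde M^{\dagger}\|_2\leq \|M^{\dagger}\|_2/(1-\|M^{\dagger}\|_2\|\Delta M\|_2)$ under the acuteness hypothesis), not merely the continuity furnished by Proposition \ref{prop21}.
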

\renewcommand\qedsymbol{$\blacksquare$}
\begin{proof}
	Given that the elements of $M(\Psi)$ are differentiable functions of $\Psi =[\psi_1, \psi_2,...,\psi_p]^{\top},$
	using matrix differentiation, for an infinitesimal change in $M(\Psi),$ we get
	\begin{equation}\label{Eq32}
		\Delta M(\Psi)=M(\Psi +\D \Psi)-M(\Psi)=\sum_{k=1}^{p}\frac{\partial M(\Psi)}{\partial \psi_k}\Delta \psi_k + \, \mathcal{O}(\|\Delta \Psi\|_{\infty}^2),
	\end{equation}
	where $\D \Psi=[\D\psi_1,\ldots, \D\psi_p]^{\top}.$
	Since $\D \Psi\in \S(\Psi),$ using the perturbation expression  for the M-P inverse \cite{stewart1990matrix}, we obtain
	\begin{align}\label{eqn33}
		\nonumber	\Delta M^{\dagger}(\Psi)=M^{\dagger}(\Psi+\Delta \Psi)-&M^{\dagger}(\Psi)=-M^{\dagger}\Delta M(\Psi)M^{\dagger}+M^{\dagger}{M^{\dagger}}^{\top}(\D M(\Psi))^{\top}\E_M\\
		&+\F_M (\D M(\Psi))^{\top} {M^{\dagger}}^{\top}M^{\dagger}+\, \mathcal{O}(\|\D \Psi\|_{\infty}^2).
	\end{align}
	Putting $(\ref{Eq32})$ in $(\ref{eqn33}),$ we get
	\begin{align*}
		\Delta M^{\dagger}(\Psi)=-M^{\dagger}&\Big(\sum_{k=1}^{p}\frac{\pr M(\Psi)}{\pr \psi_k}\D \psi_k \Big) M^{\dagger}+  M^{\dagger}{M^{\dagger}}^{\top}\Big(\sum_{k=1}^{p}\frac{\pr M(\Psi)}{\pr \psi_k}\D \psi_k\Big)^{\top} \E_M\\
		&+\F_M \Big(\sum_{k=1}^{p}\frac{\pr M(\Psi)}{\pr \psi_k}\D \psi_k\Big)^{\top} {M^{\dagger}}^{\top} M^{\dagger} +\, \mathcal{O}(\|\D \Psi\|_{\infty}^2).
	\end{align*}
	Hence, the desired expression is obtained. 
\end{proof}

In Theorem $\ref{Th31}$, for $M^{\dagger}(\Psi)$, we derive general expressions for upper bounds of the proposed CNs when $\rank(M(\Psi))=r.$
\begin{theorem}\label{Th31}
	For $M(\Psi)\in \mathbb{R}^{m\times n}$ with $\rank(M(\Psi))=r,$ we have
	\begin{align*}
		\M^{\dagger}\big(M(\Psi)\big)\leq \frac{ \| \mathcal{X}^{\dagger}_{\Psi}\|_{\max}}{\|M^{\dagger}\|_{\max}}  := \tilde{\M}^{\dagger}\big(M(\Psi)\big)\quad \mbox{and} \quad
		\C^{\dagger}\big(M(\Psi)\big)\leq  \Big \|\frac{\mathcal{X}^{\dagger}_{\Psi}}{M^{\dagger}} \Big\|_{\max}
		:= \tilde{\C}^{\dagger}\big(M(\Psi)\big),
	\end{align*}
 where \begin{align*}
       \mathcal{X}^{\dagger}_{\Psi}=\sum_{k=1}^{p}\Big( \Bigl \vert M^{\dagger} \frac{\pr M(\Psi)}{\pr \psi_k}M^{\dagger}\Bigr \vert + \vr M^{\dagger}{M^{\dagger}} ^{\top}\Bigl(\frac{\pr M(\Psi)}{\pr \psi_k}\Bigr)^{\top}\E _M\vr 
		+ \Bigl \vert\F_M \Bigl(\frac{\pr M(\Psi)}{\pr \psi_k}\Bigr)^{\top} {M^{\dagger}}^{\top} M^{\dagger}\Bigr\vert \Big) \vert \psi_k\vert.
 \end{align*}
\end{theorem}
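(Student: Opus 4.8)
The plan is to derive both bounds straight from the first-order expansion supplied by Lemma~\ref{lm31}, converting it into an entrywise inequality and then invoking the monotonicity of $\normx{\cdot}_{\max}$ under the entrywise order $|A|\le|B|$. First I would take the expression of Lemma~\ref{lm31} for $M^{\dagger}(\Psi+\D\Psi)-M^{\dagger}(\Psi)$ and apply the entrywise triangle inequality $|A+B|\le|A|+|B|$ termwise over the sum in $k$ and over the three matrix blocks, factoring the scalar $|\D\psi_k|$ out of each block. Next, the constraint $\D\Psi\in\S(\Psi)$ together with $\normx{\D\Psi/\Psi}_{\infty}\le\ep$ gives, by the definition of the entrywise quotient, $|\D\psi_k|\le\ep|\psi_k|$ for every $k$. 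Substituting this bound collapses the sum into $\ep\,\mathcal{X}^{\dagger}_{\Psi}$, yielding the clean entrywise estimate
\begin{equation*}
\bigl|M^{\dagger}(\Psi+\D\Psi)-M^{\dagger}(\Psi)\bigr|\le\ep\,\mathcal{X}^{\dagger}_{\Psi}+\mathcal{O}(\ep^2),
\end{equation*}
where the higher-order remainder is controlled using $\normx{\D\Psi}_{\infty}\le\ep\normx{\Psi}_{\infty}$ on the constraint set.

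With this master inequality in hand, the two bounds follow by applying the appropriate norm. For the MCN, monotonicity of $\normx{\cdot}_{\max}$ gives $\normx{M^{\dagger}(\Psi+\D\Psi)-M^{\dagger}(\Psi)}_{\max}\le\ep\normx{\mathcal{X}^{\dagger}_{\Psi}}_{\max}+\mathcal{O}(\ep^2)$; dividing by $\ep\normx{M^{\dagger}}_{\max}$, taking the supremum over admissible $\D\Psi$ (the right-hand side being uniform in the perturbation direction), and letting $\ep\to0$ absorbs the $\mathcal{O}(\ep^2)/\ep=\mathcal{O}(\ep)$ term and delivers $\tilde{\M}^{\dagger}\big(M(\Psi)\big)$. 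For the CCN I would first divide the master inequality entrywise by $M^{\dagger}$: since $|(A/M^{\dagger})_{ij}|=|(M^{\dagger})_{ij}^{\ddagger}|\,|A_{ij}|$ and the scalar factor $|(M^{\dagger})_{ij}^{\ddagger}|$ is nonnegative, the entrywise inequality is preserved, giving $\bigl|(M^{\dagger}(\Psi+\D\Psi)-M^{\dagger}(\Psi))/M^{\dagger}\bigr|\le\ep\,|\mathcal{X}^{\dagger}_{\Psi}/M^{\dagger}|+\mathcal{O}(\ep^2)$; applying $\normx{\cdot}_{\max}$ and repeating the limiting argument yields $\tilde{\C}^{\dagger}\big(M(\Psi)\big)$.

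Because Lemma~\ref{lm31} already provides the exact first-order expansion, no further perturbation analysis of the M-P inverse is required, so this is essentially a monotonicity-plus-limit computation rather than a deep argument. The delicate points are purely in the bookkeeping: I must check that the $\mathcal{O}(\normx{\D\Psi}_{\infty}^2)$ remainder is uniform over directions so that it truly contributes $\mathcal{O}(\ep)$ after normalization, and I must track indices with $\psi_k=0$ or vanishing entries of $M^{\dagger}$, where the conventions $a^{\ddagger}=1$ and the vanishing of the factor $|\psi_k|$ must be handled consistently to keep every inequality valid. I expect the uniformity of the second-order remainder to be the only step needing genuine care.
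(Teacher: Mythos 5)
Your proposal is correct and follows essentially the same route as the paper's own proof: apply the entrywise triangle inequality to the expansion of Lemma~\ref{lm31}, use $\normx{\D\Psi/\Psi}_{\infty}\le\ep$ to replace each $|\D\psi_k|$ by $\ep|\psi_k|$, pass to the $\max$ norm, and let $\ep\to 0$; the CCN case is handled identically after the entrywise division by $M^{\dagger}$. Your added remarks on the uniformity of the $\mathcal{O}(\ep^2)$ remainder and the $a^{\ddagger}$ convention are reasonable care points but do not change the argument.
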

\renewcommand\qedsymbol{$\blacksquare$}
 \begin{proof} 
From Lemma \ref{lm31} and using the properties of absolute values, we have
	\begin{align*}
		\Bigl \vert M^{\dagger}(\Psi+\Delta \Psi)-M^{\dagger}(\Psi)\Bigr \vert  \nonumber\leq &\sum _{k=1}^{p}\Big(\Bigl \vert M^{\dagger}\frac{\partial M(\Psi)}{\partial \psi_k}M^{\dagger}\Bigl \vert +\Bigl \vert M^{\dagger}{M^{\dagger}}^{\top}\Bigl(\frac{\partial M(\Psi)}{\partial \psi_k}\Bigr)^{\top}\E_M \Bigr \vert  \\
		&+\Bigl \vert \F_M \Bigl(\frac{\pr M(\Psi)}{\pr \psi_k}\Bigr) ^{\top} {M^{\dagger}}^{\top}M^{\dagger}\vr \Big) \vert \Delta \psi_k| \, +\mathcal{O}(\|\D \Psi\|_{\infty}^2).
	\end{align*}
	Now, by Definition \ref{def31}, $\Vert \Delta \Psi/\Psi \Vert_{\infty}\leq \ep $ implies that $\vert \D \psi_k\vert \leq \ep \, |\psi_k|$ for all $k=1,2,\ldots, p,$ and using the properties of the $\max$ norm, we find that
	\begin{equation*}
		\begin{split}
			\|M^{\dagger}(\Psi+\Delta \Psi)-M^{\dagger}(\Psi)\|_{\max}
			\leq &\, \ep\,  \Bigl\Vert \sum _{k=1}^{p}\Big( \vr M^{\dagger}\frac{\partial M(\Psi)}{\partial \psi_k}M^{\dagger}\Bigl \vert +\Bigl \vert M^{\dagger}{M^{\dagger}}^{\top}\Bigl(\frac{\partial M(\Psi)}{\partial \psi_k}\Bigr)^{\top}\E _M\Bigr \vert \\
			&+ \Bigl \vert \F_M \Bigl(\frac{\pr M(\Psi)}{\pr \psi_k}\Bigr) ^{\top} {M^{\dagger}}^{\top}M^{\dagger} \vr \Big) | \psi_k|\Bigr \Vert_{\max}+\mathcal{O}(\ep^2).
		\end{split}
	\end{equation*} 
	Then, if we take $\ep \rightarrow 0$, and from Definition \ref{def31}, we get the desired result of the first claim.\\
	In a similar manner, we obtain the second part of the claim. 
\end{proof}

In the next corollary, we obtain bounds for the CNs for $M^{\dagger}$ in the unstructured case.
\begin{corollary}\label{coro31}
	Suppose $M\in \R^{m\times n}$ with $\rank(M)=r.$ Then 
	\begin{eqnarray*}
		\tilde{\M}^{\dagger}(M)&=&\frac{1}{\|M^{\dagger}\|_{\max}}\Vr \vert M^{\dagger} \vert \vert M \vert \vert M^{\dagger}\vert + |M^{\dagger}{M^{\dagger}} ^{\top}| |M^{\top}| |\E_M|+|\F_M| |M^{\top}| |{M^{\dagger}} ^{\top}M^{\dagger}|\Vr_{\max},\\
		\tilde{\C}^{\dagger}(M)&=&\Vr \frac{1}{M^{\dagger}}\Bigl(\vert M^{\dagger} \vert \vert M \vert \vert M^{\dagger}\vert + |M^{\dagger}{M^{\dagger}} ^{\top}| |M^{\top}| |\E_M |+|\F_M | |M^{\top}| |{M^{\dagger}} ^{\top}M^{\dagger}|\Bigr)\Vr_{\max}.
	\end{eqnarray*}
\end{corollary}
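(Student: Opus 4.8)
The plan is to read off Corollary~\ref{coro31} as the \emph{unstructured} specialization of Theorem~\ref{Th31}, in which every entry of $M$ is treated as its own independent real parameter. Concretely, I would take $p=mn$ and identify the parameter vector $\Psi$ with the list of entries $\{m_{ij}\}$, so that the relevant partial derivatives are exactly the rank-one matrices $\frac{\pr M}{\pr m_{ij}}=E_{ij}^{mn}=e_i^m(e_j^n)^{\top}$. With this identification the quantity $\mathcal{X}^{\dagger}_{\Psi}$ from Theorem~\ref{Th31} becomes the double sum
\begin{align*}
\mathcal{X}^{\dagger}=\sum_{i=1}^{m}\sum_{j=1}^{n}\Big(\vr M^{\dagger}E_{ij}^{mn}M^{\dagger}\vr+\vr M^{\dagger}{M^{\dagger}}^{\top}(E_{ij}^{mn})^{\top}\E_M\vr+\vr\F_M(E_{ij}^{mn})^{\top}{M^{\dagger}}^{\top}M^{\dagger}\vr\Big)|m_{ij}|,
\end{align*}
and the whole task reduces to evaluating this sum in closed form and inserting it into the two bounds of Theorem~\ref{Th31}.

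The engine of the computation is a componentwise outer-product identity. Since $E_{ij}^{mn}=e_i^m(e_j^n)^{\top}$, each summand is rank one: for conformable $A,B$ one has $A\,e_i^m(e_j^n)^{\top}B=(Ae_i^m)\big((e_j^n)^{\top}B\big)$, whose $(s,t)$ entry is $A_{si}B_{jt}$, and hence $|A\,e_i^m(e_j^n)^{\top}B|=|Ae_i^m|\,|(e_j^n)^{\top}B|$ entrywise. Summing against $|m_{ij}|$ collapses the double sum into a triple matrix product. For the first term this gives $\sum_{i,j}|M^{\dagger}E_{ij}^{mn}M^{\dagger}|\,|m_{ij}|=|M^{\dagger}|\,|M|\,|M^{\dagger}|$, because the $(s,t)$ entry equals $\sum_{i,j}|(M^{\dagger})_{si}|\,|m_{ij}|\,|(M^{\dagger})_{jt}|=(|M^{\dagger}|\,|M|\,|M^{\dagger}|)_{st}$.

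For the remaining two terms I would use $(E_{ij}^{mn})^{\top}=e_j^n(e_i^m)^{\top}$, which interchanges the roles of the two summation indices. Writing $A$ and $B$ for the left and right factors, the $(s,t)$ entry of $A\,e_j^n(e_i^m)^{\top}B$ is $A_{sj}B_{it}$, so the weighted sum is $\sum_{i,j}|A_{sj}|\,|m_{ij}|\,|B_{it}|$; grouping the inner sum over $i$ as $\sum_i|M^{\top}|_{ji}|B|_{it}$ and using $|M|^{\top}=|M^{\top}|$ yields the entrywise identity $\sum_{i,j}|A_{sj}|\,|m_{ij}|\,|B_{it}|=(|A|\,|M^{\top}|\,|B|)_{st}$. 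Applied with $(A,B)=(M^{\dagger}{M^{\dagger}}^{\top},\E_M)$ and $(A,B)=(\F_M,{M^{\dagger}}^{\top}M^{\dagger})$, this produces $|M^{\dagger}{M^{\dagger}}^{\top}|\,|M^{\top}|\,|\E_M|$ and $|\F_M|\,|M^{\top}|\,|{M^{\dagger}}^{\top}M^{\dagger}|$, respectively. Adding the three contributions gives the bracketed matrix in the statement as the closed form of $\mathcal{X}^{\dagger}$, and substituting it into $\tilde{\M}^{\dagger}(M)=\|\mathcal{X}^{\dagger}\|_{\max}/\|M^{\dagger}\|_{\max}$ and $\tilde{\C}^{\dagger}(M)=\|\mathcal{X}^{\dagger}/M^{\dagger}\|_{\max}$ reproduces the two displayed equalities verbatim.

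The main obstacle is the index bookkeeping in the second and third terms. Because the entrywise absolute value does not commute with matrix multiplication in general, the reduction succeeds only because each summand is rank one, and one must track carefully that the transpose $(E_{ij}^{mn})^{\top}$ swaps the indices so that the weight $|m_{ij}|$ lands between $|M^{\top}|$ (rather than $|M|$) and its neighboring factors; verifying the entrywise identity $\sum_{i,j}|A_{sj}|\,|m_{ij}|\,|B_{it}|=(|A|\,|M^{\top}|\,|B|)_{st}$ for these two terms is the only genuinely non-routine point, while everything else is direct substitution.
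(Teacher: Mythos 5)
Your proposal is correct and follows essentially the same route as the paper: both treat the $mn$ entries of $M$ as the parameter set, use $\frac{\pr M}{\pr m_{ij}}=e_i^m(e_j^n)^{\top}$ together with the rank-one identity $|ab^{\top}|=|a||b^{\top}|$, and collapse the resulting double sum into the three triple products before substituting into Theorem~\ref{Th31}. Your explicit entrywise verification of $\sum_{i,j}|A_{sj}|\,|m_{ij}|\,|B_{it}|=(|A|\,|M^{\top}|\,|B|)_{st}$ for the transposed terms is exactly the index bookkeeping the paper carries out in its displayed computation.
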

\renewcommand\qedsymbol{$\blacksquare$}
\begin{proof}
	For any $M{=[m_{ij}]}\in \R^{m\times n}$ and any two column vectors $a$ and $b,$ we have
	\begin{eqnarray}\label{eqn34}
		\frac{\pr M}{\pr m_{ij}}&=&e_i^{m}(e_j^n)^{\top} \quad \text{and} \\ \label{Eq35}
	    |ab^{\top}|&=&|a||b^{\top}|.
	\end{eqnarray}
 	By considering the parameters as the entries of $M$ itself, i.e., $\Psi=[\{m_{ij}\}_{i,j=1}^{m,n}]^{\top}\in \R^{mn},$ and using $(\ref{eqn34}),$  the sum expression in Theorem \ref{Th31} can be written  as:		
	\begin{align}\label{eqn35}
		\nonumber	&\sum_{i=1}^{m}\sum_{j=1}^n\Big( \Bigl \vert M^{\dagger} \frac{\pr M}{\pr m_{ij}}M^{\dagger}\Bigr \vert + \vr M^{\dagger}{M^{\dagger}} ^{\top}\Bigl(\frac{\pr M}{\pr m_{ij}}\Bigr)^{\top}\E_M\vr  
		+\Bigl \vert\F_M \Bigl(\frac{\pr M}{\pr m_{ij}}\Bigr)^{\top} {M^{\dagger}}^{\top} M^{\dagger}\Bigr\vert \Big) \vert m_{ij}\vert \\ 
		&=\sum_{i=1}^{m}\sum_{j=1}^n\Bigl( \Bigl \vert M^{\dagger} e_i^{m}(e_j^{n})^{\top}M^{\dagger}\Bigr \vert + \vr M^{\dagger}{M^{\dagger}} ^{\top} e_j^{n}(e_i^{m})^{\top}  \E_M \vert  
		+\Bigl \vert \F_M  e_j^{n}(e_i^{m})^{\top} {M^{\dagger}}^{\top} M^{\dagger}\Bigr\vert \Bigr) \vert m_{ij} \vert.
	\end{align}
	Again, using $(\ref{Eq35}),$ we can write $(\ref{eqn35})$ as
	\begin{align}\label{eqn36}
		\nonumber& \sum_{i=1}^{m}\sum_{j=1}^n\Big( \vert M^{\dagger}(:,i) | \vert m_{ij} \vert |M^{\dagger}(j,:)\vert  + \vert M^{\dagger}{M^{\dagger}} ^{\top}(:,j) | \vert m_{ij} \vert |\E_M(i,:)\vert 
		\\ \nonumber
		&\quad \quad+ \vert\F_M(:,j)| \vert m_{ij} \vert | {M^{\dagger}}^{\top} M^{\dagger}(i,:)\vert  \Big) \\
		&=\vert M^{\dagger} \vert \vert M \vert \vert M^{\dagger}\vert + |M^{\dagger}{M^{\dagger}} ^{\top}| |M^{\top}| |\E_M |+|\F_M| |M^{\top}| |{M^{\dagger}} ^{\top}M^{\dagger}|.
	\end{align}
	The desired upper bounds will be obtained by substituting $(\ref{eqn36})$ in Theorem \ref{Th31}. 
\end{proof}

Next, we estimate the bounds for CNs under the constraints $\mathcal{R}(\D M(\Psi))\subseteq\mathcal{R}(M(\Psi))$ and $\mathcal{R}(\D M^{\top}(\Psi))\subseteq\mathcal{R}(M^{\top}(\Psi)).$
\begin{proposition}\label{pro31}
	Let $M(\Psi)\in \R^{m\times n}$ be such that $\rank(M(\Psi))=r.$ Suppose that $\D \Psi\in \R^{p}$ is the perturbation on the parameter set $\Psi$ satisfying the conditions, $\|M^{\dagger}\|_2\|\D M(\Psi)\|_2<1,$ $\mathcal{R}(\D M(\Psi))\subseteq\mathcal{R}(M(\Psi))$ and $\mathcal{R}(\D M^{\top}(\Psi))\subseteq\mathcal{R}(M^{\top}(\Psi)).$ Then
	\begin{eqnarray*}
		M^{\dagger}(\Psi +\Delta \Psi)-M^{\dagger}(\Psi)
		 =-\sum _{k=1}^{p}M^{\dagger}\frac{\partial M(\Psi)}{\partial \psi_k}M^{\dagger} \D \psi_k+\mathcal{O}(\|\D\Psi\|_{\infty}^2).
	\end{eqnarray*}
	Furthermore, \begin{eqnarray*}
	\tilde{\M}^{\dagger}\big(M(\Psi)\big)&=&\frac{\|\sum _{k=1}^{p}|M^{\dagger}\frac{\partial M(\Psi)}{\partial \psi_k}M^{\dagger} | |\D \psi_k|\|_{\max}}{\|M^{\dagger}\|_{\max}},\\
	\tilde{\C}^{\dagger}\big(M(\Psi)\big)&=&\Vr\frac{1}{M^{\dagger}}\sum _{k=1}^{p}|M^{\dagger}\frac{\partial M(\Psi)}{\partial \psi_k}M^{\dagger} | |\D \psi_k|\Vr_{\max}.
	\end{eqnarray*}
\end{proposition}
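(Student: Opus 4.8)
The plan is to first recast the two range inclusions as annihilation identities for the orthogonal projectors $\E_M=I_m-MM^{\dagger}$ and $\F_M=I_n-M^{\dagger}M$, and then to substitute these into the first-order perturbation expansion that already appears in the proof of Lemma~\ref{lm31}. Since $MM^{\dagger}$ is the orthogonal projector onto $\mathcal{R}(M(\Psi))$, the hypothesis $\mathcal{R}(\D M(\Psi))\subseteq\mathcal{R}(M(\Psi))$ is equivalent to $MM^{\dagger}\D M(\Psi)=\D M(\Psi)$, i.e.\ to $\E_M\,\D M(\Psi)=\mathbf{0}$. Similarly, because $(M^{\top})^{\dagger}=(M^{\dagger})^{\top}$ and $M^{\dagger}M$ is symmetric, the orthogonal projector onto $\mathcal{R}(M^{\top}(\Psi))$ is exactly $M^{\dagger}M$, so $\mathcal{R}(\D M^{\top}(\Psi))\subseteq\mathcal{R}(M^{\top}(\Psi))$ is equivalent to $\F_M\,\D M^{\top}(\Psi)=\mathbf{0}$, which upon transposition reads $\D M(\Psi)\,\F_M=\mathbf{0}$.

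Next I would return to the expansion \eqref{eqn33}, namely $\D M^{\dagger}(\Psi)=-M^{\dagger}\D M(\Psi)M^{\dagger}+M^{\dagger}{M^{\dagger}}^{\top}(\D M(\Psi))^{\top}\E_M+\F_M(\D M(\Psi))^{\top}{M^{\dagger}}^{\top}M^{\dagger}+\mathcal{O}(\|\D\Psi\|_{\infty}^2)$, and exploit the symmetry $\E_M^{\top}=\E_M$, $\F_M^{\top}=\F_M$ to dispose of the last two terms. The second term carries the factor $(\D M(\Psi))^{\top}\E_M=(\E_M\,\D M(\Psi))^{\top}$, which is zero by the first identity, and the third term carries $\F_M(\D M(\Psi))^{\top}=(\D M(\Psi)\,\F_M)^{\top}$, which is zero by the second. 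Hence only $-M^{\dagger}\D M(\Psi)M^{\dagger}$ survives, and inserting the parameter expansion \eqref{Eq32} and collecting the cross terms into the quadratic remainder produces the stated first-order formula $M^{\dagger}(\Psi+\D\Psi)-M^{\dagger}(\Psi)=-\sum_{k=1}^{p}M^{\dagger}\frac{\partial M(\Psi)}{\partial \psi_k}M^{\dagger}\,\D\psi_k+\mathcal{O}(\|\D\Psi\|_{\infty}^2)$.

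The two condition-number expressions then follow by replaying the argument of Theorem~\ref{Th31}. Because the second and third projector blocks of $\mathcal{X}^{\dagger}_{\Psi}$ are now identically zero, the quantity $\mathcal{X}^{\dagger}_{\Psi}$ collapses to its single surviving block $\sum_{k=1}^{p}\bigl|M^{\dagger}\frac{\partial M(\Psi)}{\partial \psi_k}M^{\dagger}\bigr|\,|\psi_k|$, and applying absolute values, the $\max$-norm, and the scaling $|\D\psi_k|\le\ep|\psi_k|$ before letting $\ep\to0$ delivers the claimed formulas for $\tilde{\M}^{\dagger}(M(\Psi))$ and $\tilde{\C}^{\dagger}(M(\Psi))$.

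I expect the only real point requiring care to be the interface between the range constraints, which are imposed on the full perturbation $\D M(\Psi)$, and the first-order parameter expansion: one must note that $\E_M\,\D M(\Psi)$ and $\D M(\Psi)\,\F_M$ vanish \emph{exactly} (not merely to order $\mathcal{O}(\|\D\Psi\|_{\infty}^2)$), so the two offending terms drop out cleanly and do not leak into the first-order part. This is guaranteed by the projector characterization established in the first step, after which everything reduces to the bookkeeping already carried out in Lemma~\ref{lm31} and Theorem~\ref{Th31}.
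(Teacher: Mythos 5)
Your argument follows the paper's route almost exactly — translate the two range inclusions into the projector annihilation identities $\E_M\,\D M(\Psi)=\mathbf{0}$ and $\F_M\,\D M^{\top}(\Psi)=\mathbf{0}$, kill the second and third blocks of the expansion in Lemma~\ref{lm31}, and then replay Theorem~\ref{Th31} — but it skips the one step that the paper is careful about: justifying that the expansion \eqref{eqn33} applies at all under the stated hypotheses. That expansion (and Lemma~\ref{lm31}, which packages it) is only valid for $\D\Psi\in\S(\Psi)$, i.e.\ when $\rank(M(\Psi+\D\Psi))=\rank(M(\Psi))$; for a rank deficient $M$ the Moore--Penrose inverse is not even continuous without this, so one cannot "return to \eqref{eqn33}" before establishing it. The proposition's hypotheses do not include membership in $\S(\Psi)$ — only the norm bound $\|M^{\dagger}\|_2\|\D M(\Psi)\|_2<1$ and the two range inclusions — so rank preservation must be \emph{derived}. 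The paper does this by invoking the identity $M^{\dagger}(\Psi+\D\Psi)=(I_n+M^{\dagger}(\Psi)\D M(\Psi))^{-1}M^{\dagger}(\Psi)$ from \cite{ben1966error}, valid exactly under these three conditions, which immediately forces $\rank(M(\Psi+\D\Psi))=\rank(M(\Psi))$ and hence $\D\Psi\in\S(\Psi)$; only then is Lemma~\ref{lm31} applicable.

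The point you flag as the delicate one — that the annihilation identities hold exactly rather than to second order — is correct but is not where the real care is needed; the genuine interface issue is the rank condition above. Once that is supplied, the remainder of your argument (the collapse of $\mathcal{X}^{\dagger}_{\Psi}$ to the single block $\sum_{k=1}^{p}\bigl|M^{\dagger}\frac{\partial M(\Psi)}{\partial \psi_k}M^{\dagger}\bigr|\,|\psi_k|$ and the passage to the two condition-number formulas) matches the paper's and is fine.
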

\renewcommand\qedsymbol{$\blacksquare$}
\begin{proof}
	If $M(\Psi),\D M(\Psi)\in \R^{m\times n}$ satisfies the assumptions  $\mathcal{R}(\D M(\Psi))\subseteq\mathcal{R}(M(\Psi))$ and $\mathcal{R}(\D M^{\top}(\Psi))\subseteq\mathcal{R}(M^{\top}(\Psi)).$ Then 
	\begin{equation}\label{eqn37}
		M(\Psi)M^{\dagger}(\Psi)\D M(\Psi)=\D M(\Psi), \quad M^{\top}(\Psi){M^{\dagger}(\Psi)}^{\top}\D M^{\top}(\Psi)=\D M^{\top}(\Psi).
	\end{equation}
	In addition, if $\|M^{\dagger}(\Psi)\|_2 \|\D M(\Psi)\|_2 < 1$ holds, it is shown in \cite{ben1966error} that
	\begin{equation}\label{eqn38}
		M^{\dagger}(\Psi+\D\Psi)=(I_n+M^{\dagger}(\Psi)\D M(\Psi))^{-1}M^{\dagger}(\Psi).
	\end{equation}
	Now, $(\ref{eqn37})$ implies $\D M^{\top}(\Psi)\E_M={\bf 0}$ and $\F_M\D M^{\top}(\Psi)={\bf 0}.$ Again, $(\ref{eqn38})$ implies that $\rank(M(\Psi+\D\Psi))=\rank(M(\Psi)).$
	Therefore, $\D \Psi\in \S(\Psi).$ Hence, from Lemma \ref{lm31}, we get the desired expression. 
	
 The proof of the second part is a direct consequence of the derived perturbation expansion and the proof technique employed in Theorem $\ref{Th31}.$ 
\end{proof}
\begin{remark}\label{ref31}
	Using Proposition \ref{pro31}, and in an analogous approach to Corollary \ref{coro31}, we can recover the bounds for unstructured CNs obtained in \cite{yimin2005componentwise}.
\end{remark}
For the matrices with full column rank, the next theorem provides exact expressions of CNs for $M^{\dagger}(\Psi),$ introduced in Definition \ref{def31}.
\begin{theorem}\label{Th32}	
	For $M(\Psi)\in \mathbb{R}^{m\times n}$ with full column rank, we have
	\begin{equation*}
		\M^{\dagger}(M(\Psi))=\frac{\|\Hat{\mathcal{X}}^{\dagger}_{\Psi}\|_{\max}}{\Vert M^{\dagger}\Vert_{\max}}\quad \mbox{and} \quad
		\C^{\dagger}(M(\Psi))= \left\Vert\frac{\Hat{\mathcal{X}}^{\dagger}_{\Psi}}{ M^{\dagger}}\right\Vert_{\max},
	\end{equation*}
 where \begin{eqnarray*}
     \Hat{\mathcal{X}}^{\dagger}_{\Psi}= \sum _{k=1}^{p}\vr M^{\dagger}\frac{\partial M(\Psi)}{\partial \psi_k} M^{\dagger}-(M^{\top}M)^{-1}\Big(\frac{\partial M(\Psi)}{\partial \psi_k}\Big)^{\top}\E_M \vr | \psi_k|.
 \end{eqnarray*}
\end{theorem}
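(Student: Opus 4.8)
The plan is to prove Theorem \ref{Th32} by specializing the general perturbation expansion of Lemma \ref{lm31} to the full column rank case and then showing that the resulting upper bounds from Theorem \ref{Th31} are in fact attained, so that they become exact equalities. The crucial simplification is that when $M(\Psi)$ has full column rank, $r=n$, so $M^{\dagger}M=I_n$ and hence $\F_M=I_n-M^{\dagger}M={\bf 0}$. This immediately kills the third term in the expansion of Lemma \ref{lm31}. Moreover, in this case $M^{\dagger}=(M^{\top}M)^{-1}M^{\top}$, so that $M^{\dagger}{M^{\dagger}}^{\top}=(M^{\top}M)^{-1}$, which rewrites the second surviving term $M^{\dagger}{M^{\dagger}}^{\top}\big(\frac{\partial M(\Psi)}{\partial \psi_k}\big)^{\top}\E_M$ exactly as $(M^{\top}M)^{-1}\big(\frac{\partial M(\Psi)}{\partial \psi_k}\big)^{\top}\E_M$. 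This explains the precise form of $\Hat{\mathcal{X}}^{\dagger}_{\Psi}$ in the statement.

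First I would record the simplified first-order perturbation formula: under full column rank, Lemma \ref{lm31} reduces to
\begin{equation*}
M^{\dagger}(\Psi+\Delta\Psi)-M^{\dagger}(\Psi)=\sum_{k=1}^{p}\Big(-M^{\dagger}\frac{\partial M(\Psi)}{\partial \psi_k}M^{\dagger}+(M^{\top}M)^{-1}\Big(\frac{\partial M(\Psi)}{\partial \psi_k}\Big)^{\top}\E_M\Big)\Delta\psi_k+\mathcal{O}(\|\Delta\Psi\|_{\infty}^2).
\end{equation*}
The key structural observation for exactness is that the two terms inside the sum have disjoint supports in a useful sense: $-M^{\dagger}\frac{\partial M}{\partial \psi_k}M^{\dagger}$ has its rows in the row space of $M^{\dagger}$ (columns in $\mathcal{R}(M^{\dagger})=\mathcal{R}(M^{\top})$), while the factor $\E_M=I_m-MM^{\dagger}$ projects onto the orthogonal complement of $\mathcal{R}(M)$, so the second term's column structure is orthogonal to the first's. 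I would make this precise at the level of the $(i,j)$ entries, observing that for each fixed entry the two contributions cannot simultaneously be made large with a conflicting sign pattern that would force cancellation in the triangle inequality.

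The main obstacle, and the heart of the proof, is attainment: I must exhibit an admissible perturbation direction $\Delta\Psi\in\mathcal{S}(\Psi)$ achieving the bound entrywise, so that the triangle inequality used in Theorem \ref{Th31} holds with equality. The standard device is to choose $\Delta\psi_k=\ep\,\sign(\cdot)\,|\psi_k|$ with signs selected, for the entry $(i,j)$ that realizes the $\max$ norm, to align every summand's contribution to that entry. Because the parameters $\psi_k$ vary independently, such a sign choice is admissible, and for small $\ep$ the rank-preservation and the norm condition $\|M^{\dagger}\|_2\|\Delta M(\Psi)\|_2<1$ in $\mathcal{S}(\Psi)$ are automatically satisfied (indeed by Remark \ref{re21} the rank condition is trivial in the full column rank case). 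I would then take $\ep\to 0$ to conclude that the supremum in Definition \ref{def31} equals $\|\Hat{\mathcal{X}}^{\dagger}_{\Psi}\|_{\max}/\|M^{\dagger}\|_{\max}$, and verify the delicacy that the optimal sign pattern for the $\M^{\dagger}$ (numerator $\max$) case and the $\C^{\dagger}$ (entrywise-divided $\max$) case may differ, but in each case a single maximizing entry $(i,j)$ determines an admissible sign vector realizing equality. The componentwise statement follows by the identical argument applied to $\big\|\Hat{\mathcal{X}}^{\dagger}_{\Psi}/M^{\dagger}\big\|_{\max}$.
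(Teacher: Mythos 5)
Your proposal is correct and follows essentially the same route as the paper: set $\F_M={\bf 0}$, use $M^{\dagger}{M^{\dagger}}^{\top}=(M^{\top}M)^{-1}$ to obtain the simplified first-order expansion, bound via the triangle inequality over $k$, and attain the bound by choosing $\Delta\psi_k=-\ep\,\sign(M_k)_{lq}\,\sign(\psi_k)\,\psi_k$ aligned at the maximizing entry $(l,q)$, which is admissible by Lemma \ref{lm21} and Remark \ref{re21}. One remark: your paragraph on the two terms having ``disjoint supports'' is unnecessary (and not used in the paper) --- the absolute value in $\Hat{\mathcal{X}}^{\dagger}_{\Psi}$ encloses the entire difference $M_k$, so the triangle inequality is invoked only across the index $k$, and no non-cancellation between the two terms within a single $k$ needs to be established.
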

\renewcommand\qedsymbol{$\blacksquare$}
\begin{proof}
	Since $M(\Psi)$ is of full column rank matrix,  we have $\F_M={\bf 0}.$ Now, applying Remark \ref{re21}  on Lemma $\ref{lm31}$, we get following perturbation expression for $M^{\dagger}(\Psi)$
	\begin{align}\label{eqn39}
		\nonumber    \D M^{\dagger}(\Psi)=M^{\dagger}(\Psi +\D \Psi)-M^{\dagger}(\Psi)&=\sum_{k=1}^{p} \Big(-M^{\dagger}\frac{\partial M(\Psi)}{\partial \psi_k} M^{\dagger}+(M^{\top}M)^{-1}\\
		&\Big(\frac{\partial M(\Psi)}{\partial \psi_k}\Big)^{\top}\E_M\Big)\D \psi_k+\mathcal{O}(\|\D\Psi\|_{\infty}^2).
	\end{align}
	By employing a similar proof technique as in Theorem $\ref{Th31}$, and considering the given condition  $\Vert \Delta \Psi/\Psi \Vert_{\infty}\leq \ep,$ we can establish the following bound:
	\begin{equation}\label{eqn310}
		\M^{\dagger}(M(\Psi))\leq \frac{\Bigl \Vert \sum _{k=1}^{p}\vr M^{\dagger}\frac{\partial M(\Psi)}{\partial \psi_k} M^{\dagger}-(M^{\top}M)^{-1}\Big(\frac{\partial M(\Psi)}{\partial \psi_k}\Big)^{\top}\E_M \vr | \psi_k|\Bigr\Vert_{\max}}{\Vert M^{\dagger}\Vert_{\max}}.
	\end{equation}
	On the other hand, from Lemma \ref{lm21} and Remark \ref{re21}, it follows that we can choose arbitrary perturbation $\D \Psi\in \R^{p}$ on the parameter set $\Psi.$ Choose 
	\begin{equation}\label{eqn311}
		\Delta \psi_k=- \ep \, \sign(M_k )_{lq}\, \sign(\psi_k)\, \psi_k,
	\end{equation} 
	where $M_k=M^{\dagger}\frac{\partial M(\Psi)}{\partial \psi_k} M^{\dagger}-(M^{\top}M)^{-1}\Big(\frac{\partial M(\Psi)}{\partial \psi_k}\Big)^{\top}\E_M,$ for $k=1:p,$  $(M_k)_{lq}$ denotes the $lq$-th entry of the matrix $M_k,$ and  the indices $l$ and $q$ are such that
	\begin{equation*}
		\Bigl\Vert \sum _{k=1}^{p}\vert M_k \vert | \psi_k|\Bigr\Vert_{\max}=\Bigl( \sum _{k=1}^{p}\vert M_k\vert | \psi_k|\Bigr)_{lq}.
	\end{equation*}
	The upper bound in $(\ref{eqn310})$ is obtained by inserting the values of $(\ref{eqn311})$ in the perturbation expression $(\ref{eqn39})$ and from the Definition \ref{def31}. Therefore, the proof of the first claim is concluded.\\
	The second part of the claim can be obtained in a similar approach. 
\end{proof} 

\subsection{MNLS solution for general parameterized coefficient matrices}
Let us consider the LS problem \eqref{eq11} for the parameterized matrix $M(\Psi)\in \R^{m\times n}$
\begin{equation} \label{eqn312}
	\min_{z\in \R^{n}}\|M(\Psi)z-b\|_2
\end{equation}
  with $\rank(M(\Psi))=r$ and $b\in \R^m.$ When $M(\Psi)$ is rank deficient, the unique MNLS solution is provided by $\x=M(\Psi)^{\dagger}b.$ Moreover, in this situation, $\x$ is not even a continuous function of the data, and small changes in $M(\Psi)$ can produce large changes to $\x.$ This happens as a consequence of the behavior of the M-P inverse for any rank deficient matrix. Thus, according to Proposition \ref{prop21}, we consider the perturbation $\D \Psi\in \S(\Psi)$ for the parameters, and then the perturbed problem $$\min_{z \in \R^{n}}\|M(\Psi +\D \Psi)z-(b+\D b)\|_2$$  has the MNLS solution $\tilde{\x}=M(\Psi +\D \Psi)^{\dagger}(b+\D b).$ Consider $\D \x=\tilde{\x}-\x$.

In Definition \ref{def32}, for the MNLS solution $\x,$ we introduce its structured MCN and CCN.
\begin{definition}\label{def32}
	Let $M(\Psi)\in \mathbb{R}^{m\times n}$ with $\rank(M(\Psi))=r$ and $b\in \mathbb{R}^{m}$. Then, we define structured MCN and CCN of $\x$  as follows:
	\begin{eqnarray*}
		\M^{\dagger}(M(\Psi),b)&:=&\lim_{\ep \rightarrow 0}\sup \left\{ \frac{\|\Delta \x\|_{\infty}}{\ep \|\x\|_{\infty}}:\normx{\Delta \Psi/\Psi}_{\infty}\leq \ep, \normx{\Delta b/b}_{\infty}\leq \ep ,\, \D\Psi\in \S(\Psi) , \D b\in \mathbb{R}^m \right\},\\
		\C^{\dagger}(M(\Psi),b)&:=&\lim_{\ep \rightarrow 0}\sup \left\{\frac{1}{\ep}\left \lVert \frac{\Delta \x} {\x}\right \rVert_{\infty}:\normx{\Delta \Psi/\Psi}_{\infty}\leq \ep, \normx{\Delta b/b}_{\infty}\leq \ep,\,  \D\Psi \in\S(\Psi), \D b\in \mathbb{R}^m \right\}.
	\end{eqnarray*}
\end{definition}
Our main objective of this section is to find general expressions of bounds for the CNs  introduced in Definition \ref{def32}, and the following lemma provides the perturbation expansion for the MNLS solution.
\begin{lemma}\label{lm32}
	Let $M(\Psi)\in \mathbb{R}^{m\times n}$ with $\rank(M(\Psi))=r$ and $b\in \mathbb{R}^{m}$. Suppose $\Delta \Psi \in \S(\Psi)$ and $\Delta b\in \R^m,$ 
	and set ${\bf r}:=b-M(\Psi)\x.$ Then
	\begin{align*}
		\Delta \x =\sum _{k=1}^{p}&\left(-M^{\dagger}\frac{\partial M(\Psi)}{\partial \psi_k} \x + M^{\dagger}{M^{\dagger}}^{\top}\Bigl(\frac{\partial M(\Psi)}{\partial \psi_k}\Bigr)^{\top} \r+\F_M \Big(\frac{\pr M(\Psi)}{\pr \psi_k}\Big)^{\top}{M^{\dagger}}^{\top} \x \right)\Delta \psi_k\\ &+\sum_{i=1}^mM^{\dagger} e_i^{m} \Delta b_i +\mathcal{O}(\|[\D \Psi,\D b]\|_{\infty}^2).
	\end{align*} 
\end{lemma}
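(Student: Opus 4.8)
The plan is to mimic the structure of the proof of Lemma~\ref{lm31}, but now track the right-hand side $b$ as well. Since $\x = M^{\dagger}(\Psi)b$ and $\tilde{\x} = M^{\dagger}(\Psi+\D\Psi)(b+\D b)$, I would write
\begin{align*}
\D\x = \tilde{\x}-\x = M^{\dagger}(\Psi+\D\Psi)(b+\D b)-M^{\dagger}(\Psi)b = \big(\D M^{\dagger}(\Psi)\big)b + M^{\dagger}(\Psi)\D b + \big(\D M^{\dagger}(\Psi)\big)\D b,
\end{align*}
where $\D M^{\dagger}(\Psi) = M^{\dagger}(\Psi+\D\Psi)-M^{\dagger}(\Psi)$. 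The cross term $(\D M^{\dagger}(\Psi))\D b$ is second order in $\|[\D\Psi,\D b]\|_{\infty}$ and gets absorbed into the $\mathcal{O}(\|[\D\Psi,\D b]\|_{\infty}^2)$ remainder, so the first-order contribution comes only from the first two terms.

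First I would substitute the first-order perturbation expansion for $M^{\dagger}(\Psi)$ from Lemma~\ref{lm31} into $(\D M^{\dagger}(\Psi))b$, giving
\begin{align*}
\big(\D M^{\dagger}(\Psi)\big)b = \sum_{k=1}^{p}\Bigl(-M^{\dagger}\frac{\pr M(\Psi)}{\pr\psi_k}M^{\dagger}b + M^{\dagger}{M^{\dagger}}^{\top}\Bigl(\frac{\pr M(\Psi)}{\pr\psi_k}\Bigr)^{\top}\E_M b + \F_M\Bigl(\frac{\pr M(\Psi)}{\pr\psi_k}\Bigr)^{\top}{M^{\dagger}}^{\top}M^{\dagger}b\Bigr)\D\psi_k + \mathcal{O}(\|\D\Psi\|_{\infty}^2).
\end{align*}
Next I would rewrite each coefficient using $\x = M^{\dagger}b$ and the residual identity. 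Since $\r := b - M(\Psi)\x = b - MM^{\dagger}b = (I_m - MM^{\dagger})b = \E_M b$, the middle term becomes $M^{\dagger}{M^{\dagger}}^{\top}(\frac{\pr M}{\pr\psi_k})^{\top}\r$ exactly as stated. The first term becomes $-M^{\dagger}\frac{\pr M}{\pr\psi_k}\x$. For the third term I would use $M^{\dagger}M M^{\dagger} = M^{\dagger}$ to replace $M^{\dagger}b = \x$, obtaining $\F_M(\frac{\pr M}{\pr\psi_k})^{\top}{M^{\dagger}}^{\top}\x$, which matches the claimed expression. Finally, expanding $M^{\dagger}\D b = \sum_{i=1}^{m}M^{\dagger}e_i^m\,\D b_i$ accounts for the right-hand side perturbation term.

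The only genuinely delicate point is justifying that the perturbation expansion of Lemma~\ref{lm31} may be applied: this requires $\D\Psi\in\S(\Psi)$, which is given by hypothesis, guaranteeing both the rank-preserving condition and $\|M^{\dagger}\|_2\|\D M(\Psi)\|_2<1$ so that $M^{\dagger}$ is continuous and its first-order expansion is valid. The remaining work is the bookkeeping of substituting $\x=M^{\dagger}b$ and $\r=\E_M b$ into the three structured terms and collecting the second-order remainders $\mathcal{O}(\|\D\Psi\|_{\infty}^2)$, $\mathcal{O}(\|\D b\|_{\infty}\|\D\Psi\|_{\infty})$ into a single $\mathcal{O}(\|[\D\Psi,\D b]\|_{\infty}^2)$ term; this last consolidation is routine once one notes $\|\D\Psi\|_{\infty},\|\D b\|_{\infty}\le\|[\D\Psi,\D b]\|_{\infty}$.
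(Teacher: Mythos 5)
Your proposal is correct and follows essentially the same route as the paper: both substitute the expansion of Lemma~\ref{lm31} into $M^{\dagger}(\Psi+\D\Psi)(b+\D b)-M^{\dagger}(\Psi)b$, use $\x=M^{\dagger}b$ and $\r=\E_M b$ to rewrite the three structured terms, absorb the cross term into the $\mathcal{O}(\|[\D\Psi,\D b]\|_{\infty}^2)$ remainder, and expand $M^{\dagger}\D b=\sum_{i=1}^{m}M^{\dagger}e_i^m\,\D b_i$. The only cosmetic difference is that you separate $\D\x$ into $(\D M^{\dagger})b+M^{\dagger}\D b+(\D M^{\dagger})\D b$ up front, whereas the paper multiplies the full expansion against $(b+\D b)$ and then collects terms; the algebra is identical.
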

\renewcommand\qedsymbol{$\blacksquare$}
\begin{proof}
	Since $\Delta \x= M^{\dagger}(\Psi +\Delta \Psi)(b+\Delta b)-M^{\dagger}(\Psi)b$ and $\Delta \Psi \in \S(\Psi),$ using Lemma \ref{lm31}, we get 
	\begin{align}\label{eqn313}
		\Delta \x \nonumber&= \Bigg(\sum _{k=1}^{p}\Bigl(-M^{\dagger}\frac{\partial M(\Psi)}{\partial \psi_k}M^{\dagger}+M^{\dagger}{M^{\dagger}}^{\top}\Bigl(\frac{\partial M(\Psi)}{\partial \psi_k}\Bigr)^{\top}\E_M\\
		&\quad+\F_M\Bigl(\frac{\partial M(\Psi)}{\partial \psi_k}\Bigr)^{\top} {M^{\dagger}}^{\top}M^{\dagger}\Bigr)\Delta \psi_k +M^{\dagger}(\Psi)\Bigg)(b+\Delta b)-M^{\dagger}(\Psi) b+\mathcal{O}(\|\D \Psi\|_{\infty}^2)\\ \nonumber
		&=\sum _{k=1}^{p}\Bigl(-M^{\dagger}\frac{\partial M(\Psi)}{\partial \psi_k}\x+M^{\dagger}{M^{\dagger}}^{\top}\Bigl(\frac{\partial M(\Psi)}{\partial \psi_k}\Bigr)^{\top}\r+\F_M\Bigl(\frac{\partial M(\Psi)}{\partial \psi_k}\Bigr)^{\top} {M^{\dagger}}^{\top}\x\Bigr)\Delta \psi_k \\ \label{EQ315}
		& \quad \quad+M^{\dagger}(\Psi)\Delta b+\, \mathcal{O}(\|[\D \Psi,\D b]\|_{\infty}^2).
	\end{align}
	On the other hand, for the perturbation $\D b$ in $b,$ we can write $\Delta b=\sum_{i=1}^m e_i^m\Delta b_i.$ Therefore, from \eqref{EQ315},  we get 
	\begin{align*}
		\Delta \x= &\sum _{k=1}^{p}\Bigl(-M^{\dagger}\frac{\partial M(\Psi)}{\partial \psi_k}\x+M^{\dagger}{M^{\dagger}}^{\top}\Bigl(\frac{\partial M(\Psi)}{\partial \psi_k}\Bigr)^{\top}\r+\F_M\Bigl(\frac{\partial M(\Psi)}{\partial \psi_k}\Bigr)^{\top} {M^{\dagger}}^{\top}\x\Bigr)\Delta \psi_k \\
		&+M^{\dagger} \sum_{i=1}^m e_i^m\Delta b_i+ \, \mathcal{O}(\|[\D \Psi,\D b]\|_{\infty}^2),
	\end{align*}
	and hence, the desired result is obtained. 
\end{proof}

In Theorem \ref{Th33}, we provide general expressions for the upper bounds of $\M^{\dagger}(M(\Psi),b)$ and $ \C^{\dagger}(M(\Psi),b).$
\begin{theorem}\label{Th33}
	Let $M(\Psi) \in \mathbb{R}^{m\times n}$ be a matrix having $\rank(M(\Psi))= r$ and $b\in \mathbb{R}^m.$ Then, 
	\begin{eqnarray*}
		\M^{\dagger}(M(\Psi),b)&\leq& \frac{\|  \mathcal{X}^{ls}_{\Psi}\|_{\infty}}{\Vert \x \Vert_{\infty}}:=\tilde{\M}^{\dagger}(M(\Psi),b),\\
		\C^{\dagger}(M(\Psi),b) &\leq &\Big\Vert  \Theta_{\x^\ddagger} \mathcal{X}^{ls}_{\Psi} \Big\Vert_{\infty}:=\tilde{\C}^{\dagger}(M(\Psi),b),
	\end{eqnarray*}
	where 
 \begin{eqnarray*}
     \mathcal{X}^{ls}_{\Psi}= \sum _{k=1}^{p}\Big(\vr M^{\dagger}\frac{\partial M(\Psi)}{\partial \psi_k}\x\vr+\vr M^{\dagger}{M^{\dagger}}^{\top}\Bigl(\frac{\partial M(\Psi)}{\partial \psi_k}\Bigr)^{\top} \r \vr
		 +\vr \F_M \Big(\frac{\pr M(\Psi)}{\pr \psi_k}\Big)^{\top} {M^{\dagger}}^{\top} \x\vr\Big) |\psi_k| + |M^{\dagger}||b|
 \end{eqnarray*}
 and $\Theta_{\x^{\ddagger}}=\diag(\x^{\ddagger}).$
\end{theorem}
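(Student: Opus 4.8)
The plan is to mirror the structure of the proof of Theorem~\ref{Th31}, but now starting from the perturbation expansion for the MNLS solution furnished by Lemma~\ref{lm32} rather than the one for the M-P inverse. First I would invoke Lemma~\ref{lm32} to write $\Delta \x$ as the sum of the three parameter-dependent terms weighted by $\Delta \psi_k$, plus the term $\sum_{i=1}^{m} M^{\dagger} e_i^{m}\,\Delta b_i$ coming from the perturbation in $b$, discarding the $\mathcal{O}(\|[\D\Psi,\D b]\|_{\infty}^2)$ remainder. Taking absolute values entrywise and applying the triangle inequality together with the identity $|ab^{\top}|=|a||b^{\top}|$ (equation~\eqref{Eq35} in the excerpt), I would bound $|\Delta \x|$ componentwise by
\begin{align*}
  |\Delta \x| \leq \sum_{k=1}^{p}\Big(\vr M^{\dagger}\tfrac{\pr M(\Psi)}{\pr \psi_k}\x\vr + \vr M^{\dagger}{M^{\dagger}}^{\top}\big(\tfrac{\pr M(\Psi)}{\pr \psi_k}\big)^{\top}\r\vr + \vr \F_M\big(\tfrac{\pr M(\Psi)}{\pr \psi_k}\big)^{\top}{M^{\dagger}}^{\top}\x\vr\Big)|\Delta\psi_k| + |M^{\dagger}||\Delta b| + \mathcal{O}(\ep^2).
\end{align*}

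Next I would feed in the constraints from Definition~\ref{def32}. The bound $\normx{\Delta\Psi/\Psi}_{\infty}\leq\ep$ gives $|\Delta\psi_k|\leq \ep\,|\psi_k|$ for every $k$, and $\normx{\Delta b/b}_{\infty}\leq\ep$ gives $|\Delta b|\leq \ep\,|b|$ entrywise; substituting these converts the right-hand side into $\ep$ times exactly the quantity $\mathcal{X}^{ls}_{\Psi}$ defined in the statement (the $|M^{\dagger}||b|$ term arising precisely from the $|M^{\dagger}||\Delta b|$ contribution). Taking the $\infty$-norm of both sides, using its monotonicity under entrywise inequalities, and dividing by $\ep\|\x\|_{\infty}$ yields $\|\Delta\x\|_{\infty}/(\ep\|\x\|_{\infty}) \leq \|\mathcal{X}^{ls}_{\Psi}\|_{\infty}/\|\x\|_{\infty} + \mathcal{O}(\ep)$; letting $\ep\to 0$ and taking the supremum over admissible perturbations establishes the claimed bound $\tilde{\M}^{\dagger}(M(\Psi),b)$. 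For the CCN I would instead multiply $|\Delta\x|$ entrywise by $\x^{\ddagger}$, i.e.\ apply $\Theta_{\x^{\ddagger}}$, before taking the $\infty$-norm, which produces $\tilde{\C}^{\dagger}(M(\Psi),b)=\|\Theta_{\x^{\ddagger}}\mathcal{X}^{ls}_{\Psi}\|_{\infty}$; the division-by-$\x$ bookkeeping is handled cleanly by the convention $a^{\ddagger}=1/a$ for $a\neq 0$ introduced in Section~\ref{s2}.

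The step I expect to require the most care is the combined treatment of the two independent perturbation sources $\Delta\Psi$ and $\Delta b$: one must keep both families of constraints active simultaneously and verify that the supremum over the joint admissible set is governed by a single scalar $\ep$, so that both contributions scale linearly and combine additively into $\mathcal{X}^{ls}_{\Psi}$. Since the statement asserts only upper bounds (not equalities), no matching lower-bound perturbation needs to be exhibited, which avoids the sign-choice construction used in Theorem~\ref{Th32}; the triangle-inequality direction suffices throughout. The remaining manipulations—entrywise absolute-value estimates, the factorization $|ab^{\top}|=|a||b^{\top}|$, and monotonicity of the $\infty$-norm—are routine and parallel to those already carried out in Theorem~\ref{Th31}, so the overall argument is a direct adaptation with the residual vector $\r$ and the data vector $b$ playing the roles previously occupied by the structural terms in the M-P inverse case.
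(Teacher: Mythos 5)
Your proposal is correct and follows essentially the same route as the paper's proof: both start from the perturbation expansion of Lemma \ref{lm32}, take entrywise absolute values with the triangle inequality, substitute the componentwise constraints $|\D\psi_k|\leq\ep|\psi_k|$ and $|\D b_i|\leq\ep|b_i|$, take the $\infty$-norm, and let $\ep\to 0$, handling the CCN via $\left\|\frac{\D\x}{\x}\right\|_{\infty}=\left\|\Theta_{\x^{\ddagger}}\D\x\right\|_{\infty}$. Your observation that no extremal perturbation construction is needed because only upper bounds are claimed matches the paper exactly.
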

\renewcommand\qedsymbol{$\blacksquare$}
\begin{proof}  
From Lemma \ref{lm32} and utilizing the properties of absolute values, we obtain
	\begin{align}\label{eqn314}
		|\D \x|
		\nonumber\leq \sum _{k=1}^{p}\Bigg(&\vr M^{\dagger}\frac{\partial M(\Psi)}{\partial \psi_k} \x\vr+\vr M^{\dagger}  {M^{\dagger}}^{\top}\Bigl(\frac{\partial M(\Psi)}{\partial \psi_k}\Bigr)^ T \r \vr+ \vr \F_M \Big(\frac{\pr M(\Psi)}{\pr \psi_k}\Big)^{\top}{M^{\dagger}}^{\top} \x\vr\Bigg) \vert \Delta \psi_k\vert\\
		& +\sum_{i=1}^m \vert M^{\dagger} \vert \vert \D b_i\vert +  \mathcal{O}(\|[\D \Psi,\D b]_{\infty}\|^2).
	\end{align}
	Now, by Definition \ref{def32}, $\Vert \Delta \Psi/\Psi \Vert_{\infty}\leq \ep $ and $\Vert \D b/b\Vert_{\infty} \leq \ep $ implies that for $k=1:p,$ $\vert \D \psi_k\vert \leq \ep \, |\psi_k|$, and for  $i=1:m,$  $\vert \D b_i| \leq \ep |b_i|,$ respectively. Taking  infinity norm in $(\ref{eqn314})$, we deduced that
	\begin{align} \label{eqn315}
		\nonumber	\|\D \x\|_{\infty}
		&\leq \ep \, \Vr\sum _{k=1}^{p}\Big(\vr M^{\dagger}\frac{\partial M(\Psi)}{\partial \psi_k} \x\vr +\vr  M^{\dagger}  {M^{\dagger}}^{\top}\Bigl(\frac{\partial M(\Psi)}{\partial \psi_k}\Bigr)^ T \r\\
		&+\vr \F_M \Bigl(\frac{\pr M(\Psi)}{\pr \psi_k}\Bigr)^{\top}{M^{\dagger}}^{\top} \x\vr\Big) \vert \psi_k\vert + | M^{\dagger}| |b| \Vr_{\infty} +  \,  \mathcal{O}(\ep^2).
	\end{align} 
	Then, if we take $\ep \rightarrow 0$ in $(\ref{eqn315})$ and from Definition \ref{def32}, we attain the desired result of the first assertion.
The second assertion follows in a similar manner, as we can express $\left\|\frac{\D\x}{\x}\right\|_{\infty}=\left\|\Theta_{\x^{\ddagger}} \D \x\right\|_{\infty}.$ 
\end{proof}

Next, we discuss the bounds of the CNs for the MNLS solution of the LS problem $(\ref{eqn312})$ corresponding to unstructured matrices.
\begin{corollary}\label{coro34}
	For $M\in \R^{m\times n}$ having $\rank(M)=r$ and $b\in \R^m,$ we have
	\begin{eqnarray*}
		\tilde{\M}^{\dagger}(M,b)&:=&\frac{\Vr\vert M^{\dagger} \vert \vert M \vert \vert \x\vert + |M^{\dagger}{M^{\dagger}} ^{\top}| |M^{\top}| |\r|+|\F_M| |M^{\top}| |{M^{\dagger}} ^{\top}\x|+|M^{\dagger}||b|\Vr_{\infty}}{\|\x\|_{\infty}},\\
		\tilde{\C}^{\dagger}(M,b)&:=&\Vr \Theta_{\x^{\ddagger}}\Bigl(\vert M^{\dagger} \vert \vert M \vert \vert {\x}\vert + |M^{\dagger}{M^{\dagger}} ^{\top}| |M^{\top}| |\r|+|\F_M| |M^{\top}| |{M^{\dagger}} ^{\top} \x|+|M^{\dagger}||b| \Bigr)\Vr_{\infty}.
	\end{eqnarray*}
\end{corollary}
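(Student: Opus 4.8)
The plan is to specialize the general parameterized bound of Theorem $\ref{Th33}$ to the unstructured setting by taking the parameters to be the matrix entries themselves, exactly as was done for the M-P inverse in Corollary $\ref{coro31}$. Concretely, I would set $\Psi=[\{m_{ij}\}_{i,j=1}^{m,n}]^{\top}\in \R^{mn}$, so that the partial derivatives reduce to the rank-one matrices $\frac{\partial M}{\partial m_{ij}}=e_i^{m}(e_j^{n})^{\top}$ from $(\ref{eqn34})$. Substituting these into the quantity $\mathcal{X}^{ls}_{\Psi}$ appearing in Theorem $\ref{Th33}$ converts the single sum over $k$ into a double sum over the indices $i$ and $j$.

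The core of the argument is then a term-by-term simplification of this double sum (writing $x_j$ and $r_i$ for the entries of $\x$ and $\r$). In the first summand, $\frac{\partial M}{\partial m_{ij}}\x=e_i^{m}x_j$, so $M^{\dagger}\frac{\partial M}{\partial m_{ij}}\x=M^{\dagger}(:,i)\,x_j$; using $|ab^{\top}|=|a||b^{\top}|$ from $(\ref{Eq35})$ to pass the absolute value through and summing, $\sum_{i,j}|M^{\dagger}(:,i)|\,|x_j|\,|m_{ij}|$ collapses to $|M^{\dagger}||M||\x|$. The second and third summands are treated the same way, but the transpose $(\frac{\partial M}{\partial m_{ij}})^{\top}=e_j^{n}(e_i^{m})^{\top}$ swaps the roles of the two indices: the second gives $M^{\dagger}{M^{\dagger}}^{\top}(:,j)\,r_i$, contracting to $|M^{\dagger}{M^{\dagger}}^{\top}||M^{\top}||\r|$, and the third gives $\F_M(:,j)\,({M^{\dagger}}^{\top}\x)_i$, contracting to $|\F_M||M^{\top}||{M^{\dagger}}^{\top}\x|$. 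The final term $|M^{\dagger}||b|$ does not depend on the choice of parameters and carries over unchanged.

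Collecting these four pieces gives $\mathcal{X}^{ls}_{\Psi}=|M^{\dagger}||M||\x|+|M^{\dagger}{M^{\dagger}}^{\top}||M^{\top}||\r|+|\F_M||M^{\top}||{M^{\dagger}}^{\top}\x|+|M^{\dagger}||b|$, and inserting this into the two bounds of Theorem $\ref{Th33}$ yields the stated expressions for $\tilde{\M}^{\dagger}(M,b)$ and $\tilde{\C}^{\dagger}(M,b)$, the latter after recalling $\|\frac{\D\x}{\x}\|_{\infty}=\|\Theta_{\x^{\ddagger}}\D\x\|_{\infty}$. I expect the only delicate point to be the index bookkeeping in the second and third terms: because of the transpose, one must verify that the free index contracts against the row sum $|M^{\top}|$ (rather than $|M|$) and that the scalar factors $r_i$ and $({M^{\dagger}}^{\top}\x)_i$ attach to the correct column, so that the double sum collapses to the advertised matrix products and not their transposes.
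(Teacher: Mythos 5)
Your proposal is correct and matches the paper's own proof, which likewise specializes Theorem \ref{Th33} by taking $\Psi=[\{m_{ij}\}_{i,j=1}^{m,n}]^{\top}$ and invoking \eqref{eqn34} and \eqref{Eq35} in analogy with Corollary \ref{coro31}. The term-by-term contractions you carry out (including the index swap induced by the transpose in the second and third summands) are exactly the computation the paper leaves implicit.
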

\renewcommand\qedsymbol{$\blacksquare$}
\begin{proof}
 The proof follows in an analogous way to the Corollary \ref{coro31} by considering $\Psi=[\{m_{ij}\}_{i,j=1}^{m,n}]^{\top}\in \R^{mn}$ in Theorem \ref{Th33}    and using $(\ref{eqn34})$ and \eqref{Eq35}. 
\end{proof}
The next theorem offers explicit formulae of structured CNs for the unique LS solution $\x=M^{\dagger}(\Psi)b$ for full column rank matrices.
\begin{theorem}\label{Th34}
	For $M(\Psi) \in \mathbb{R}^{m\times n}$ having full column rank and $b\in \mathbb{R}^m,$ we get
\begin{eqnarray*}
\M^{\dagger}\big(M(\Psi),b\big)=\frac{\|\Hat{\mathcal{X}}^{ls}_{\Psi} \|_{\infty}}{\Vert \x \Vert_{\infty}}\quad\mbox{and}\quad
\C^{\dagger}\big(M(\Psi),b\big)=\Vr\Theta_{\x^\ddagger}\Hat{\mathcal{X}}^{ls}_{\Psi} \Vr_{\infty},
	\end{eqnarray*}
	where \begin{eqnarray}
	   \Hat{\mathcal{X}}^{ls}_{\Psi}= \sum _{k=1}^{p}\vr M^{\dagger}\frac{\partial M(\Psi)}{\partial \psi_k}\x-(M^{\top}M)^{-1}\Bigl(\frac{\partial M(\Psi)}{\partial \psi_k}\Bigr)^{\top} \r \vr | \psi_k| \,+ \,|M^{\dagger}||b|.
	\end{eqnarray}
\end{theorem}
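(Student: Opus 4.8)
The plan is to specialize the first-order expansion of Lemma~\ref{lm32} to the full-column-rank setting and then upgrade the resulting bound to an exact identity, following the pattern used for $M^{\dagger}(\Psi)$ in Theorem~\ref{Th32}. Two ingredients are needed: a reduced perturbation expansion for $\D\x$, and an explicit extremal choice of $(\D\Psi,\D b)$ that attains the supremum.

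First I would use the full column rank of $M(\Psi)$. Since $M^{\dagger}M=I_n$ we have $\F_M={\bf 0}$, so the third summand in Lemma~\ref{lm32} vanishes; and since $M^{\dagger}=(M^{\top}M)^{-1}M^{\top}$, one checks $M^{\dagger}{M^{\dagger}}^{\top}=(M^{\top}M)^{-1}$. Substituting these into Lemma~\ref{lm32} yields
\[
	\D\x=\sum_{k=1}^{p}\Big(-M^{\dagger}\frac{\partial M(\Psi)}{\partial \psi_k}\x+(M^{\top}M)^{-1}\Big(\frac{\partial M(\Psi)}{\partial \psi_k}\Big)^{\top}\r\Big)\D\psi_k+M^{\dagger}\D b+\mathcal{O}(\|[\D\Psi,\D b]\|_{\infty}^{2}).
\]
By Lemma~\ref{lm21} and Remark~\ref{re21}, full column rank is preserved under every sufficiently small perturbation, so $\S(\Psi)$ imposes no real restriction: any $\D\Psi\in\R^{p}$ paired with any $\D b\in\R^{m}$ is admissible. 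This freedom is exactly what will let me choose perturbations that saturate the bound.

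Taking absolute values entrywise in the expansion, invoking $|\D\psi_k|\le\ep|\psi_k|$ and $|\D b_i|\le\ep|b_i|$ from Definition~\ref{def32}, passing to the infinity norm, and letting $\ep\to0$ gives the upper bounds $\M^{\dagger}(M(\Psi),b)\le\|\Hat{\mathcal{X}}^{ls}_{\Psi}\|_{\infty}/\|\x\|_{\infty}$ and, via $\|\D\x/\x\|_{\infty}=\|\Theta_{\x^{\ddagger}}\D\x\|_{\infty}$, the corresponding CCN bound, precisely as in Theorem~\ref{Th33}. To obtain the reverse inequalities I would argue as in Theorem~\ref{Th32}: let $l$ index a largest entry of $\Hat{\mathcal{X}}^{ls}_{\Psi}$, write $N_k:=M^{\dagger}\frac{\partial M(\Psi)}{\partial\psi_k}\x-(M^{\top}M)^{-1}\big(\frac{\partial M(\Psi)}{\partial\psi_k}\big)^{\top}\r$ for the vector under the absolute value, and set $\D\psi_k=-\ep\,\sign\big((N_k)_l\big)\sign(\psi_k)\psi_k$ together with $\D b_i=\ep\,\sign\big((M^{\dagger})_{li}\big)\sign(b_i)b_i$. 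With these signs every term contributing to the $l$-th component of $\D\x$ adds constructively, so $|(\D\x)_l|=\ep(\Hat{\mathcal{X}}^{ls}_{\Psi})_l+\mathcal{O}(\ep^2)=\ep\|\Hat{\mathcal{X}}^{ls}_{\Psi}\|_{\infty}+\mathcal{O}(\ep^2)$, which forces $\|\D\x\|_{\infty}$ to reach the upper bound and establishes the MCN equality; the CCN equality follows by maximizing the scaled vector $\Theta_{\x^{\ddagger}}\Hat{\mathcal{X}}^{ls}_{\Psi}$ in the same way.

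The step I expect to be the main obstacle is the reverse inequality, i.e., confirming that one pair $(\D\Psi,\D b)$ can simultaneously align the signs of the $M^{\dagger}$-, $\x$-, and $\r$-contributions in the selected row $l$. Full column rank disposes of the rank-preservation difficulty, so admissibility is automatic; what remains to check carefully is that the sign assignments are mutually consistent and, more importantly, that the discarded $\mathcal{O}(\ep^2)$ remainder does not destroy the first-order extremum in the limit $\ep\to0$.
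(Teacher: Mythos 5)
Your proposal is correct and follows essentially the same route as the paper: reduce Lemma~\ref{lm32} via $\F_M={\bf 0}$ and $M^{\dagger}{M^{\dagger}}^{\top}=(M^{\top}M)^{-1}$, obtain the upper bound as in Theorem~\ref{Th33}, and then attain it with the sign-aligned choices $\Delta\psi_k=-\ep\,\sign((N_k)_l)\sign(\psi_k)\psi_k$ and $\Delta b_i=\ep\,\sign((M^{\dagger})_{li})\sign(b_i)b_i$, which are exactly the perturbations the paper uses. The "obstacle" you flag is not one: the $\Delta\psi_k$ and $\Delta b$ contributions are independent and each $N_k$ is aligned as a whole in row $l$, so the constructive interference goes through and the $\mathcal{O}(\ep^2)$ remainder vanishes in the limit $\ep\to 0$.
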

\renewcommand\qedsymbol{$\blacksquare$}
\begin{proof} 
	In Lemma \ref{lm32}, using the fact that for any full column rank matrix $\F_M={\bf 0},$ we have 
	\begin{equation}\label{eqn316}
		\Delta \x =\sum _{k=1}^{p}\left(-M^{\dagger}\frac{\partial M(\Psi)}{\partial \psi_k} \x + M^{\dagger}{M^{\dagger}}^{\top}\Bigl(\frac{\partial M(\Psi)}{\partial \psi_k}\Bigr)^{\top} \r \right)\Delta \psi_k +\sum_{i=1}^mM^{\dagger} e_i^{m} \Delta b_i +\mathcal{O}(\|[\D \psi,\D b]\|_{\infty}^2).
	\end{equation}
	Now, by applying the proof method used in Theorem $\ref{Th32}$ and considering the given conditions $\|\D \Psi/\Psi\|_{\infty}\leq \ep$ and $\|\D b/b\|_{\infty}\leq \ep,$ we obtain
	\begin{align}\label{eqn317}
		\M^{\dagger}(M(\Psi),b)\leq \frac{\Vr \sum _{k=1}^{p}\vr M^{\dagger}\frac{\partial M(\Psi)}{\partial \psi_k}\x-(M^{\top}M)^{-1}\Bigl(\frac{\partial M(\Psi)}{\partial \psi_k}\Bigr)^{\top} \r \vr | \psi_k| + |M^{\dagger}||b| \Vr_{\infty}}{\Vert \x \Vert_{\infty}}.
	\end{align}
	From Lemma \ref{lm21} and Remark \ref{re21}, we can choose the perturbation $\D \Psi$ on the parameters set $\Psi$ arbitrarily from $\R^p.$ Consider the following perturbations
	\begin{center}
		$\Delta b=\ep \, \mathbf{\Theta}_{M^{\dagger}}\mathbf{\Theta}_{b}\,b,$
	\end{center} 
	where $\mathbf{\Theta}_{M^{\dagger}}$  and $\mathbf{\Theta}_{b}$ are the  diagonal matrices having diagonal entries $\mathbf{\Theta}_{M^{\dagger}}(i,i)=\sign(M^{\dagger}(l,i)),$ and $\mathbf{\Theta}_b(i,i)=\sign(b_i)$ for $i=1:m,$ respectively, and
	\begin{center}
		$\Delta \psi_k=- \ep \, \sign(M_{\x,k})_l \,\sign(\psi_k)\, \psi_k,$
	\end{center} 
	where $M_{\x,k}:=M^{\dagger}\frac{\partial M(\Psi)}{\partial \psi_k}\x-(M^{\top}M)^{-1}\left(\frac{\partial M(\Psi)}{\partial \psi_k}\right)^{\top} \r$  and  $l$ is the index so that 
	\begin{equation*}
			\Bigl\Vert \sum _{k=1}^{p}\vert M_{\x,k} \vert | \psi_k| + |M^{\dagger}||b|\Bigr\Vert_{\infty}=\Big(\sum _{k=1}^{p}\vert M_{\x,k} \vert | \psi_k| + |M^{\dagger}||b|\Big)_l. 
	\end{equation*}
	The upper bound in $(\ref{eqn317})$ will be attained by substituting these perturbations in $(\ref{eqn316})$ and from Definition $\ref{def32},$ and hence the desired expression is attained.
	Analogously, we can obtain the expression for the CCN.
\end{proof}
{\begin{remark} The formula for the MCN $\M^{\dagger}(M(\phi),b)$ in Theorem \ref{Th34} is also presented in \cite{thesis}. However, our approach to proving the theorem differs slightly. Interested readers may also refer to the proof method in \cite{thesis}. 
\end{remark}}
\section{\label{sec4} Cauchy-Vandermonde (CV) matrices}
In this section, we start by reviewing the definition of CV matrices. Subsequently,  we provide the derivative expressions for the matrix with respect to its parameter set. These expressions play a pivotal role in the derivation of computationally feasible upper bounds for the structured  CNs of the M-P inverse and the solution of the LS problem for a rank deficient CV matrix given in Theorems \ref{Th41} and \ref{Th43}, respectively.  Explicit formulations for these  CNs are also provided in the Theorems \ref{Th42} and \ref{Th44}, respectively, when the matrix has full column rank.
\begin{definition} \label{def:CV}\cite{huang2019accurate}
A matrix $M\in \R^{m\times n}$ is classified as a CV matrix if it satisfies the following conditions: for  $c=[c_1,c_2,\ldots, c_m]^{\top}\in \mathbb{R}^m$ and $d=[d_1,d_2,\ldots, d_l]^{\top}\in \mathbb{R}^l$, where $c_i\neq d_j$ for $i=1:m$ and $j=1:l$, with $0\leq l \leq n$, the matrix $M$ can be represented as follows:
	\begin{equation}\label{eq41}
		\begin{split}
			M &= \bmatrix{
				\frac{1}{c_1-d_1} & \frac{1}{c_1-d_2} & \cdots & \frac{1}{c_1-d_l} & 1& c_1 & c_1^2 &\cdots & c_1^{n-l-1} \\
				\frac{1}{c_2-d_1} & \frac{1}{c_2-d_2} & \cdots & \frac{1}{c_2-d_l} & 1& c_2 & c_2^2 &\cdots & c_2^{n-l-1}\\
				\vdots & \vdots & \vdots & \vdots &\vdots &\vdots &\vdots & \vdots & \vdots\\
				\frac{1}{c_m-d_1} & \frac{1}{c_m-d_2} & \cdots & \frac{1}{c_m-d_l} & 1& c_m & c_m^2 &\cdots & c_m^{n-l-1}
			}.
		\end{split}
	\end{equation}
 $M$ becomes the Vandermonde matrix when $l=0,$ and the Cauchy matrix when $l=n.$ 
\end{definition} 
For a CV matrix of the form $(\ref{eq41}),$  $\Psi_{\mathbb{CV}}:=[\{c_i\}_{i=1}^m,\{d_i\}_{i=1}^l]^{\top}\in \mathbb{R}^{m+l}$ represents its parameter set. We use the notation $M(\Psi_{\mathbb{CV}})$ to refer a CV matrix parameterized by $\Psi_{\mathbb{CV}}$.

For the M-P inverse and the MNLS solution involving the CV matrix, our objective is to estimate the structured  CNs. Lemma \ref{lm41} accomplishes our claim. Before that, we will construct the following matrices. For any positive integers $p,q$ and any vector $y=[y_1,y_2,\ldots , y_p]^{\top}\in \mathbb{R}^p,$   define the matrices 
\begin{equation*}
	\mathcal{Q}_{y,i}^{pq}:=\bmatrix{{\bf 1}, \ldots, {\bf 1}, y,{\bf 1}, \ldots,{\bf 1}}
	\in \mathbb{R}^{p\times q},
\end{equation*}
for $i=1: q,$ with the $i$-th column is $y$ and ${\bf 1}\in \R^p$  have all entries equal to 1.
Also, set 
\begin{align}\label{eq42}
    \mathcal{M}_1&:= \bmatrix{-M(\Psi_{\mathbb{CV}})(:,1:l)& {\bf 0}& M(\Psi_{\mathbb{CV}})(:,l+2:n)}\in \R^{m\times n},\\ \label{eq43}
&\mathcal{M}_2:=\bmatrix{M(\Psi_{\mathbb{CV}})(:,1:l) & {\bf 0}}\in \R^{m\times n},
\end{align}
where ${\bf 0} $ is the zero matrix with conformal dimensional.

The following lemma presents the derivative expressions of a CV matrix for the parameters in $\Psi_{\mathbb{CV}}.$ 
\begin{lemma}\label{lm41}
	Suppose $M(\Psi_{\mathbb{CV}})\in \R^{m\times n}$ having rank $r,$ represented by a set of real parameter $\Psi_{\mathbb{CV}}=\left[\{c_i\}_{i=1}^m,\{d_i\}_{i=1}^l\right]^{\top}\in \mathbb{R}^{m+l},$ where $c_i\neq d_j,$ $i=1:m$ and $j=1 :l.$  Then, each entry of $M(\Psi_{\mathbb{CV}})$ is a differentiable function of $\Psi_{\mathbb{CV}},$ and
 \begin{enumerate}
     \item $\frac{\partial M(\Psi_{\mathbb{CV}})}{\partial c_i} = e_i^{m}(\mathcal{M}_1 \odot(\mathcal{Q}^{nm}_{{\mathbf c}'_i,i})^{\top} )(i,:)$  for $i=1:m,$\label{itm:1b}
     \item $\frac{\partial M(\Psi_{\mathbb{CV}})}{\partial d_j} = (\mathcal{M}_2\odot \mathcal{Q}^{mn}_{\mathbf{d}'_j,j})(:,j)(e_j^n)^{\top}$  for $j=1:l,$\label{itm:2b}
 \end{enumerate}
	where 
 \begin{align*}
     {\bf c}'_i&:=\Big[\frac{1}{c_i-d_1}, \frac{1}{c_i-d_2},\ldots, \frac{1}{c_i-d_l}, 1, \frac{1}{c_i}, \frac{2}{c_i},\ldots, \frac{(n-l-1)}{c_i}\Big]^{\top}\in \mathbb{R}^n,\\
     &{\bf d}'_j:=\Big[\frac{1}{c_1-d_j}, \frac{1}{c_2-d_j},\ldots,\frac{1}{c_m-d_j}\Big]^{\top}\in \mathbb{R}^m,
 \end{align*}
	for $i=1:m$ and $j=1:l.$
\end{lemma}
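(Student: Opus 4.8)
The plan is to verify both identities by computing the partial derivatives entrywise and then recognizing the stated matrix formulas as a compact encoding of those scalar derivatives. First I would note that since $c_i\neq d_j$ for all admissible $i,j$, every entry of $M(\Psi_{\mathbb{CV}})$ is a differentiable function of $\Psi_{\mathbb{CV}}$: the Cauchy-block entries $\tfrac{1}{c_i-d_j}$ are rational with nonvanishing denominator, the $(l+1)$-st column is constant, and the Vandermonde-block entries $c_i^{\,k}$ are polynomials. The key structural observation driving the whole argument is the sparsity of the dependence: the parameter $c_i$ enters only the $i$-th \emph{row} of $M$, while $d_j$ enters only the $j$-th \emph{column} (and only within the Cauchy block). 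Hence $\frac{\partial M}{\partial c_i}$ is supported on row $i$ and $\frac{\partial M}{\partial d_j}$ on column $j$, which already dictates the outer-product forms $e_i^m(\cdots)(i,:)$ and $(\cdots)(:,j)(e_j^n)^{\top}$ appearing in the statement.

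For the first identity I would differentiate the $i$-th row blockwise: $\frac{\partial}{\partial c_i}\frac{1}{c_i-d_j}=-\frac{1}{(c_i-d_j)^2}$, $\frac{\partial}{\partial c_i}1=0$, and $\frac{\partial}{\partial c_i}c_i^{\,k}=k\,c_i^{\,k-1}$ for $k=1:n-l-1$. The crucial step is to rewrite each of these as the \emph{original} matrix entry times a scalar factor: $-\frac{1}{(c_i-d_j)^2}=\bigl(-\frac{1}{c_i-d_j}\bigr)\cdot\frac{1}{c_i-d_j}$, $0=0\cdot 1$, and $k\,c_i^{\,k-1}=c_i^{\,k}\cdot\frac{k}{c_i}$. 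The sign change on the Cauchy block together with the annihilation of the constant column are precisely what distinguish $\mathcal{M}_1$ from $M(\Psi_{\mathbb{CV}})$ in \eqref{eq42}, whereas the multiplicative factors $\frac{1}{c_i-d_j}$, $1$, $\frac{k}{c_i}$ are exactly the entries of ${\bf c}'_i$. Thus the $i$-th row of $\mathcal{M}_1\odot(\mathcal{Q}^{nm}_{{\bf c}'_i,i})^{\top}$ (whose $i$-th row carries $({\bf c}'_i)^{\top}$, the other rows being irrelevant since we extract row $i$) reproduces the computed derivative row; left-multiplying by $e_i^m$ places it into row $i$, yielding the claimed formula.

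The second identity follows by the same mechanism restricted to a single column. Since $d_j$ occurs only in the Cauchy entries $\frac{1}{c_i-d_j}$ of column $j$, I compute $\frac{\partial}{\partial d_j}\frac{1}{c_i-d_j}=\frac{1}{(c_i-d_j)^2}=\frac{1}{c_i-d_j}\cdot\frac{1}{c_i-d_j}$, which factors as the original entry times the $i$-th entry of ${\bf d}'_j$. Keeping only the Cauchy block and zeroing the remaining columns is exactly the role of $\mathcal{M}_2$ in \eqref{eq43}, and $\mathcal{Q}^{mn}_{{\bf d}'_j,j}$ inserts the vector ${\bf d}'_j$ into its $j$-th column. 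Consequently the $j$-th column of $\mathcal{M}_2\odot\mathcal{Q}^{mn}_{{\bf d}'_j,j}$ equals the computed derivative column, and right-multiplying by $(e_j^n)^{\top}$ places it into column $j$.

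I do not expect any genuine analytic obstacle; the derivatives themselves are elementary. The only point demanding care is the bookkeeping across the three blocks---ensuring that the index ranges ($1:l$ for the Cauchy part, $l+1$ for the constant column, $l+2:n$ for the Vandermonde part), the sign carried by $\mathcal{M}_1$, and the column/row selection effected by $\mathcal{Q}^{nm}_{{\bf c}'_i,i}$ and $\mathcal{Q}^{mn}_{{\bf d}'_j,j}$ all align so that the Hadamard products pick out the correct factors. Once the scalar factorizations above are in hand, each identity reduces to a direct entry-by-entry comparison, which I would record compactly rather than expand in full.
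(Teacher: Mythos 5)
Your proposal is correct and follows essentially the same route as the paper's proof: you compute the partial derivatives blockwise (exploiting that $c_i$ enters only row $i$ and $d_j$ only column $j$ of the Cauchy block), factor each derivative as the corresponding entry of $\mathcal{M}_1$ or $\mathcal{M}_2$ times the matching entry of ${\bf c}'_i$ or ${\bf d}'_j$, and then read off the Hadamard-product/outer-product encoding. The paper does exactly this, writing out the derivative matrix $\frac{\partial M(\Psi_{\mathbb{CV}})}{\partial c_i}$ explicitly and then identifying it with $e_i^m(\mathcal{M}_1\odot(\mathcal{Q}^{nm}_{{\bf c}'_i,i})^{\top})(i,:)$, and treating the $d_j$ case symmetrically.
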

\renewcommand\qedsymbol{$\blacksquare$}
\begin{proof} By observing that, when $c_i\neq d_j,$ where $i=1:m$ and $j=1: l,$ partial derivatives corresponding to the parameters $\{c_i\}_{i=1}^m$ will be
	$$\frac{\pr M(\Psi_{\mathbb{CV}})}{\pr c_i}=\bmatrix{
		0 & \cdots & 0 & 0& 0 & 0 &\cdots & 0 \\
		\vdots &  & \vdots &\vdots &\vdots &\vdots & & \vdots \\
		0 & \cdots & 0 &0 & 0&0 &\cdots & 0 \\
		\frac{-1}{(c_i-d_1)^2} & \cdots & \frac{-1}{(c_i-d_l)^2}& 0 & 1& 2c_i &\cdots & (n-l-1)c_i^{n-l-2}\\ 
		0 & \cdots& 0 & 0 & 0& 0& \cdots & 0 \\   
		\vdots &  & \vdots &\vdots &\vdots& \vdots& & \vdots \\
		0 & \cdots & 0 &0 & 0 & 0& \cdots & 0}
	.$$
	Now, on the right-hand side of the above, using Hadamard product with the matrix $(\mathcal{Q}^{nm}_{\mathbf{c}'_i, i})^{\top},$ we get
	\begin{align*}
		\frac{\pr M(\Psi_{\mathbb{CV}})}{\pr c_i}&= \Big( e_i^m\bmatrix{-M(\Psi_{\mathbb{CV}})(i,1:l)& 0 &M(\Psi_{\mathbb{CV}})(i,l+2:n)}\Big)\odot (\mathcal{Q}^{nm}_{\mathbf{c}'_i,i})^{\top}\\
		&=\Big(e_i^m\mathcal{M}_1(i,:)\Big)\odot  (\mathcal{Q}^{nm}_{{\bf c}'_i,i})^{\top}=e_i^{m}(\mathcal{M}_1\odot (\mathcal{Q}^{nm}_{{\bf c}'_i,i})^{\top})(i,:).
	\end{align*}
	Hence, proof of the first statement follows.
	
	In a similar argument, we can prove the second part of the statement.
\end{proof}
For the structured  CNs,  computationally feasible upper bounds are provided in the following theorem for $M^{\dagger}(\Psi_{\mathbb{CV}})$ addressed in Definition \ref{def31}.
\begin{theorem}\label{Th41}
	Suppose $M(\Psi_{\mathbb{CV}})\in \mathbb{R}^{m\times n}$ with $\rank(M(\Psi_{\mathbb{CV}}))=r.$ Then
	\begin{eqnarray*}
		\tilde{\M}^{\dagger}(M(\Psi_{\mathbb{CV}}))=\frac{\|  \mathcal{X}^{\dagger}_{\mathbb{CV}}\|_{\max}}{\|M^{\dagger}\|_{\max}} \quad \mbox{and}\quad
		\tilde{\C}^{\dagger}(M(\Psi_{\mathbb{CV}}))=\left\| \frac{\mathcal{X}^{\dagger}_{\mathbb{CV}}}{M^{\dagger}}\right\|_{\max},
	\end{eqnarray*}
	where \begin{align*}
\mathcal{X}^{\dagger}_{\mathbb{CV}}=&|M^{\dagger}||\Theta_{c}||(\mathcal{M}_1\odot \mathcal{Q} )M^{\dagger}|+ |M^{\dagger}{M^{\dagger}}^{\top}(\mathcal{M}_1\odot \mathcal{Q} )^{\top}||\Theta_{c}||\mathbf{E}_M|
		\\
		&+|\mathbf{F}_M(\mathcal{M}_1\odot \mathcal{Q} )^{\top}||\Theta_{c}||{M^{\dagger}}^{\top}M^{\dagger}|+|M^{\dagger}(\mathcal{M}_2\odot \mathcal{M}_2)||\Theta_{d'}||M^{\dagger}|\\
		&+|M^{\dagger}{M^{\dagger}}^{\top}| |\Theta_{d'}||(\mathcal{M}_2\odot \mathcal{M}_2)^{\top}\mathbf{E}_M|+|\mathbf{F}_M||\Theta_{d'}||(\mathcal{M}_2\odot \mathcal{M}_2)^{\top}{M^{\dagger}}^{\top}M^{\dagger}|,
  \end{align*}
 $d'=[d_1,\ldots, d_l,0,\ldots,0]^{\top}\in \R^n,$ $\mathcal{Q}=[{{\bf c}'_1}^{\top},{{\bf c}'_2}^{\top},\ldots, {{\bf c}'}_m^{\top}]^{\top}\in\R^{m\times n}$ and $\mathcal{M}_1,\mathcal{M}_2$ as defined in $(\ref{eq42})$ and $(\ref{eq43}),$ respectively. 
\end{theorem}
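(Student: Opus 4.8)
The plan is to specialize the general upper bound of Theorem \ref{Th31} to the parameter set $\Psi_{\mathbb{CV}} = [\{c_i\}_{i=1}^m, \{d_j\}_{j=1}^l]^{\top}$, so that the index $k$ in the sum defining $\mathcal{X}^{\dagger}_{\Psi}$ runs first over the $m$ parameters $c_i$ and then over the $l$ parameters $d_j$. Accordingly I would split $\mathcal{X}^{\dagger}_{\Psi}$ into two groups, $\sum_{i=1}^m(\cdots)|c_i|$ and $\sum_{j=1}^l(\cdots)|d_j|$, and in each group substitute the derivative formulas from Lemma \ref{lm41}. The decisive structural feature is that $\partial M(\Psi_{\mathbb{CV}})/\partial c_i$ is supported on a single row (it equals $e_i^m$ times a row vector) while $\partial M(\Psi_{\mathbb{CV}})/\partial d_j$ is supported on a single column (it equals a column vector times $(e_j^n)^{\top}$). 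This rank-one form collapses each of the three matrix products appearing in $\mathcal{X}^{\dagger}_{\Psi}$ into an outer product of a column and a row.

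Concretely, for the $c_i$-terms I would use $M^{\dagger}\frac{\partial M}{\partial c_i}M^{\dagger} = M^{\dagger}(:,i)\,(\mathcal{M}_1\odot\mathcal{Q})(i,:)\,M^{\dagger}$, and likewise transpose $\partial M/\partial c_i$ in the $\E_M$ and $\F_M$ terms, so that each reduces to a column of the form $M^{\dagger}{M^{\dagger}}^{\top}(\mathcal{M}_1\odot\mathcal{Q})^{\top}(:,i)$ or $\F_M(\mathcal{M}_1\odot\mathcal{Q})^{\top}(:,i)$ paired with the $i$-th row of $\E_M$ or ${M^{\dagger}}^{\top}M^{\dagger}$; the absolute values then factor through \eqref{Eq35}, $|ab^{\top}| = |a|\,|b^{\top}|$. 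For the $d_j$-terms the analogous reduction produces the column $(\mathcal{M}_2\odot\mathcal{M}_2)(:,j)$, after recording the elementary identity $\mathcal{M}_2(:,j)\odot{\bf d}'_j = (\mathcal{M}_2\odot\mathcal{M}_2)(:,j)$ for $j = 1:l$ (both sides have entries $1/(c_i - d_j)^2$), which is precisely what converts the per-parameter factor ${\bf d}'_j$ into the global matrix $\mathcal{M}_2\odot\mathcal{M}_2$ appearing in $\mathcal{X}^{\dagger}_{\mathbb{CV}}$.

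The final step is to recognize each resulting index sum as a single matrix product via the column-times-row expansion $AB = \sum_k A(:,k)\,B(k,:)$, with the scalars $|c_i|$ and $|d_j|$ absorbed into the diagonal matrices $|\Theta_c|$ and $|\Theta_{d'}|$; here the padding zeros in $d' = [d_1,\ldots,d_l,0,\ldots,0]^{\top}$ automatically restrict the $d$-sum to $j = 1:l$. Summing the six reassembled blocks yields exactly $\mathcal{X}^{\dagger}_{\mathbb{CV}}$, and substituting $\mathcal{X}^{\dagger}_{\Psi} = \mathcal{X}^{\dagger}_{\mathbb{CV}}$ into Theorem \ref{Th31} gives both claimed formulas. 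I expect the main obstacle to be purely organizational: keeping straight, in each of the six terms, which factor becomes a column index and which becomes a row index after the rank-one collapse, and verifying that the transposes and Hadamard products align so that the per-index outer products genuinely reassemble into the stated products $|M^{\dagger}||\Theta_c||(\mathcal{M}_1\odot\mathcal{Q})M^{\dagger}|$ and its five companions rather than some mismatched ordering.
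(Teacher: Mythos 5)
Your proposal follows essentially the same route as the paper's proof: specialize Theorem \ref{Th31} to $\Psi_{\mathbb{CV}}$, split the sum into the $c$- and $d$-parameter groups, exploit the rank-one row/column support of the derivatives from Lemma \ref{lm41} together with $|ab^{\top}|=|a||b^{\top}|$, and reassemble the index sums into the six matrix products via the column-times-row expansion with the scalars absorbed into $|\Theta_c|$ and $|\Theta_{d'}|$. The identity $\mathcal{M}_2(:,j)\odot{\bf d}'_j=(\mathcal{M}_2\odot\mathcal{M}_2)(:,j)$ that you record explicitly is left implicit in the paper, but the argument is the same.
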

\renewcommand\qedsymbol{$\blacksquare$}
\begin{proof}
	For deriving the desired expressions for $\tilde{\M}^{\dagger}(M(\Psi_{\mathbb{CV}}))$ and $\tilde{\C}^{\dagger}(M(\Psi_{\mathbb{CV}})),$  we calculate the contribution of each parameter subset to the expressions outlined in Theorem   \ref{Th31}. 
 For the parameters $\{c_i\}_{i=1}^l,$ using \eqref{itm:1b} of Lemma \ref{lm41} we get:
	\begin{align}\label{eq44}
		\nonumber	\mathcal{E}_c:&=\sum_{i=1}^{m}\Big(\vr M^{\dagger}\frac{\partial M(\Psi_{\mathbb{CV}})}{\partial c_i}M^{\dagger}\vr +\vr {M^{\dagger}}^{\top} M^{\dagger}\Bigl(\frac{\partial M(\Psi_{\mathbb{CV}})}{\partial c_i}\Bigr)^{\top} \E_M \Bigr\vert  \\ \nonumber
		& \quad +\vr \F_M\Big(\frac{\pr M(\Psi_{\mathbb{CV}})}{\pr c_i}\Big)^{\top}{M^{\dagger}}^{\top} M^{\dagger}\vr\Big) |c_i|\\ \nonumber
		&=\sum_{i=1}^m\Big(\vr M^{\dagger}e_i^{m}((\mathcal{Q}^{nm}_{{\bf c}'_i,i})^{\top}\odot \mathcal{M}_1)(i,:)M^{\dagger}\vr +\vr M^{\dagger}{M^{\dagger}}^{\top}\Big(e_i^{m}( (\mathcal{Q}^{nm}_{{\bf c}'_i,i})^{\top}\odot \mathcal{M}_1)(i,:)\Big)^{\top}\E_M \vr\\ 
		&\quad +\vr \F_M \Big(e_i^{m}( (\mathcal{Q}^{nm}_{{\bf c}'_i,i})^{\top}\odot \mathcal{M}_1)(i,:)\Big)^{\top}{M^{\dagger}}^{\top}M^{\dagger}\vr \Big)|c_i|.
	\end{align}
	Using $(\ref{Eq35})$ in $(\ref{eq44}),$ we get
	\begin{align*}
		\mathcal{E}_c&=\sum_{i=1}^m\Big(|M^{\dagger}(:,i)| |c_i| |( (\mathcal{Q}^{nm}_{{\bf c}'_i,i})^{\top}\odot \mathcal{M}_1)(i,:)M^{\dagger}| + |M^{\dagger}{M^{\dagger}}^{\top}(\mathcal{Q}^{nm}_{{\bf c}'_i,i})^{\top}\odot \mathcal{M}_1)^{\top})^{\top}(i,:)| |c_i||\E_M(i,:)|\\
		&\quad+|\F_M (\mathcal{Q}^{nm}_{{\bf c}'_i,i})^{\top}\odot \mathcal{M}_1)^{\top}(i,:)||c_i| |{M^{\dagger}}^{\top}M^{\dagger}(i,:)|\Big)\\
		&=|M^{\dagger}||\Theta_{c}||(\mathcal{M}_1\odot \mathcal{Q})M^{\dagger}|+|M^{\dagger}{M^{\dagger}}^{\top} (\mathcal{M}_1\odot \mathcal{Q})^{\top}||\Theta_{c}||\E_M|+|\F_M(\mathcal{M}_1\odot \mathcal{Q})^{\top}||\Theta_c||{M^{\dagger}}^{\top}M^{\dagger}|.
	\end{align*}
	Analogously, for the parameters $\{d_i\}_{i=1}^l,$ using \eqref{itm:2b} of Lemma \ref{lm41}, we have
	\begin{align*}
		\mathcal{E}_d&:=\sum_{i=1}^{l}\Big(\vr M^{\dagger}\frac{\partial M(\Psi_{\mathbb{CV}})}{\partial d_i}M^{\dagger}\vr +\vr {M^{\dagger}}^{\top} M^{\dagger}\Bigl(\frac{\partial M(\Psi_{\mathbb{CV}})}{\partial d_i}\Bigr)^{\top} \mathbf{E}_M \Bigr\vert\\
		& \quad+\vr \F_M \Big(\frac{\pr M(\Psi_{\mathbb{CV}})}{\pr d_i}\Big)^{\top}{M^{\dagger}}^{\top} M^{\dagger}\vr\Big) |d_i|\\
		&=|M^{\dagger}(\mathcal{M}_2\odot \mathcal{M}_2)||\Theta_{d'}||M^{\dagger}|+|M^{\dagger}{M^{\dagger}}^{\top}||\Theta_{d'}||(\mathcal{M}_2\odot \mathcal{M}_2)^{\top}\E_M|\\
		&\quad +|\F_M||\Theta_{d'}||(\mathcal{M}_2\odot \mathcal{M}_2)^{\top} {M^{\dagger}}^{\top}M^{\dagger}|.
	\end{align*}
	Applying Theorem $\ref{Th31}$ yields $$\tilde{\M}^{\dagger}(M(\Psi_{\mathbb{CV}}))=\frac{\|\mathcal{E}_d+\mathcal{E}_c\|_{\max}}{\|M^{\dagger}\|_{\max}} \quad \mbox{and}\quad \tilde{\C}^{\dagger}(M(\Psi_{\mathbb{CV}}))=\Vr \frac{\mathcal{E}_d+\mathcal{E}_c}{M^{\dagger}}\Vr_{\max}.$$
	Hence, the proof is completed.
\end{proof}
 We now employ Theorem \ref{Th32} to deduce the explicit formulations for structured CNs of $M^{\dagger}(\Psi_{\mathbb{CV}})$ by considering $M(\Psi)$ has full column rank, which is presented next.
\begin{theorem}\label{Th42}
	Suppose $M(\Psi_{\mathbb{CV}})\in \mathbb{R}^{m\times n}$ has full column rank. Then,
	\begin{eqnarray*}
		\M^{\dagger}(M(\Psi_{\mathbb{CV}})) =\frac{\Vert \Hat{\mathcal{X}}^{\dagger}_{\mathbb{CV}}\Vert_{\max}}{\|M^{\dagger}\|_{\max}}\quad \mbox{and}\quad
		\C^{\dagger}(M(\Psi_{\mathbb{CV}})) =
		\left\| \frac{\Hat{\mathcal{X}}^{\dagger}_{\mathbb{CV}}}{M^{\dagger}}\right\|_{\max},
	\end{eqnarray*}
	where 
 \begin{align*}
\Hat{\mathcal{X}}^{\dagger}_{\mathbb{CV}}=&\sum_{i=1}^{m}\Bigl\vert M^{\dagger}E_{ii}^{mm}(\mathcal{M}_1\odot \mathcal{Q})M^{\dagger} -(M^{\top}M)^{-1}(E_{ii}^{mm}(\mathcal{M}_1\odot \mathcal{Q}))^{\top} \E_M \Bigr\vert | c_i|\\
		&+\sum_{i=1}^{l}\Bigl\vert M^{\dagger}(\mathcal{M}_2\odot\mathcal{M}_2)E_{jj}^{nn}M^{\dagger} -(M^{\top}M)^{-1}((\mathcal{M}_2\odot\mathcal{M}_2)E_{jj}^{nn})^{\top} \E_M \Bigr\vert | d_i|,
 \end{align*}
 $E_{ij}^{mn}=e_i^m(e_j^{n})^{\top}$ and $\mathcal{Q}$ is as defined in Theorem \ref{Th41}.
\end{theorem}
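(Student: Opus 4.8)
The plan is to specialize the full column rank formulas of Theorem~\ref{Th32} to the Cauchy--Vandermonde matrix $M(\Psi_{\mathbb{CV}})$, whose parameter set is $\Psi_{\mathbb{CV}}=[\{c_i\}_{i=1}^m,\{d_i\}_{i=1}^l]^{\top}$ with $p=m+l$. Since $M(\Psi_{\mathbb{CV}})$ has full column rank, we have $\F_M={\bf 0}$ and $M^{\dagger}{M^{\dagger}}^{\top}=(M^{\top}M)^{-1}$, so Theorem~\ref{Th32} applies verbatim and the entire task reduces to rewriting its generic parameter sum $\Hat{\mathcal{X}}^{\dagger}_{\Psi}$ into the claimed compact double sum by substituting the derivative expressions of Lemma~\ref{lm41}.

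First I would convert each partial derivative into a ``global'' form in which the parameter-dependent auxiliary matrices $\mathcal{Q}^{nm}_{{\bf c}'_i,i}$ and ${\bf d}'_j$ are replaced by the fixed matrices $\mathcal{Q}$ and $\mathcal{M}_2\odot\mathcal{M}_2$. For the parameters $\{c_i\}_{i=1}^m$, the $i$-th row of $(\mathcal{Q}^{nm}_{{\bf c}'_i,i})^{\top}$ equals $({\bf c}'_i)^{\top}$, which is precisely the $i$-th row of $\mathcal{Q}$; hence the $i$-th rows of $\mathcal{M}_1\odot(\mathcal{Q}^{nm}_{{\bf c}'_i,i})^{\top}$ and of $\mathcal{M}_1\odot\mathcal{Q}$ agree, and \eqref{itm:1b} of Lemma~\ref{lm41} becomes
\begin{align*}
\frac{\partial M(\Psi_{\mathbb{CV}})}{\partial c_i}=e_i^m(\mathcal{M}_1\odot\mathcal{Q})(i,:)=E_{ii}^{mm}(\mathcal{M}_1\odot\mathcal{Q}).
\end{align*}
Similarly, since $\mathcal{M}_2(:,j)=M(\Psi_{\mathbb{CV}})(:,j)={\bf d}'_j$ for $j=1:l$, the $j$-th column of $\mathcal{M}_2\odot\mathcal{Q}^{mn}_{{\bf d}'_j,j}$ equals ${\bf d}'_j\odot{\bf d}'_j$, which is the $j$-th column of $\mathcal{M}_2\odot\mathcal{M}_2$; thus \eqref{itm:2b} of Lemma~\ref{lm41} becomes
\begin{align*}
\frac{\partial M(\Psi_{\mathbb{CV}})}{\partial d_j}=(\mathcal{M}_2\odot\mathcal{M}_2)(:,j)(e_j^n)^{\top}=(\mathcal{M}_2\odot\mathcal{M}_2)E_{jj}^{nn}.
\end{align*}

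With these two identities, I would substitute into $\Hat{\mathcal{X}}^{\dagger}_{\Psi}$ and split the sum over the $p=m+l$ parameters into the $c$-block (with magnitudes $|c_i|$) and the $d$-block (with magnitudes $|d_j|$), which delivers exactly the two sums stated in the theorem; the formulas for $\M^{\dagger}(M(\Psi_{\mathbb{CV}}))$ and $\C^{\dagger}(M(\Psi_{\mathbb{CV}}))$ then follow immediately from Theorem~\ref{Th32}. The only delicate point, and the main (if modest) obstacle, is the index bookkeeping in the reduction above: one must verify that the ``ones'' padding in $\mathcal{Q}^{nm}_{{\bf c}'_i,i}$ and $\mathcal{Q}^{mn}_{{\bf d}'_j,j}$ is annihilated once the rank-one derivative is localized to a single row (via left multiplication by $e_i^m$) or a single column (via right multiplication by $(e_j^n)^{\top}$), so that only the distinguished vector ${\bf c}'_i$ or ${\bf d}'_j$ contributes. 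After this verification the claim is obtained by direct substitution, with no further estimation required.
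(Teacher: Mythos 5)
Your proposal is correct and follows essentially the same route as the paper: it specializes Theorem \ref{Th32} to the parameter set $\Psi_{\mathbb{CV}}$, substitutes the derivative formulas of Lemma \ref{lm41}, and verifies the identities $\frac{\partial M(\Psi_{\mathbb{CV}})}{\partial c_i}=E_{ii}^{mm}(\mathcal{M}_1\odot\mathcal{Q})$ and $\frac{\partial M(\Psi_{\mathbb{CV}})}{\partial d_j}=(\mathcal{M}_2\odot\mathcal{M}_2)E_{jj}^{nn}$, exactly as in the paper's computation of $\mathcal{E}'_c$ and $\mathcal{E}'_d$. The index bookkeeping you flag (that the ones-padding in $\mathcal{Q}^{nm}_{{\bf c}'_i,i}$ and $\mathcal{Q}^{mn}_{{\bf d}'_j,j}$ is killed once the derivative is localized to row $i$ or column $j$) is the only nontrivial check, and you have carried it out correctly.
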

\begin{proof} 
	To employ the expressions given in Theorem \ref{Th32}, we need to compute the contribution for each subset of parameters in an analogous method to the proof of Theorem \ref{Th41}.  For the parameters $\{c_i\}_{i=1}^m,$ using \eqref{itm:1b} of Lemma \ref{lm41}, we have
	\begin{align*}
		\mathcal{E}'_c&:=\sum_{i=1}^{m}\Bigl\vert M^{\dagger}\frac{\partial M(\Psi_{\mathbb{CV}})}{\partial c_i}M^{\dagger} -(M^{\top}M)^{-1}\Bigl(\frac{\partial M(\Psi_{\mathbb{CV}})}{\partial c_i}\Bigr)^{\top}\E_M \Bigr\vert | c_i|\\
		&=\sum_{i=1}^{m}\Bigl\vert M^{\dagger}\Bigl(e_i^m(\mathcal{M}_1\odot (\mathcal{Q}^{nm}_{{\bf c}'_i,i})^{\top})(i,:)\Bigr)M^{\dagger}-(M^{\top}M)^{-1}\Bigl(e_i^m(\mathcal{M}_1\odot (\mathcal{Q}^{nm}_{{\bf c}'_i,i})^{\top})(i,:)\Bigr)^{\top} \E_M \Bigr\vert | c_i|\\
		&=\sum_{i=1}^{m}\Bigl\vert M^{\dagger}E_{ii}^{mm}(\mathcal{M}_1\odot \mathcal{Q})M^{\dagger} -(M^{\top}M)^{-1}(E_{ii}^{mm}(\mathcal{M}_1\odot \mathcal{Q}))^{\top} \E_M \Bigr\vert | c_i|.
	\end{align*}
	Similarly, for the parameters $\{d_i\}_{i=1}^l, $ using \eqref{itm:2b} of Lemma \ref{lm41}, we get
	\begin{align*}		
		\mathcal{E}'_d:&=\sum_{i=1}^{l}\Bigl\vert M^{\dagger}\frac{\partial M(\Psi_{\mathbb{CV}})}{\partial d_i}M^{\dagger} -(M^{\top}M)^{-1}\Bigl(\frac{\partial M(\Psi_{\mathbb{CV}})}{\partial d_i}\Bigr)^{\top} \E_M \Bigr\vert | d_i|\\
		&=\sum_{i=1}^{l}\Bigl\vert M^{\dagger}(\mathcal{M}_2\odot\mathcal{M}_2)E_{jj}^{nn}M^{\dagger} -(M^{\top}M)^{-1}((\mathcal{M}_2\odot\mathcal{M}_2)E_{jj}^{nn})^{\top} \E_M \Bigr\vert | d_i|.
	\end{align*}
	From Theorem \ref{Th32}, we have
	$$ \M^{\dagger}(M(\Psi_{\mathbb{CV}}))=\frac{\Vert \mathcal{E}'_c+\mathcal{E}'_d \Vert_{\max}}{\Vert M^{\dagger}\Vert_{\max}} \quad \mbox{and}\quad  \C^{\dagger}(M(\Psi_{\mathbb{CV}}))=\Bigl\Vert \frac{ \mathcal{E}'_c+\mathcal{E}'_d}{M^{\dagger}} \Bigr\Vert_{\max},$$
	and hence, our proof is completed.
\end{proof}
\begin{remark}
    If we consider $l=0$ or $l=n$ in the preceding results, we can calculate the structured CNs for the M-P inverse Vandermonde and Cauchy matrices, respectively.
\end{remark}
Next, we consider the LS problem $(\ref{eqn312})$ corresponding to a rank deficient CV matrix.  Using the expressions given in  Theorem $\ref{Th33},$ we deduce upper bounds for structured CNs of $\x,$ presented next.
\begin{theorem}\label{Th43}
	Suppose $M(\Psi_{\mathbb{CV}})\in \mathbb{R}^{m\times n}$ with rank $r$  and $b\in \mathbb{R}^{m}.$ Set $\r:=b-M(\Psi_{\mathbb{CV}})\x$. Then,
\begin{eqnarray*}
\tilde{\M}^{\dagger}\big(M(\Psi_{\mathbb{CV}}),b\big)=&\frac{\|\mathcal{X}^{ls}_{\mathbb{CV}}\|_{\infty}}{\|\x\|_{\infty}}\quad \mbox{and}\quad \tilde{\C}^{\dagger}\big(M(\Psi_{\mathbb{CV}},b)\big)=\Vr \Theta_{\x^{\ddagger}} \mathcal{X}^{ls}_{\mathbb{CV}}\Vr_{\infty},
	\end{eqnarray*}
 where \begin{align*}
\mathcal{X}^{ls}_{\mathbb{CV}}=&|M^{\dagger}||b|+|M^{\dagger}||\Theta_c||(\mathcal{M}_1\odot \mathcal{Q})\x|+|M^{\dagger}{M^{\dagger}}^{\top}(\mathcal{M}_1\odot \mathcal{Q})^{\top}||\Theta_c||\r|\\
		&+|\F_M (\mathcal{M}_1\odot \mathcal{Q})^{\top}||\Theta_c||{M^{\dagger}}^{\top}\x|+|M^{\dagger}(\mathcal{M}_2\odot \mathcal{M}_2)||\Theta_{d'}||\x|\\
		&+|M^{\dagger}{M^{\dagger}}^{\top}||\Theta_{d'}||\mathcal{M}_2\odot \mathcal{M}_2)^{\top}\r|+|\F_M ||\Theta_{d'}||(\mathcal{M}_2\odot \mathcal{M}_2)^{\top} {M^{\dagger}}^{\top}\x|.
 \end{align*}
\end{theorem}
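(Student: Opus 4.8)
The plan is to specialize the general upper bound of Theorem \ref{Th33} to the CV parameterization, mirroring the proof of Theorem \ref{Th41} but carrying the vectors $\x$ and $\r$ (together with the extra right-hand-side term $|M^{\dagger}||b|$) through the computation. Recall that Theorem \ref{Th33} writes $\mathcal{X}^{ls}_{\Psi}$ as a sum over all parameters $\psi_k$ of three absolute-value blocks weighted by $|\psi_k|$, plus the term $|M^{\dagger}||b|$ arising from the perturbation $\D b$. Since $\Psi_{\mathbb{CV}}=[\{c_i\}_{i=1}^m,\{d_i\}_{i=1}^l]^{\top}$, I would split this sum into a $c$-contribution $\mathcal{E}^{ls}_c$ and a $d$-contribution $\mathcal{E}^{ls}_d$, handle each separately, and finally assemble $\mathcal{X}^{ls}_{\mathbb{CV}}=|M^{\dagger}||b|+\mathcal{E}^{ls}_c+\mathcal{E}^{ls}_d$ before substituting into the bounds of Theorem \ref{Th33}.

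For the $c$-block I would insert the derivative formula \eqref{itm:1b} of Lemma \ref{lm41} into each of the three summands $|M^{\dagger}\frac{\partial M}{\partial c_i}\x|$, $|M^{\dagger}{M^{\dagger}}^{\top}(\frac{\partial M}{\partial c_i})^{\top}\r|$ and $|\F_M(\frac{\partial M}{\partial c_i})^{\top}{M^{\dagger}}^{\top}\x|$. The key simplification is the rank-one factorization $\frac{\partial M}{\partial c_i}=e_i^m\,(\mathcal{M}_1\odot\mathcal{Q})(i,:)$, which follows from the observation (already used in the proof of Theorem \ref{Th41}) that the $i$-th row of $(\mathcal{Q}^{nm}_{\mathbf{c}'_i,i})^{\top}$ equals $\mathbf{c}'^{\top}_i$, i.e.\ the $i$-th row of $\mathcal{Q}$. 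Applying the identity $|ab^{\top}|=|a||b^{\top}|$ from \eqref{Eq35} then peels off the scalar factor $|c_i|$ and the relevant entry of $(\mathcal{M}_1\odot\mathcal{Q})\x$, of $\r$, or of ${M^{\dagger}}^{\top}\x$, so that summing over $i=1:m$ collapses the three blocks into the products appearing in $\mathcal{X}^{ls}_{\mathbb{CV}}$, with the diagonal matrix $\Theta_c$ absorbing the weights $|c_i|$.

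For the $d$-block I would proceed identically using \eqref{itm:2b}, writing $\frac{\partial M}{\partial d_i}=(\mathcal{M}_2\odot\mathcal{M}_2)(:,i)(e_i^n)^{\top}$. The one additional observation is that, for $j\le l$, the $j$-th column of $\mathcal{M}_2$ is exactly $\mathbf{d}'_j$, whence $\mathcal{M}_2(:,j)\odot\mathbf{d}'_j=(\mathcal{M}_2\odot\mathcal{M}_2)(:,j)$; this converts the Hadamard factor $\mathcal{Q}^{mn}_{\mathbf{d}'_j,j}$ into the square $\mathcal{M}_2\odot\mathcal{M}_2$. After factoring with \eqref{Eq35} and summing over $i=1:l$, the weights $|d_i|$ are gathered into $\Theta_{d'}$, where the padding by zeros in $d'=[d_1,\ldots,d_l,0,\ldots,0]^{\top}$ is harmless because the trailing columns of $\mathcal{M}_2\odot\mathcal{M}_2$ vanish. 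Adding $|M^{\dagger}||b|$, $\mathcal{E}^{ls}_c$ and $\mathcal{E}^{ls}_d$ reproduces $\mathcal{X}^{ls}_{\mathbb{CV}}$, and the claimed formulas for $\tilde{\M}^{\dagger}$ and $\tilde{\C}^{\dagger}$ follow directly from Theorem \ref{Th33}.

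I expect the only genuine obstacle to be the index bookkeeping: recognizing each $\frac{\partial M}{\partial \psi_k}$ as a rank-one outer product and tracking which factor becomes a column of $M^{\dagger}$ (respectively $\F_M$ or $M^{\dagger}{M^{\dagger}}^{\top}$) and which becomes an entry of a contracted vector such as $(\mathcal{M}_1\odot\mathcal{Q})\x$ or $(\mathcal{M}_2\odot\mathcal{M}_2)^{\top}\r$. Once the two column/row identities for $\mathcal{M}_1$ and $\mathcal{M}_2$ are established, the remainder is a routine transcription of the proof of Theorem \ref{Th41} with $\x$ and $\r$ inserted in place of the corresponding copies of $M^{\dagger}$.
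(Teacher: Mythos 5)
Your proposal is correct and follows exactly the route the paper intends: the paper omits this proof, stating only that it proceeds "in an analogous method to the proof of Theorem \ref{Th41}" by substituting the CV derivative formulas of Lemma \ref{lm41} into the general bound of Theorem \ref{Th33}, which is precisely what you carry out. Your two supporting observations --- that the $i$-th row of $(\mathcal{Q}^{nm}_{\mathbf{c}'_i,i})^{\top}$ is the $i$-th row of $\mathcal{Q}$, and that $\mathcal{M}_2(:,j)=\mathbf{d}'_j$ for $j\le l$ so the Hadamard factor becomes $\mathcal{M}_2\odot\mathcal{M}_2$ --- are the same identities used in the paper's proof of Theorem \ref{Th41}, and the rest is the routine application of $|ab^{\top}|=|a||b^{\top}|$ and the zero-padding in $\Theta_{d'}$, all of which you handle correctly.
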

\begin{proof}
	By evaluating in an analogous method to the proof of Theorem \ref{Th41}, for each subset of parameters using the expressions given in Theorem \ref{Th33}, the proof is followed. Hence, we omit it here.
\end{proof}
The structured CNs to the LS problem $(\ref{eqn312})$  corresponding to a full column rank CV matrix are stated next.
\begin{theorem}\label{Th44}
	Let $M(\Psi_{\mathbb{CV}})\in \mathbb{R}^{m\times n }$ and $b\in \mathbb{R}^{m}.$ Set $\r:=b-M\x$. Then,
	\begin{eqnarray*}
		\M^{\dagger}(M(\Psi_{\mathbb{CV}}),b) =\frac{\Vert\Hat{\mathcal{X}}^{ls}_{\mathbb{CV}}\Vert_{\infty}}{\|\x\|_{\infty}} \quad \mbox{and} \quad
		\C^{\dagger}(M(\Psi_{\mathbb{CV}}),b) =\Bigl\Vert \Theta_{\x^{\ddagger}}\Hat{\mathcal{X}}^{ls}_{\mathbb{CV}}\Bigr\Vert_{\infty},
	\end{eqnarray*}
	where \begin{align*}
\Hat{\mathcal{X}}^{ls}_{\mathbb{CV}}=&\sum_{i=1}^{m}\Bigl\vert M^{\dagger}E_{ii}^{mm}(\mathcal{M}_1\odot \mathcal{Q})\x -(M^{\top}M)^{-1}(E_{ii}^{mm}(\mathcal{M}_1\odot \mathcal{Q}))^{\top} \r \Bigr\vert | c_i|\\
		&+\sum_{i=1}^{l}\Bigl\vert M^{\dagger}(\mathcal{M}_2\odot\mathcal{M}_2)E_{jj}^{nn}\x -(M^{\top}M)^{-1}((\mathcal{M}_2\odot\mathcal{M}_2)E_{jj}^{nn})^{\top} \r+|M^{\dagger}||b|.
	\end{align*}
\end{theorem}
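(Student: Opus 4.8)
The plan is to specialize the exact full-column-rank formulae of Theorem \ref{Th34} to the CV parameterization and then collapse the resulting parameter sum into the stated compact matrix form by inserting the derivative expressions of Lemma \ref{lm41}. Since $M(\Psi_{\mathbb{CV}})$ has full column rank, Theorem \ref{Th34} applies verbatim with $\Psi=\Psi_{\mathbb{CV}}=[\{c_i\}_{i=1}^m,\{d_j\}_{j=1}^l]^{\top}$, so that
\[
\M^{\dagger}(M(\Psi_{\mathbb{CV}}),b)=\frac{\|\Hat{\mathcal{X}}^{ls}_{\Psi_{\mathbb{CV}}}\|_{\infty}}{\|\x\|_{\infty}},\qquad \C^{\dagger}(M(\Psi_{\mathbb{CV}}),b)=\bigl\|\Theta_{\x^{\ddagger}}\Hat{\mathcal{X}}^{ls}_{\Psi_{\mathbb{CV}}}\bigr\|_{\infty}.
\]
Hence it suffices to show that $\Hat{\mathcal{X}}^{ls}_{\Psi_{\mathbb{CV}}}$ equals the claimed $\Hat{\mathcal{X}}^{ls}_{\mathbb{CV}}$. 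I would split the parameter sum defining $\Hat{\mathcal{X}}^{ls}_{\Psi_{\mathbb{CV}}}$ into a block $\mathcal{E}''_c$ coming from $\{c_i\}_{i=1}^m$ and a block $\mathcal{E}''_d$ coming from $\{d_j\}_{j=1}^l$, keeping the data term $|M^{\dagger}||b|$ aside, exactly mirroring the bookkeeping already used in the proof of Theorem \ref{Th42} (this is the full-column-rank counterpart of Theorem \ref{Th43}, whose proof was omitted as analogous to Theorem \ref{Th41}).

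For the $c_i$-block I would substitute $\frac{\partial M(\Psi_{\mathbb{CV}})}{\partial c_i}=e_i^m(\mathcal{M}_1\odot(\mathcal{Q}^{nm}_{{\bf c}'_i,i})^{\top})(i,:)$ from item \eqref{itm:1b} of Lemma \ref{lm41} into the summand $\bigl|M^{\dagger}\frac{\partial M(\Psi_{\mathbb{CV}})}{\partial c_i}\x-(M^{\top}M)^{-1}\bigl(\frac{\partial M(\Psi_{\mathbb{CV}})}{\partial c_i}\bigr)^{\top}\r\bigr|\,|c_i|$. The key reduction is the Hadamard-selection identity $e_i^m(\mathcal{M}_1\odot(\mathcal{Q}^{nm}_{{\bf c}'_i,i})^{\top})(i,:)=E_{ii}^{mm}(\mathcal{M}_1\odot\mathcal{Q})$, valid because $(\mathcal{Q}^{nm}_{{\bf c}'_i,i})^{\top}$ carries ${\bf c}'^{\top}_i$ precisely in its $i$-th row and the rows of $\mathcal{Q}$ are exactly the ${\bf c}'^{\top}_i$; this collapses $\mathcal{E}''_c$ into $\sum_{i=1}^m|M^{\dagger}E_{ii}^{mm}(\mathcal{M}_1\odot\mathcal{Q})\x-(M^{\top}M)^{-1}(E_{ii}^{mm}(\mathcal{M}_1\odot\mathcal{Q}))^{\top}\r|\,|c_i|$. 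For the $d_j$-block I would insert $\frac{\partial M(\Psi_{\mathbb{CV}})}{\partial d_j}=(\mathcal{M}_2\odot\mathcal{Q}^{mn}_{{\bf d}'_j,j})(:,j)(e_j^n)^{\top}$ from item \eqref{itm:2b} and use $(\mathcal{M}_2\odot\mathcal{Q}^{mn}_{{\bf d}'_j,j})(:,j)(e_j^n)^{\top}=(\mathcal{M}_2\odot\mathcal{M}_2)E_{jj}^{nn}$, which holds because along the $j$-th column the derivative weight ${\bf d}'_j$ coincides entrywise with the Cauchy column $\mathcal{M}_2(:,j)$, so the Hadamard factor squares that column. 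This gives $\mathcal{E}''_d=\sum_{j=1}^l|M^{\dagger}(\mathcal{M}_2\odot\mathcal{M}_2)E_{jj}^{nn}\x-(M^{\top}M)^{-1}((\mathcal{M}_2\odot\mathcal{M}_2)E_{jj}^{nn})^{\top}\r|\,|d_j|$. Adding $|M^{\dagger}||b|$ and summing $\mathcal{E}''_c+\mathcal{E}''_d$ reproduces $\Hat{\mathcal{X}}^{ls}_{\mathbb{CV}}$; the CCN expression follows identically through the factor $\Theta_{\x^{\ddagger}}$.

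I expect the only genuine obstacle to be the careful verification of the two Hadamard-selection identities, in particular matching the row/column indexing of $\mathcal{Q}^{nm}_{{\bf c}'_i,i}$, $\mathcal{Q}^{mn}_{{\bf d}'_j,j}$, $\mathcal{Q}$, and $\mathcal{M}_2\odot\mathcal{M}_2$ against the transposed factors appearing in the $(M^{\top}M)^{-1}(\cdot)^{\top}\r$ terms, and confirming that no spurious sign survives the $d_j$-differentiation (the derivative $\partial_{d_j}(c_i-d_j)^{-1}=(c_i-d_j)^{-2}$ is positive, matching the squared column). Once these identities are in place — they are exactly the ones exploited in the proof of Theorem \ref{Th42} — the rest is routine index bookkeeping, so the argument can be presented briefly.
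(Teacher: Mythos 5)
Your proposal is correct and follows exactly the route the paper indicates for this result: specialize the exact full-column-rank formulae of Theorem \ref{Th34} to $\Psi_{\mathbb{CV}}$ and collapse the $c_i$- and $d_j$-contributions via the derivative expressions of Lemma \ref{lm41}, using the same two Hadamard-selection identities that appear in the proof of Theorem \ref{Th42} (the paper omits the proof of Theorem \ref{Th44} precisely because it is this argument). Your verification of the indexing and of the sign in the $d_j$-derivative matches the computations of $\mathcal{E}'_c$ and $\mathcal{E}'_d$ there, so nothing is missing.
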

\begin{proof}
	Since the proof follows in an analogous method to the proof of Theorem \ref{Th42}, by finding the contribution of each parameter set in the expressions of Theorem \ref{Th34}. Hence, we omit the proof. 
\end{proof}
\section{Quasiseparable (QS) matrices}\label{sec5}
The outset of this section begins with a quick introduction to QS matrices, which is a specific type of rank-structured matrices. Specifically, CNs are investigated for two important representations known as QS representation \cite{dopico2016structured, eidelman1999new} and GV representation \cite{vandebril2005note}. Upper bounds for the CNs of the M-P inverse and the MNLS solution are obtained corresponding to QS representation in Subsection \ref{ss51} and for the GV representation in Subsection  \ref{ss52}. The relationship between different CNs is also investigated in Subsection \ref{ss53}.

For the first time, QS matrices were investigated in \cite{eidelman1999new}. In this work, we focus solely on considering $\{1,1\}$-QS matrices, which is a special case of QS matrices.  Let $M$ be in $\R^{n\times n}.$ If every submatrix of $M$ completely contained in the strictly lower triangular (resp. upper triangular) part is of rank at most $1$ (resp. $1$), and there is at least one of these submatrices has rank equal to $1$ (resp. $1$), then $M$ is called a $\{1,1\}$-QS matrix. Equivalently, we can write: ${\dm{\max_{i}}\, \rank}(M(i+1:n,1:i))=1$ and ${\dm{\max_{i}}\, \rank}(M(1:i,i+1:n))=1.$ 

\subsection{CNs corresponding to QS representation}\label{ss51}
In \cite{dopico2016structured, eidelman1999new}, the notion of QS representation was proposed for $\{1,1\}$-QS matrices. In this subsection, we first recall this representation and then discuss the structured MCN and CCN.
\begin{definition}\label{def51}\cite{dopico2016structured, eidelman1999new}
	A matrix $M\in \mathbb{R}^{n\times n}$ is classified to be a $\{1,1\}$-QS matrix if it can be parameterized by the following set of $7n-8$ real parameters
	\begin{equation}\label{eq51}
		\Psi_{\mathbb{QS}}=\Bigl[\{{\bf a_i}\}_{i=2}^n, \{{\bf e_i}\}_{i=2}^{n-1},\{{\bf b_i}\}_{i=1}^{n-1}, \{{\bf d_i}\}_{i=1}^n, \{\f_i\}_{i=1}^{n-1}, \{\g_i\}_{i=2}^{n-1}, \{{\bf{h_i}}\}_{i=2}^n\Bigr]^{\top}\in \R^{7n-8},
	\end{equation}
	as follows,
	\begin{equation*}\label{eq52}
		M=\bmatrix{
			{\bf d} _1& \f_1\h_2 & \f_1\g_2\h_3& \cdots & \f_1\g_2\cdots \g_{n-1}\h_n\\
			\a_2 \b_1 & {\bf d}_2& \f_2\h_3 & \cdots & \f_2\g_3\cdots \g_{n-1}\h_n\\
			\a_3\e_2\b_1 & \a_3\b_2 & {\bf d}_3 & \cdots & \f_3\g_4\cdots \g_{n-1}\h_n\\
			\a_4\e_3\e_2\b_1 & \a_4\e_3\b_2 & \a_4\b_3 & \cdots & \f_4\g_5\cdots \g_{n-1}\h_n\\
			\vdots & \vdots & \vdots & \ddots & \vdots\\
			\a_n\e_{n-1}\ldots \e_2\b_1 & \a_n\e_{n-1}\ldots \e_3\b_2 & \a_n\e_{n-1}\ldots \e_4\b_3 & \cdots & {\bf d}_n\\
		}.
	\end{equation*}
\end{definition}
The set of real parameters $\Psi_{\mathbb{QS}}$ as in $(\ref{eq52})$ is called  \emph{QS representation} of $M.$ We use the notation $M(\Psi_{\mathbb{QS}})$ to refer a $\{1,1\}$-QS matrix parameterized by the set $\Psi_{\mathbb{QS}}.$ For the rest part, we set 	\begin{equation}\label{Eq52}
	M(\Psi_{\mathbb{QS}}):=\mathrm{L}_M+\mathrm{D}_M+\mathrm{U}_M,
\end{equation} where $\L_M$ and $\U_M$ denote  the strictly lower and upper triangular part of $M(\Psi_{\mathbb{QS}}),$ respectively, and $\Da_M$ denotes the diagonal part of $M(\Psi_{\mathbb{QS}}).$ 

In Lemma \ref{lm51}, we recall the derivative expressions of a $\{1,1\}$-QS matrix $M(\Psi_{\mathbb{QS}})$ for the parameters in $\Psi_{\mathbb{QS}}$ provided in Definition \ref{def51}, which are useful to obtain the desired upper bounds for CNs. These results are discussed in \cite{dopico2016structured}.
\begin{lemma}\label{lm51}\cite{dopico2016structured}
	Let $M(\Psi_{\mathbb{QS}})\in \mathbb{R}^{n\times n}$ be a $\{1,1\}$-QS matrix. Then $M(\Psi_{\mathbb{QS}})$ has entries that are differentiable functions of $\Psi_{\mathbb{QS}}$ defined as in \eqref{eq51} and
\begin{enumerate}
    \item $\frac{\pr M(\Psi_{\mathbb{QS}})}{\pr {\bf d}_i}=e_i^{n}(e_i^{n})^{\top},$ for $i=1:n$.\label{itm:1}
    \item  $\a_i\frac{\pr M(\Psi_{\mathbb{QS}})}{\pr \a_i}=e_i^{n}\L_M(i,:),$ for $i=2:n$.\label{itm:2}
    \item $\e_i \frac{\pr M(\Psi_{\mathbb{QS}})}{\pr \e_i}=\bmatrix{
			{\bf 0} & {\bf 0}\\
			M(\Psi_{\mathbb{QS}})(i+1:n,1:i-1) & {\bf 0}
		}:=\mathcal{F}_i, $ for $i=2:n-1.$\label{itm:3}
  \item $\b_i\frac{\pr M(\Psi_{\mathbb{QS}})}{\pr \b_i}=\L_M(:,i)(e_i^{n})^{\top},$ for $i=1:n-1$.\label{itm:4}
  \item $\g_i \frac{\pr M(\Psi_{\mathbb{QS}})}{\pr \g_i}=\bmatrix{
			{\bf 0} & M(\Psi_{\mathbb{QS}})(1:i-1,i+1:n) \\
			{\bf 0} & {\bf 0}
		}:=\mathcal{G}_i,$ for  $i=2:n-1.$\label{itm:5}
  \item $\f_i\frac{\pr M(\Psi_{\mathbb{QS}})}{\pr \f_i}=e_i^{n} \U_M(i,:),$ for $i=1:n-1$.\label{itm:6}
  \item  $\h_i \frac{\pr M(\Psi_{\mathbb{QS}})}{\pr \h_i}=\U_M(i,:)(e_i^{n})^{\top},$ for $i=2:n$.\label{itm:7}
\end{enumerate}
\end{lemma}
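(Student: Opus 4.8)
The plan is to prove all seven identities by direct differentiation, leaning on a single structural fact: every entry of $M(\Psi_{\mathbb{QS}})$ is a \emph{monomial} in the parameters in which each parameter that occurs does so to degree exactly one. Reading this off Definition \ref{def51}, a strictly-lower-triangular entry has the form $M(k,j)=\a_k\e_{k-1}\cdots\e_{j+1}\b_j$ for $k>j$, a strictly-upper-triangular entry has the form $M(k,j)=\f_k\g_{k+1}\cdots\g_{j-1}\h_j$ for $k<j$, and the diagonal entry is $M(i,i)={\bf d}_i$; in each product the $\e$- (resp.\ $\g$-) factors carry distinct indices, so no parameter is repeated. This yields the computational rule I would use throughout: if a parameter $\psi$ divides $M(k,j)$, then $\psi\,\partial M(k,j)/\partial\psi=M(k,j)$, and $\partial M(k,j)/\partial\psi=0$ whenever $\psi$ does not occur in $M(k,j)$. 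Because $\psi\,\partial M(k,j)/\partial\psi=\psi\cdot(\text{product of the remaining factors})=M(k,j)$ as a polynomial identity, this holds even if $\psi=0$, so no nonvanishing assumption on the parameters is needed.

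With this rule, the proof reduces to identifying, for each parameter, the exact set of index pairs $(k,j)$ at which it occurs: the matrix $\psi\,\partial M/\partial\psi$ is simply $M$ masked to that support. First I would handle the diagonal: ${\bf d}_i$ occurs only in $M(i,i)$, so $\partial M/\partial{\bf d}_i=E_{ii}^{nn}=e_i^n(e_i^n)^{\top}$, giving \eqref{itm:1}. For the lower generators, $\a_i$ is the leading factor of row $i$ below the diagonal, occurring exactly at $M(i,j)$, $j=1:i-1$, so its support is $\L_M(i,:)$ and $\a_i\,\partial M/\partial\a_i=e_i^n\L_M(i,:)$, which is \eqref{itm:2}; symmetrically $\b_i$ is the trailing factor of column $i$, occurring at $M(k,i)$, $k=i+1:n$, with support $\L_M(:,i)$, giving \eqref{itm:4}. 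The interior factor $\e_i$ occurs in $M(k,j)=\a_k\e_{k-1}\cdots\e_{j+1}\b_j$ precisely when $j+1\le i\le k-1$, i.e.\ $j<i<k$, so its support is the lower-left block (rows $i+1:n$, columns $1:i-1$), which is exactly $\mathcal{F}_i$, proving \eqref{itm:3}. The upper identities \eqref{itm:5}, \eqref{itm:6}, \eqref{itm:7} follow by the identical argument applied to $\f_i,\g_i,\h_i$: $\f_i$ is the leading factor of row $i$ above the diagonal (support $\U_M(i,:)$), $\h_i$ the trailing factor of column $i$ (support $\U_M(:,i)$), and $\g_i$ occurs iff $k<i<j$, yielding the upper-right block $\mathcal{G}_i$.

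The only step requiring genuine care — and the one place a hasty computation goes wrong — is the combinatorial reading-off of the index ranges for the two interior parameters $\e_i$ and $\g_i$, together with their boundary cases ($i=2$ and $i=n-1$, where one block degenerates to a single row or column). I would settle these by substituting the explicit product $M(k,j)=\a_k\e_{k-1}\cdots\e_{j+1}\b_j$ and observing that the consecutive-index set $\{j+1,\dots,k-1\}$ contains $i$ if and only if $j<i<k$, with the mirror inequality $k<i<j$ governing $\g_i$. Once the supports are pinned down the matching with the claimed block and rank-one forms is immediate, so no estimation is involved and the entire argument is elementary; this recovers the formulas of \cite{dopico2016structured}.
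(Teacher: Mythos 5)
Your proof is correct. The paper itself gives no proof of this lemma --- it is simply recalled from \cite{dopico2016structured} --- so there is nothing to compare against, but your argument (each entry is a square-free monomial in the parameters, so $\psi\,\partial M/\partial\psi$ is $M$ masked to the support of $\psi$, and the whole proof reduces to reading off that support, with the only delicate point being the index ranges $j<i<k$ for $\e_i$ and $k<i<j$ for $\g_i$) is the standard and complete one. One small remark: your support for $\h_i$ is $\U_M(:,i)$, i.e.\ column $i$ of $\U_M$, which is the correct (and dimensionally consistent) reading; the lemma as printed writes $\U_M(i,:)(e_i^n)^{\top}$, which appears to be a typo for $\U_M(:,i)(e_i^n)^{\top}$, so you have in effect proved the corrected statement.
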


Next, we use the derivative expressions given  in Lemma \ref{lm51} and Theorem \ref{Th31} to compute the bounds of the structured CNs for $M^{\dagger}(\Psi_{\mathbb{QS}}).$
\begin{theorem}\label{Th51}
	For $M(\Psi_{\mathbb{QS}})\in \mathbb{R}^{n\times n}$ with $\rank(M(\Psi_{\mathbb{QS}}))=r,$ we have
	\begin{eqnarray*}
		\tilde{\M}^{\dagger}(M(\Psi_{\mathbb{QS}}))=\frac{\| \mathcal{X}^{\dagger}_{\mathbb{QS}}\|_{\max}}{\|M^{\dagger}\|_{\max}}\quad and \quad \tilde{\C}^{\dagger}(M(\Psi_{\mathbb{QS}}))=\Vr\frac{\mathcal{X}^{\dagger}_{\mathbb{QS}}}{M^{\dagger}}\Vr_{\max},
	\end{eqnarray*}
	where 
  \vspace{-1mm}
  \begin{align*}
\mathcal{X}^{\dagger}_{\mathbb{QS}}:=&|M^{\dagger}||\Da_M||M^{\dagger}|+|M^{\dagger}{M^{\dagger}}^{\top}||\Da_M||\E_M|+|\F_M||\Da_M||{M^{\dagger}}^{\top}M^{\dagger}|+|M^{\dagger}||\L_MM^{\dagger}|\\
		&+|M^{\dagger}{M^{\dagger}}^{\top}\L_M^{T}|| \E_M|+|\F_M \L_M^{\top}||{M^{\dagger}}^{\top}M^{\dagger}|+|M^{\dagger}\L_M||M^{\dagger}|+|M^{\dagger}{M^{\dagger}}^{\top}||\L_M^{\top}\E_M|\\
		&+|\F_M||\L_M^{\top}{M^{\dagger}}^{\top}M^{\dagger}|+|M^{\dagger}||\U_M M^{\dagger} |+|M^{\dagger}{M^{\dagger}}^{\top}\U_M^{\top}||\E_M|
		+|\F_M \U_M^{\top}||{M^{\dagger}}^{\top}M^{\dagger}|\\
		&+|M^{\dagger}\U_M||M^{\dagger}|+|M^{\dagger}{M^{\dagger}}^{\top}||\U_M^{\top}\E_M|+|\F_M|| \U_M^{\top}{M^{\dagger}}^{\top}M^{\dagger}|\\
		&+\sum_{i=2}^{n-1}\Big(|M^{\dagger}\mathcal{F}_iM^{\dagger}|+|M^{\dagger}{M^{\dagger}}^{\top}\mathcal{F}_i^{\top}\E_M|+|\F_M\mathcal{F}_i^{\top}{M^{\dagger}}^{\top}M^{\dagger}|\Big)\\
		&+\sum_{j=2}^{n-1}\Big(|M^{\dagger}\mathcal{G}_jM^{\dagger}|+|M^{\dagger}{M^{\dagger}}^{\top}\mathcal{G}_j^{\top}\E_M|+|\F_M \mathcal{G}_j^{\top}{M^{\dagger}}^{\top}M^{\dagger}|\Big),
	\end{align*}
		 $ \mathcal{F}_i$ and $\mathcal{G}_i$ are defined as in Lemma \ref{lm51}.
\end{theorem}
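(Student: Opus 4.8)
The plan is to specialize the general bound of Theorem \ref{Th31} to the QS representation by substituting the seven families of partial-derivative formulas from Lemma \ref{lm51} into the master quantity $\mathcal{X}^{\dagger}_{\Psi}$. Since $\Psi_{\mathbb{QS}}$ splits into the groups $\{\a_i\}$, $\{\e_i\}$, $\{\b_i\}$, $\{\d_i\}$, $\{\f_i\}$, $\{\g_i\}$, $\{\h_i\}$, I would write $\mathcal{X}^{\dagger}_{\mathbb{QS}}$ as a sum of seven contributions, one per group, and evaluate each in turn. The crucial simplification is that Lemma \ref{lm51} furnishes the derivatives in closed form, most of them already in the scaled shape $\psi_k\,\partial M(\Psi_{\mathbb{QS}})/\partial\psi_k$; consequently each summand $|M^{\dagger}(\partial M/\partial\psi_k)M^{\dagger}|\,|\psi_k|$ (and its two companions involving $\E_M$ and $\F_M$) equals the same expression with $\partial M/\partial\psi_k$ replaced by the tidy matrix $\psi_k\,\partial M/\partial\psi_k$, because $|\psi_k|$ is a nonnegative scalar that merges into the absolute value. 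Thus the awkward parameter weights disappear automatically.

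Next I would treat the five families whose scaled derivatives are rank one, namely $\{\d_i\}$, $\{\a_i\}$, $\{\b_i\}$, $\{\f_i\}$, $\{\h_i\}$. For these, items \eqref{itm:1}, \eqref{itm:2}, \eqref{itm:4}, \eqref{itm:6}, \eqref{itm:7} of Lemma \ref{lm51} give outer products such as $e_i^{n}\L_M(i,:)$ or $\L_M(:,i)(e_i^{n})^{\top}$. Applying the identity $|ab^{\top}|=|a||b^{\top}|$ from \eqref{Eq35} factors each summand into the product of an absolute column and an absolute row indexed by $i$; summing over $i$ then telescopes into matrix products. For instance the $\{\d_i\}$ family collapses to $|M^{\dagger}||\Da_M||M^{\dagger}|+|M^{\dagger}{M^{\dagger}}^{\top}||\Da_M||\E_M|+|\F_M||\Da_M||{M^{\dagger}}^{\top}M^{\dagger}|$, the $\{\a_i\}$ family to the three terms built from $\L_M M^{\dagger}$ and $\L_M^{\top}$, the $\{\b_i\}$ family to the three terms built from $M^{\dagger}\L_M$ and $\L_M^{\top}\E_M$, and the $\{\f_i\}$, $\{\h_i\}$ families to their $\U_M$-analogues. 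A point requiring care is that the index ranges differ across groups (e.g.\ $\a_i$ runs over $2:n$, $\b_i$ over $1:n-1$, $\d_i$ over $1:n$), so before telescoping I would verify that extending every sum to the full range $1:n$ adds only zero terms, which holds because the missing rows and columns of $\L_M$ and $\U_M$ vanish by strict triangularity.

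The two remaining families $\{\e_i\}$ and $\{\g_i\}$ have scaled derivatives $\mathcal{F}_i$ and $\mathcal{G}_i$ (items \eqref{itm:3} and \eqref{itm:5}) that are genuine band blocks rather than rank-one matrices, so the outer-product factorization is unavailable; for these I would simply retain the explicit sums $\sum_{i=2}^{n-1}\big(|M^{\dagger}\mathcal{F}_iM^{\dagger}|+|M^{\dagger}{M^{\dagger}}^{\top}\mathcal{F}_i^{\top}\E_M|+|\F_M\mathcal{F}_i^{\top}{M^{\dagger}}^{\top}M^{\dagger}|\big)$ and the analogue for $\mathcal{G}_j$. Adding the seven contributions reproduces exactly $\mathcal{X}^{\dagger}_{\mathbb{QS}}$ as stated, and feeding it back into the two bounds of Theorem \ref{Th31} yields the claimed formulas for $\tilde{\M}^{\dagger}(M(\Psi_{\mathbb{QS}}))$ and $\tilde{\C}^{\dagger}(M(\Psi_{\mathbb{QS}}))$; the componentwise bound follows by carrying the same computation through the $\|\cdot/M^{\dagger}\|_{\max}$ norm. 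The main obstacle is bookkeeping rather than analysis: matching each outer product to the correct column/row index, and confirming that the telescoped products agree term by term with the target, especially the placement of transposes in the $\E_M$- and $\F_M$-terms, where one must track whether $\L_M^{\top}$ multiplies from the left as $|M^{\dagger}{M^{\dagger}}^{\top}\L_M^{\top}||\E_M|$ or is absorbed into a factor such as $|M^{\dagger}{M^{\dagger}}^{\top}||\L_M^{\top}\E_M|$.
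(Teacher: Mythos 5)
Your proposal is correct and follows essentially the same route as the paper's proof: substitute the derivative formulas of Lemma \ref{lm51} into the master bound of Theorem \ref{Th31}, split by parameter family, use the identity $|ab^{\top}|=|a||b^{\top}|$ to telescope the five rank-one families into closed-form matrix products, and keep the explicit sums for the $\{\e_i\}$ and $\{\g_i\}$ families whose scaled derivatives $\mathcal{F}_i,\mathcal{G}_i$ are not rank one. The bookkeeping points you flag (index ranges, placement of transposes in the $\E_M$- and $\F_M$-terms) are exactly where the paper's computation does the work, and your resolution of them matches the stated result.
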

\begin{proof} The proof of the above assertions involves determining the contribution of each subset of parameters to the expressions provided in Theorem \ref{Th31}. Using \eqref{itm:1} of Lemma \ref{lm51} for the parameters $\{{\bf d}_i\}_{i=1}^n$,  we have
	\begin{align}\label{eq53}
		\nonumber	\mathcal{E}_{\bf d}&:=\sum_{i=1}^n \Big(|M^{\dagger}\frac{\pr M(\Psi_{\mathbb{QS}})}{\pr {\bf d}_i}M^{\dagger}||{\bf d}_i|+|M^{\dagger}{M^{\dagger}}^{\top} \Big(\frac{\pr M(\Psi_{\mathbb{QS}})}{\pr {\bf d}_i}\Big)^{\top} \E_M||{\bf d}_i| +|\F_M\Big(\frac{\pr M(\Psi_{\mathbb{QS}})}{\pr {\bf d}_i}\Big)^{\top} {M^{\dagger}}^{\top} M^{\dagger} ||{\bf d}_i|\Big)\\ 
		& =\sum_{i=1}^n \Big(|M^{\dagger} e_i^{n}(e_i^n)^{\top} M^{\dagger}||{\bf d}_i|+|M^{\dagger}{M^{\dagger}}^{\top} e_i^{n}(e_i^{n})^{\top} \E_M||{\bf d}_i|+|\F_M e_i^{n}(e_i^{n})^{\top} {M^{\dagger}}^{\top} M^{\dagger} ||{\bf d}_i|\Big).
	\end{align}
	Using $(\ref{Eq35})$ in $(\ref{eq53}),$ we deduce
	\begin{align*}
		\mathcal{E}_{\bf d}&=\sum_{i=1}^n \Big(|M^{\dagger} (:,i)||{\bf d}_i|| M^{\dagger}(i,:)|+|M^{\dagger}{M^{\dagger}}^{\top}(:,i)| |{\bf d}_i| | \E_M(i,:)|+|\F_M(:,i) | |{\bf d}_i|| {M^{\dagger}}^{\top} M^{\dagger}(i,:) |\Big)\\
		&=|M^{\dagger}| |\Da_M||M^{\dagger}|+|M^{\dagger}{M^{\dagger}}^{\top}| |\Da_M||\E_M|+|\F_M| |\Da_M| |{M^{\dagger}}^{\top} M^{\dagger}|.
	\end{align*}
	Similarly, for $\{ \a_i\}_{i=2}^n$ and using \eqref{itm:2} of Lemma \ref{lm51}, we get
	\begin{align*}
		\mathcal{E}_{\a}&:=\sum_{i=2}^n \Big(|M^{\dagger}\frac{\pr M(\Psi_{\mathbb{QS}})}{\pr {\a}_i}M^{\dagger}||{\a}_i|+|M^{\dagger}{M^{\dagger}}^{\top} \Big(\frac{\pr M(\Psi_{\mathbb{QS}})}{\pr {\a}_i}\Big)^{\top} \E_M||{\a}_i|\\
		& \quad \quad \quad \quad+|\F_M\Big(\frac{\pr M(\Psi_{\mathbb{QS}})}{\pr {\a}_i}\Big)^{\top} {M^{\dagger}}^{\top} M^{\dagger} ||{\a}_i|\Big)\\
		&=\sum_{i=2}^n \Big( |M^{\dagger} e_i^{n}\L_M(i,:)M^{\dagger}|+|M^{\dagger}{M^{\dagger}}^{\top}\L_M^{\top}(:,i)(e_i^{n})^{\top}\E_M|+|\F_M \L_M^{\top}(:,i)(e_i^{n})^{\top} {M^{\dagger}}^{\top} M^{\dagger}|\Big)\\
		&=|M^{\dagger}||\L_M M^{\dagger}|+|M^{\dagger}{M^{\dagger}}^{\top} \L_M^{\top}| |\E_M|+|\F_M \L_M^{\top}||{M^{\dagger}}^{\top}M^{\dagger}|.
	\end{align*}
	For the parameters $\{\b_i\}_{i=1}^{n-1}$ and using \eqref{itm:4} of Lemma \ref{lm51}, we deduce
	\begin{align*}
		\mathcal{E}_{\b}:&=\sum_{i=2}^n \Big( |M^{\dagger}\frac{\pr M(\Psi_{\mathbb{QS}})}{\pr {\b}_i}M^{\dagger}||{\b}_i|+|M^{\dagger}{M^{\dagger}}^{\top} \Big(\frac{\pr M(\Psi_{\mathbb{QS}})}{\pr {\b}_i}\Big)^{\top} \E_M||{\b}_i|\\
		&\quad \quad +|\F_M\Big(\frac{\pr M(\Psi_{\mathbb{QS}})}{\pr {\b}_i}\Big)^{\top} {M^{\dagger}}^{\top} M^{\dagger} ||{\b}_i|\Big)\\
		&=|M^{\dagger}\L_M| |M^{\dagger}|+|M^{\dagger}{M^{\dagger}}^{\top}| | \L_M^{\top}\E_M|+|\F_M| | \L_M^{\top}{M^{\dagger}}^{\top}M^{\dagger}|.
	\end{align*}
	For the parameters $\{\e_i\}_{i=2}^{n-1}$, using \eqref{itm:3} of Lemma \ref{lm51}, we have
	\begin{align*}
		\mathcal{E}_{\e}&:=\sum_{i=2}^{n-1} \Big(|M^{\dagger}\frac{\pr M(\Psi_{\mathbb{QS}})}{\pr {\e}_i}M^{\dagger}||{\e}_i|+|M^{\dagger}{M^{\dagger}}^{\top} \Big(\frac{\pr M(\Psi_{\mathbb{QS}})}{\pr {\e}_i}\Big)^{\top} \E_M||{\e}_i|\\
		& \quad \quad+|\F\Big(\frac{\pr M(\Psi_{\mathbb{QS}})}{\pr {\e}_i}\Big)^{\top} {M^{\dagger}}^{\top} M^{\dagger} ||{\e}_i|\Big)\\
		&=\sum_{i=2}^{n-1}\Big(|M^{\dagger}\mathcal{F}_iM^{\dagger}|+| M^{\dagger}{M^{\dagger}}^{\top}\mathcal{F}_i^{\top} \E_M|+|\F_M\mathcal{F}_i^{\top}{M^{\dagger}}^{\top}M^{\dagger}|\Big).
	\end{align*}
	In a similar approach, for the parameters $\{\f_i\}_{i=1}^{n-1},\, \{\g_i\}_{i=2}^{n-1}$ and $\{\h_i\}_{i=2}^{n-1},$ we get
	\begin{align*}
		\mathcal{E}_{\f}&:=\sum_{i=1}^{n-1}\Big( |M^{\dagger}\frac{\pr M(\Psi_{\mathbb{QS}})}{\pr {\f}_i}M^{\dagger}||{\f}_i|+|M^{\dagger}{M^{\dagger}}^{\top} \Big(\frac{\pr M(\Psi_{\mathbb{QS}})}{\pr {\f}_i}\Big)^{\top} \E_M||{\f}_i|\\
		& \quad \quad+|\F_M\Big(\frac{\pr M(\Psi_{\mathbb{QS}})}{\pr {\f}_i}\Big)^{\top} {M^{\dagger}}^{\top} M^{\dagger} ||{\f}_i|\Big)\\
		&=|M^{\dagger}||\U_MM^{\dagger}|+|M^{\dagger}{M^{\dagger}}^{\top} \U_M^{\top}| |\E_M|+|\F_M \U_M^{\top}||{M^{\dagger}}^{\top}M^{\dagger}|.
	\end{align*}
	\begin{align*}
		\mathcal{E}_{\h}&:=\sum_{i=2}^{n} \Big(|M^{\dagger}\frac{\pr M(\Psi_{\mathbb{QS}})}{\pr {\h}_i}M^{\dagger}||{\h}_i|+|M^{\dagger}{M^{\dagger}}^{\top} \Big(\frac{\pr M(\Psi_{\mathbb{QS}})}{\pr {\h}_i}\Big)^{\top} \E_M||{\h}_i|\\
		&\quad \quad+|\F_M\Big(\frac{\pr M(\Psi_{\mathbb{QS}})}{\pr {\h}_i}\Big)^{\top} {M^{\dagger}}^{\top} M^{\dagger} ||{\h}_i|\Big)\\
		&=|M^{\dagger}\U_M| |M^{\dagger}|+|M^{\dagger}| |{M^{\dagger}}^{\top} \U_M^{\top}\E_M|+|\F_M| | \U_M^{\top}{M^{\dagger}}^{\top}M^{\dagger}|.
	\end{align*}
	\begin{align*}
		\mathcal{E}_{\g}&:=\sum_{i=2}^{n-1} \Big(|M^{\dagger}\frac{\pr M(\Psi_{\mathbb{QS}})}{\pr {\g}_i}M^{\dagger}||{\g}_i|+|M^{\dagger}{M^{\dagger}}^{\top} \Big(\frac{\pr M(\Psi_{\mathbb{QS}})}{\pr {\g}_i}\Big)^{\top} \E_M||{\g}_i|\\
		&\quad \quad+|\F_M\Big(\frac{\pr M(\Psi_{\mathbb{QS}})}{\pr {\g}_i}\Big)^{\top} {M^{\dagger}}^{\top} M^{\dagger} ||{\g}_i|\Big)\\
		&=\sum_{i=2}^{n-1}\Big(|M^{\dagger}\mathcal{G}_iM^{\dagger}|+| M^{\dagger}{M^{\dagger}}^{\top}\mathcal{G}_i^{\top} \E_M|+|\F_M \mathcal{G}_i^{\top}{M^{\dagger}}^{\top}M^{\dagger}|\Big).
	\end{align*}
	Now, it is straightforward from Theorem \ref{Th31} that $$\mathcal{X}^{\dagger}_{\mathbb{QS}}=\mathcal{E}_{\bf d}+\mathcal{E}_{\a}+\mathcal{E}_{\b}+\mathcal{E}_{\e}+\mathcal{E}_{\f}+\mathcal{E}_{\g}+\mathcal{E}_{\h},$$
	and hence, the proof is completed. 
\end{proof}
 \begin{remark}
     When $M(\Psi_{\mathbb{QS}})$ is a $n\times n$ nonsingular $\{1,1\}$-QS matrix, we have $\E_M={\bf 0}.$ Then, using the expressions in Theorem \ref{Th32},  and an analogous way to Theorem \ref{Th51}, exact formulae of the structured CNs can be deduced for the inversion of $M(\Psi_{\mathbb{QS}}).$ 
 \end{remark}
For any $\{1,1\}$-QS matrix $M(\Psi_{\mathbb{QS}})$,  it is worth noting that there exist infinitely many QS representations, as indicated by Dopico and Pom$\Acute{e}$s \cite{martinez2016structured}. The next result demonstrates that $\tilde{\M}^{\dagger}(M(\Psi_{\mathbb{QS}}))$ and $\tilde{\C}^{\dagger}((\Psi_{\mathbb{QS}}))$ are independent of the QS representation used.

\begin{proposition}
	For any two representations  $\Psi_{\mathbb{QS}}$ and  $\Psi'_{\mathbb{QS}}$ of a $\{1,1\}$-QS matrix $M\in \R^{m\times n},$ we get
 \begin{equation*}
     \tilde{\M}^{\dagger}(M(\Psi_{\mathbb{QS}}))=\tilde{\M}^{\dagger}(M(\Psi'_{\mathbb{QS}}))  \quad \text{and}\quad \tilde{\C}^{\dagger}(M(\Psi_{\mathbb{QS}}))=\tilde{\C}^{\dagger}(M(\Psi'_{\mathbb{QS}})).
 \end{equation*}
\end{proposition}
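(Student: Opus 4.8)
The plan is to read the conclusion directly off the closed-form expression for $\mathcal{X}^{\dagger}_{\mathbb{QS}}$ produced in Theorem \ref{Th51}, observing that this matrix carries no explicit dependence on the parameters in $\Psi_{\mathbb{QS}}$ and is therefore an intrinsic function of $M$ alone. Once this is recognized, representation-independence is immediate: any two QS representations of the same $M$ feed identical data into the formula.

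First I would recall how the parameters enter the bound. In Theorem \ref{Th31} every summand contains the parameter only through the product $|\psi_k|\,\bigl|(\cdots)\frac{\partial M(\Psi_{\mathbb{QS}})}{\partial \psi_k}(\cdots)\bigr|$. Since $\psi_k$ is a scalar, the elementary identity $|\psi_k|\,|A|=|\psi_k A|$ lets me absorb the magnitude inside each absolute value, so that each contribution is built from the matrix $\psi_k\frac{\partial M(\Psi_{\mathbb{QS}})}{\partial \psi_k}$ rather than from $\psi_k$ and $\frac{\partial M}{\partial \psi_k}$ separately. By Lemma \ref{lm51}, each of these products is a fixed matrix expressed purely through the entries of $M$: the group $\{\d_i\}$ yields $\Da_M$, the groups $\{\a_i\}$ and $\{\b_i\}$ yield $\L_M$, the groups $\{\f_i\}$ and $\{\h_i\}$ yield $\U_M$, and $\{\e_i\},\{\g_i\}$ yield the submatrices $\mathcal{F}_i,\mathcal{G}_i$. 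This is precisely the collecting step already performed in the proof of Theorem \ref{Th51}.

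Consequently, the matrix $\mathcal{X}^{\dagger}_{\mathbb{QS}}$ is assembled entirely from $M^{\dagger}$, the projectors $\E_M$ and $\F_M$, the diagonal part $\Da_M$, the strictly triangular parts $\L_M,\U_M$, and the submatrices $\{\mathcal{F}_i,\mathcal{G}_i\}$, each of which depends only on $M$ and not on how $M$ is parameterized. Thus, if $\Psi_{\mathbb{QS}}$ and $\Psi'_{\mathbb{QS}}$ are two QS representations of the same matrix $M$, they induce the same diagonal, lower, and upper parts, the same projectors $\E_M,\F_M$, the same pseudoinverse $M^{\dagger}$, and the same submatrices $\mathcal{F}_i,\mathcal{G}_i$; hence the two matrices $\mathcal{X}^{\dagger}_{\mathbb{QS}}$ coincide. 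Dividing by $\|M^{\dagger}\|_{\max}$ gives $\tilde{\M}^{\dagger}(M(\Psi_{\mathbb{QS}}))=\tilde{\M}^{\dagger}(M(\Psi'_{\mathbb{QS}}))$, and applying the componentwise quotient by $M^{\dagger}$ gives the corresponding identity for $\tilde{\C}^{\dagger}$.

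The step I expect to be the crux is the conceptual one, not any computation: recognizing that the parameters have already been eliminated in favor of $M$-intrinsic data once the magnitudes $|\psi_k|$ are absorbed through Lemma \ref{lm51}. Since the formula of Theorem \ref{Th51} is seen to be a function of $M$ alone, there is nothing left to verify. I would note that this argument uses only the \emph{upper-bound} expressions of Theorem \ref{Th51}, and would add the caveat that it establishes invariance of the computed bounds $\tilde{\M}^{\dagger},\tilde{\C}^{\dagger}$ across representations, rather than of the exact condition numbers $\M^{\dagger},\C^{\dagger}$ themselves.
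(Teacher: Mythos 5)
Your proof is correct and takes essentially the same route as the paper: the paper's own argument is the one-line observation that the formulae of Theorem \ref{Th51} depend only on the entries of $M$, $M^{\dagger}$, $\E_M$ and $\F_M$, not on the choice of parameter set. Your version simply makes explicit the mechanism behind that observation (absorbing $|\psi_k|$ into the derivatives via Lemma \ref{lm51} so that only $\Da_M$, $\L_M$, $\U_M$, $\mathcal{F}_i$, $\mathcal{G}_i$ remain), and your closing caveat correctly reflects that the proposition concerns the upper bounds $\tilde{\M}^{\dagger},\tilde{\C}^{\dagger}$ rather than the exact condition numbers.
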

\begin{proof}
	Observe that the formulae given by the Theorem \ref{Th51} only depend on the entries of $M,$ $M^{\dagger},$ $\E_M$ and $\F_M,$ but not on the specific selection of the parameter set; hence, the proof follows.
\end{proof}
We provide upper bounds on the structured CNs of the MNLS solution of the LS problem $(\ref{eqn312})$ corresponding to $\{1,1\}$-QS matrices in the next theorem.
\begin{theorem}\label{Th52}
	Let $M(\Psi_{\mathbb{QS}})\in \mathbb{R}^{n\times n}$ be such that $\rank(M(\Psi_{\mathbb{QS}}))=r$ and $b\in \mathbb{R}^n.$ 
	Set $\r:=b-M(\Psi_{\mathbb{QS}})\x.$ Then
	\begin{equation*}
		\tilde{\M}^{\dagger}(M(\Psi_{\mathbb{QS}}),b)= \frac{\|\mathcal{X}^{ls}_{\mathbb{QS}}\|_{\infty}}{\|\x\|_{\infty}} \quad \mbox{and}\quad\tilde{\C}^{\dagger}(M(\Psi_{\mathbb{QS}}),b)=\left\| \frac{\mathcal{X}^{ls}_{\mathbb{QS}}}{\x}\right\|_{\infty},
	\end{equation*}
	where
 \vspace{-1mm}
	\begin{align*}
		\mathcal{X}^{ls}_{\mathbb{QS}}:&=|M^{\dagger}||b|+|M^{\dagger}||\Da_M| |\x|+|M^{\dagger}{M^{\dagger}}^{\top}||\Da_M||\r|+|\F_M||\Da_M||{M^{\dagger}}^{\top}\x|+|M^{\dagger}||\L_M \x|\\
		&+|M^{\dagger}{M^{\dagger}}^{\top}\L_M^{T}|| \r|+|\F_M \L_M^{\top}||{M^{\dagger}}^{\top}\x|+|M^{\dagger}\L_M||\x|+|M^{\dagger}{M^{\dagger}}^{\top}||\L_M^{\top} \r|\\
		&+|\F_M||\L_M^{\top}|{M^{\dagger}}^{\top}\x|+|M^{\dagger}||\U_M \x |+|M^{\dagger}{M^{\dagger}}^{\top}\U_M^{\top}||\r|+|\F_M \U_M^{\top}||{M^{\dagger}}^{\top}\x|\\
		&+|M^{\dagger}\U_M||\x|+|M^{\dagger}{M^{\dagger}}^{\top}||\U_M^{\top}\r|+|\F_M|| \U_M^{\top}{M^{\dagger}}^{\top} \x|\\
		&+\sum_{i=2}^{n-1}\Big(|M^{\dagger}\mathcal{F}_i \x|+|M^{\dagger}{M^{\dagger}}^{\top}\mathcal{F}_i^{\top}\r|+|\F_M\mathcal{F}_i^{\top}{M^{\dagger}}^{\top}\x|\Big)\\
		&+\sum_{j=2}^{n-1}\Big(|M^{\dagger}\mathcal{G}_j \x|+|M^{\dagger}{M^{\dagger}}^{\top}\mathcal{G}_j                                                        ^{\top}\r|+|\F_M \mathcal{G}_j^{\top}{M^{\dagger}}^{\top}\x|\Big).
	\end{align*}
\end{theorem}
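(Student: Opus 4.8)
The plan is to specialize the general upper-bound framework of Theorem \ref{Th33} to the QS representation, paralleling the derivation of Theorem \ref{Th51} for the M-P inverse. Recall that Theorem \ref{Th33} writes $\mathcal{X}^{ls}_{\Psi}$ as a sum over the parameters of three absolute-value terms, namely $\vr M^{\dagger}\frac{\pr M(\Psi)}{\pr\psi_k}\x\vr$, $\vr M^{\dagger}{M^{\dagger}}^{\top}(\frac{\pr M(\Psi)}{\pr\psi_k})^{\top}\r\vr$, and $\vr\F_M(\frac{\pr M(\Psi)}{\pr\psi_k})^{\top}{M^{\dagger}}^{\top}\x\vr$, each weighted by $|\psi_k|$, together with the boundary contribution $|M^{\dagger}||b|$ coming from the perturbation $\D b$. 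Thus the task reduces to evaluating, for each of the seven parameter families of $\Psi_{\mathbb{QS}}$, the contribution of these three terms, using the derivative formulas collected in Lemma \ref{lm51}.

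First I would treat the five rank-one families $\{\d_i\}$, $\{\a_i\}$, $\{\b_i\}$, $\{\f_i\}$, $\{\h_i\}$, whose scaled derivatives $\psi_k\frac{\pr M(\Psi_{\mathbb{QS}})}{\pr\psi_k}$ are, by Lemma \ref{lm51}, rank-one outer products (a row or column of $\L_M$, $\U_M$, or $\Da_M$ paired with a coordinate vector $e_i^n$). For each such family the key simplification is the identity $|ab^{\top}|=|a||b^{\top}|$ of \eqref{Eq35}: it lets me pull the coordinate selector out of the absolute value, so every summand collapses to a column of $M^{\dagger}$ (or of $M^{\dagger}{M^{\dagger}}^{\top}$, or of $\F_M$) multiplied by a scalar entry, and the sum over $i$ reassembles into a matrix--vector product. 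For the diagonal family, for instance, $\sum_i\vr M^{\dagger}e_i^n(e_i^n)^{\top}\x\vr|\d_i|=|M^{\dagger}||\Da_M||\x|$; the $\{\a_i\},\{\b_i\}$ families produce the $\L_M$-terms and the $\{\f_i\},\{\h_i\}$ families the $\U_M$-terms of $\mathcal{X}^{ls}_{\mathbb{QS}}$.

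Next I would handle the two block families $\{\e_i\}$ and $\{\g_i\}$: their scaled derivatives $\mathcal{F}_i$ and $\mathcal{G}_i$ are genuine rank-structured blocks rather than rank-one products, so they do not condense and simply remain as the residual sums $\sum_{i=2}^{n-1}(\cdots)$ in the statement. Collecting all seven contributions together with $|M^{\dagger}||b|$ gives $\mathcal{X}^{ls}_{\mathbb{QS}}$, and inserting it into Theorem \ref{Th33} yields the claimed $\tilde{\M}^{\dagger}$ and $\tilde{\C}^{\dagger}$, the componentwise form using $\normx{\D\x/\x}_{\infty}=\normx{\Theta_{\x^{\ddagger}}\D\x}_{\infty}$. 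The computation runs parallel to Theorem \ref{Th51}, the only change, inherited from the differing right factors in Theorems \ref{Th31} and \ref{Th33}, being that each trailing $M^{\dagger}$ is replaced by $\x$ and each trailing $\E_M$ by $\r$; since these turn a matrix right-factor into a vector, the absolute values regroup slightly, a scalar entry $(\cdot)_i$ being pulled out while the surviving column selector merges into the left-hand product.

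The main obstacle is precisely this regrouping bookkeeping: in each of the three terms of each family one must identify which scalar is extracted (e.g. $(\L_M\x)_i$, $(\U_M^{\top}\r)_i$, or $({M^{\dagger}}^{\top}\x)_i$) and check that the remaining selector reassembles into the intended product, for example $|M^{\dagger}{M^{\dagger}}^{\top}||\U_M^{\top}\r|$ rather than the misgrouped $|M^{\dagger}||{M^{\dagger}}^{\top}\U_M^{\top}\r|$. No analytic difficulty arises beyond this, since $\D\Psi\in\S(\Psi)$ secures the rank condition underpinning the perturbation expansion of Lemma \ref{lm32}.
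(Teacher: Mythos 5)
Your proposal is correct and follows exactly the route the paper intends: the paper omits the proof of this theorem, stating only that it follows by the same parameter-by-parameter computation as Theorem \ref{Th51} applied to the expressions of Theorem \ref{Th33}, which is precisely what you carry out. Your attention to how the absolute values regroup for the two terms involving $\x$ and $\r$ (distinguishing, e.g., $|M^{\dagger}{M^{\dagger}}^{\top}\L_M^{\top}||\r|$ from $|M^{\dagger}{M^{\dagger}}^{\top}||\L_M^{\top}\r|$ according to which family contributes the coordinate selector) matches the structure of $\mathcal{X}^{ls}_{\mathbb{QS}}$ in the statement.
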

\begin{proof}
	The statement can be easily verified using a similar justification to that of the proof of the Theorem \ref{Th51} and using the Theorem \ref{Th33}. Hence, we omit the proof.
\end{proof}
\begin{remark}
	By considering $M(\Psi_{\mathbb{QS}})$ nonsingular, for $A(\Psi_{\mathbb{QS}})\x=b,$ using the expression in Theorem \ref{Th34} and $\r=0,$ we can obtain following expression for the structured MCN of $\x:$ 
	\begin{align*}
 \M^{\dagger}\big(M(\Psi_{\mathbb{QS}}),b\big)=\frac{1}{\|\x\|_{\infty}}&\Vr	|M^{-1}||b|+|M^{-1}||\Da_M||\x|+|M^{-1}||\L_M\x|+|M^{-1}\L_M||\x|+|M^{-1}||\U_M\x|\\
		&+|M^{-1}\U_M||\x|+\sum_{i=2}^{n-1}|M^{-1}\mathcal{F}_i\x|+\sum_{j=2}^{n-1}|M^{-1}\mathcal{G}_j\x|\Vr_{\max}.
	\end{align*}
	This result is the same as obtained by Dopico and Pom$\acute{e}$s \cite{martinez2016structured}.
\end{remark}
\subsection{CNs corresponding to GV representation}\label{ss52}
The	GV representation, proposed initially in \cite{vandebril2005note}, is another essential representation for $\{1,1\}$-QS matrices. This representation is used to enhance the stability of fast algorithms. In this subsection, we first review the GV representation together with its minor variant called GV representation through tangent. Then, discuss the results for structured CNs corresponding to this representation.
\begin{definition}\label{def52}\cite{vandebril2005note}
	Any $M\in \mathbb{R}^{n\times n}$ is classified to be a $\{1,1\}$-QS matrix if it can be represented by the parameter set
	\begin{align}\label{eq55}
		\Psi_{\mathbb{QS}}^{\mathcal{GV}}=\Big[\{p_i,q_i\}_{i=2}^{n-1},\{u_i\}_{i=1}^{n-1},\{{\bf d}_i\}_{i=1}^n,\{v_i\}_{i=1}^{n-1},\{r_i,s_i\}_{i=2}^{n-1}\Big]^{\top}\in \R^{7n-10}, 
	\end{align}
	satisfying the following properties,
	\begin{enumerate}
		\item $\{p_i,q_i\}$ is a cosine-sine pair with $p_i^2+q_i^2=1,$ for every $i\in \{2:n-1\},$
		\item $\{u_i\}_{i=1}^{n-1}, \, \{{\bf d}_i\}_{i=1}^n, \, \text{and} \, \, \{v_i\}_{i=1}^{n-1}$ are independent parameters,
		\item $\{r_i,s_i\}$ is a cosine-sine pair with $r_i^2+s_i^2=1,$ for every $i\in \{2:n-1\},$
	\end{enumerate}
	as follows:
	{\footnotesize \begin{align*}
			M=\bmatrix{
				{\bf d}_1 &  v_1r_2 & v_1s_2r_3 & \ldots & v_1s_2\ldots s_{n-2}r_{n-1}& v_1s_2\ldots s_{n-1}\\
				p_2u_1 & {\bf d}_2 & v_2r_3 &\ldots & v_2s_3\ldots s_{n-2}r_{n-1}& v_2s_3\ldots s_{n-1}\\
				p_3q_2u_1 & p_3u_2 &{\bf d}_3 & \ldots & v_3s_4\ldots s_{n-2}r_{n-1}& v_3s_4\ldots s_{n-1}\\
				\vdots & \vdots &\vdots & \ddots & \vdots & \vdots\\
				p_{n-1}q_{n-2}\ldots q_2u_1 & p_{n-1}q_{n-2}\ldots q_{3}u_2& p_{n-1}q_{n-2}\ldots q_{4}u_3& \cdots & {\bf d}_{n-1} & v_{n-1}\\
				q_{n-1}\ldots q_2u_1 & q_{n-1}\ldots q_{3}u_2& q_{n-1}\ldots q_{4}u_3& \cdots  & u_{n-1}& {\bf d}_{n} }.
	\end{align*}}
\end{definition}

Note that the $\Psi_{\mathbb{QS}}^{\mathcal{GV}}$ is a special case of $\Psi_{\mathbb{QS}}$ by considering $ \{\a_i,\e_i\}_{i=2}^{n-1}=\{p_i,q_i\}_{i=2}^{n-1},$ $\{\b_i\}_{i=1}^{n-1}=\{u_i\}_{i=1}^{n-1}$, $\{{\bf \d}_i\}_{i=1}^{n}=\{{\bf d}_i\}_{i=1}^{n},$ $\{\f_i\}_{i=1}^{n-1}=\{v_i\}_{i=1}^{n-1},$ $\{\g_i,\h_i\}_{i=2}^{n-1}=\{s_i,r_i\}_{i=2}^{n-1},$ and $\a_n=\h_n=1$ with additional conditions on the parameters.
Since the parameters $p_i$ and $q_i$ are dependent, arbitrary perturbation to $\Psi_{\mathbb{QS}}^{\mathcal{GV}}$ will destroy the cosine-sine pairs and the same is true for $r_i$ and $s_i.$ Thus, it will be more sensible to restrict the perturbation that preserves the cosine-sine pair. Consequently, Dopico and Pom$\acute{e}$s in \cite{dopico2016structured} introduced a new representation called GV representation through tangent using their tangents.
\begin{definition}
	For the GV representation $\Psi_{\mathbb{QS}}^{\mathcal{GV}}$ as in $(\ref{eq55})$, the GV representation through  tangent is defined as 
	\begin{align}\label{eq56}
		\Psi_{\mathcal{GV}}=\big[\{t_i\}_{i=2}^{n-1}, \{u_i\}_{i=1}^{n-1},\{{\bf d_i}\}_{i=1}^n, \{v_i\}_{i=1}^{n-1},\{w_i\}_{i=2}^{n-1}\big]^{\top}\in \R^{5n-6},
	\end{align}
	where $p_i=\frac{1}{\sqrt{1+t_i^2}},\, q_i=\frac{t_i}{\sqrt{1+t_i^2}}\,\, \text{and} \, \, r_i=\frac{1}{\sqrt{1+w_i^2}},\,  s_i=\frac{w_i}{\sqrt{1+w_i^2}} ,$ for \, $i=2: n-1.$
\end{definition}
We employ the notation $M(\Psi_{\mathcal{GV}})$ to refer a $\{1,1\}$-QS matrix parameterized by the set $\Psi_{\mathcal{GV}}.$ 
The derivative expressions corresponding to the parameters in the representation $\Psi_ {\mathcal{GV}}$ are revisited in the  next lemma:
\begin{lemma}\label{lm52}\cite{dopico2016structured}
	Let $M(\Psi_{\mathcal{GV}})\in \R^{n\times n}$ having $\rank(M(\Psi_{\mathcal{GV}}))=r.$ Then each entry of $M(\Psi_{\mathcal{GV}})$ is differentiable functions of the elements in $\Psi_{\mathcal{GV}},$ and 
 \begin{enumerate}
     \item  $t_i\frac{\pr M(\Psi_{\mathcal{GV}})}{\pr t_i}=\bmatrix{
			{\bf 0} & {\bf 0}\\
			-q_i^2M(\Psi_{\mathcal{GV}})(i,1:i-1)& {\bf 0} \\
			p_i^2 M(\Psi_{\mathcal{GV}})(i+1:n,1:i-1) & {\bf 0}
		}:=\mathcal{K}_i,$ for $ i=2:n-1. $\label{itm1a}
  \item $w_i \frac{\pr M(\Psi_{\mathcal{GV}})}{\pr w_i}=\bmatrix{
			{\bf 0} & -s_i^2M(\Psi_{\mathcal{GV}})(1:i-1,i) & r_i^2 M(\Psi_{\mathcal{GV}})(1:i-1,i+1:n)\\
			{\bf 0} & {\bf 0} & {\bf 0}
		}:=\mathcal{L}_i,$ for $ i=2:n-1. $ \label{itm2a}
 \end{enumerate}
\end{lemma}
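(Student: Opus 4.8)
The plan is to derive both identities directly from the explicit entrywise form of $M(\Psi_{\mathcal{GV}})$ displayed in Definition~\ref{def52}, so that the statement of \cite{dopico2016structured} is recovered from scratch. First I would record the closed forms of the nonzero entries: the strictly lower-triangular entries are $M(k,j)=p_k q_{k-1}\cdots q_{j+1}u_j$ for $k>j$ (the last row carrying no leading $p_n$, i.e.\ $\a_n=1$), and the strictly upper-triangular entries are $M(k,j)=v_k s_{k+1}\cdots s_{j-1}r_j$ for $k<j$ (the last column carrying no trailing $r_n$, i.e.\ $\h_n=1$). The essential point is that a fixed tangent $t_i$, with $2\le i\le n-1$, enters $M$ only through the pair $(p_i,q_i)$, and I would pin down exactly where these occur: $p_i$ is the leading factor of row $i$ and appears in no other row, so it affects only the entries $M(i,1:i-1)$; while $q_i$ occurs in the product defining $M(k,j)$ precisely when $j<i<k$, so it affects only the block $M(i+1:n,1:i-1)$. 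Every remaining entry is independent of $t_i$, which already pins down the zero pattern of $\mathcal{K}_i$.

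The one genuine computation is the pair of scaled derivatives of the cosine--sine parametrization. From $p_i=(1+t_i^2)^{-1/2}$, $q_i=t_i p_i$ and the constraint $p_i^2+q_i^2=1$, I would obtain
\begin{equation*}
 t_i\frac{\pr p_i}{\pr t_i}=-q_i^2 p_i, \qquad t_i\frac{\pr q_i}{\pr t_i}=q_i p_i^2.
\end{equation*}
Multiplying by $t_i$ is exactly what lets the constraint collapse the coefficients into the clean scalars $-q_i^2$ and $p_i^2$ that appear in $\mathcal{K}_i$.

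With these in hand the differentiation is a one-line product rule. For $j<i$ the prefactor $q_{i-1}\cdots q_{j+1}u_j=M(i,j)/p_i$ is free of $t_i$, so $t_i\,\pr M(i,j)/\pr t_i=(M(i,j)/p_i)(-q_i^2 p_i)=-q_i^2 M(i,j)$; for $k>i$, $j<i$ the remaining factor is $M(k,j)/q_i$, giving $t_i\,\pr M(k,j)/\pr t_i=(M(k,j)/q_i)(q_i p_i^2)=p_i^2 M(k,j)$. Assembling the blocks yields $\mathcal{K}_i$. The second identity follows by the analogous argument for the upper-triangular part: $w_i$ acts only through $(r_i,s_i)$, with $r_i$ confined to column $i$ of the upper-triangular part and $s_i$ to the block $\{k<i<j\}$; the same computation gives $w_i\,\pr r_i/\pr w_i=-s_i^2 r_i$ and $w_i\,\pr s_i/\pr w_i=s_i r_i^2$, producing the blocks $-s_i^2 M(1:i-1,i)$ and $r_i^2 M(1:i-1,i+1:n)$ of $\mathcal{L}_i$.

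I expect the main obstacle to be bookkeeping rather than analysis: namely, correctly isolating which entries depend on $t_i$ (resp.\ $w_i$) and verifying that the boundary conventions $\a_n=1$ and $\h_n=1$, which drop a factor in the last row and last column, do not perturb the stated block forms. Since the index $i$ ranges only over $2:n-1$, these conventions never touch the rows or columns carrying $p_i,q_i$ (resp.\ $r_i,s_i$), so the patterns above hold unchanged; once the occurrence pattern is settled, the scaled-derivative identities finish the proof immediately.
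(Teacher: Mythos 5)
Your proposal is correct. Note that the paper itself offers no proof of this lemma: it is stated as a result recalled from \cite{dopico2016structured}, so there is no in-paper argument to compare against; your derivation supplies the missing details from first principles. The two ingredients you isolate are exactly the right ones. First, the occurrence pattern: $t_i$ enters $M$ only through $p_i$ (confined to the entries $M(i,1:i-1)$) and $q_i$ (confined to $M(k,j)$ with $j<i<k$), and since $i$ ranges over $2:n-1$ the boundary conventions $\a_n=\h_n=1$ never interfere, so the zero blocks of $\mathcal{K}_i$ are forced. Second, the scaled derivatives: from $p_i=(1+t_i^2)^{-1/2}$ and $q_i=t_ip_i$ one gets $\frac{\pr p_i}{\pr t_i}=-t_ip_i^3$ and $\frac{\pr q_i}{\pr t_i}=p_i^3$, hence $t_i\frac{\pr p_i}{\pr t_i}=-q_i^2p_i$ and $t_i\frac{\pr q_i}{\pr t_i}=q_ip_i^2$, which check out. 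Each affected entry is linear in the single factor $p_i$ or $q_i$, so the product rule immediately yields the multipliers $-q_i^2$ and $p_i^2$ on the corresponding blocks (one small cosmetic point: at $t_i=0$ the division by $q_i$ in your one-line computation is undefined, but the linearity argument $M(k,j)=q_iC$ with $C$ free of $t_i$ gives the same conclusion without dividing, so nothing breaks). The argument for $\mathcal{L}_i$ via $(r_i,s_i)$ is symmetric and equally sound.
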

{\bf Note: } Partial derivative expressions corresponding to the parameters $\{u_i\}_{i=1}^{n-1}, \{{\bf d}_i\}_{i=1}^n$ and $ \{v_i\}_{i=1}^{n-1}$ are same as the expression for the parameters $\{\b_i\}_{i=1}^{n-1}, \{{\bf d}_i\}_{i=1}^n $ and $\{\f_i\}_{i=1}^{n-1},$ respectively, given in the Lemma \ref{lm51}.

In Theorem \ref{Th53}, we discuss computationally feasible upper bounds for the structured CNs introduced in Definition $\ref{def31}$ corresponding to the representation $\Psi_{\mathcal{GV}}$.
\begin{theorem}\label{Th53}
	Let $M(\Psi_{\mathcal{GV}})\in \mathbb{R}^{n\times n}$ with $\rank(M(\Psi_{\mathcal{GV}}))=r,$ then
	\begin{eqnarray*}
\tilde{\M}^{\dagger}\big(M(\Psi_{\mathcal{GV}})\big)=\frac{\|\mathcal{X}^{\dagger}_{\mathcal{GV}}\|_{\max}}{\|M^{\dagger}\|_{\max}}\quad  \mbox{and}\quad\tilde{\C}^{\dagger}\big(M(\Psi_{\mathcal{GV}})\big)=\Vr \frac{\mathcal{X}^{\dagger}_{\mathcal{GV}}}{M^{\dagger}}\Vr_{\max},
	\end{eqnarray*}
	where
	\begin{align*}
\mathcal{X}^{\dagger}_{\mathcal{GV}}:&=|M^{\dagger}||\Da_M||M^{\dagger}|+|M^{\dagger}{M^{\dagger}}^{\top}||\Da_M||\E_M|+|\F_M||\Da_M||{M^{\dagger}}^{\top}M^{\dagger}|+|M^{\dagger}\L_M||M^{\dagger}|\\
		&+|M^{\dagger}{M^{\dagger}}^{\top}||\L_M^{\top}\E_M|+|\F_M||\L_M^{\top}{M^{\dagger}}^{\top}M^{\dagger}|+|M^{\dagger}||\U_MM^{\dagger}|+|M^{\dagger}{M^{\dagger}}^{\top}\U_M^{\top}||\E_M|\\
		&+|\F_M \U_M^{\top}||{M^{\dagger}}^{\top}M^{\dagger}|+\sum_{i=2}^{n-1}\Big(|M^{\dagger}\mathcal{K}_iM^{\dagger}|+|M^{\dagger}{M^{\dagger}}^{\top}\mathcal{K}_i^{\top} \E_M|+|\F_M \mathcal{K}_i^{\top}{M^{\dagger}}^{\top}M^{\dagger}|\Big)\\
		&+\sum_{j                    =2}^{n-1}\Big(|M^{\dagger}\mathcal{L}_jM^{\dagger}|+|M^{\dagger}{M^{\dagger}}^{\top}\mathcal{L}_j^{\top} \E_M|+|\F_M \mathcal{L}_j^{\top}{M^{\dagger}}^{\top}M^{\dagger}|\Big).
	\end{align*}
	Here, $\mathcal{K}_i$ and $\mathcal{L}_i$ are defined as in Lemma \ref{lm52}.
\end{theorem}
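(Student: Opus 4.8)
The plan is to follow the template already used for the QS representation in Theorem \ref{Th51}, applying the general upper-bound framework of Theorem \ref{Th31} and evaluating the contribution of each block of parameters in $\Psi_{\mathcal{GV}}$ separately. Concretely, I would split $\Psi_{\mathcal{GV}}=[\{t_i\}, \{u_i\}, \{{\bf d}_i\}, \{v_i\}, \{w_i\}]^{\top}$ into its five natural groups and write $\mathcal{X}^{\dagger}_{\mathcal{GV}}$ as a sum of partial contributions $\mathcal{E}_t+\mathcal{E}_u+\mathcal{E}_{\bf d}+\mathcal{E}_v+\mathcal{E}_w$, where each $\mathcal{E}$ collects the three absolute-value terms appearing in the formula for $\mathcal{X}^{\dagger}_{\Psi}$ in Theorem \ref{Th31}, weighted by $|\psi_k|$ and summed over the relevant index range.

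For the groups $\{u_i\}$, $\{{\bf d}_i\}$ and $\{v_i\}$ I would invoke the Note following Lemma \ref{lm52}: their partial derivatives coincide exactly with those of $\{\b_i\}$, $\{{\bf d}_i\}$ and $\{\f_i\}$ from Lemma \ref{lm51}. Hence $\mathcal{E}_{\bf d}$, $\mathcal{E}_u$ and $\mathcal{E}_v$ are literally the contributions $\mathcal{E}_{\bf d}$, $\mathcal{E}_{\b}$ and $\mathcal{E}_{\f}$ already computed in the proof of Theorem \ref{Th51}. In each case the derivative is a rank-one outer product (namely $e_i^n(e_i^n)^{\top}$, $\L_M(:,i)(e_i^n)^{\top}$ and $e_i^n\U_M(i,:)$), so the identity $|ab^{\top}|=|a||b^{\top}|$ from $(\ref{Eq35})$ lets the absolute values factor and the index sum telescope into the clean matrix products $|M^{\dagger}||\Da_M||M^{\dagger}|$, $|M^{\dagger}\L_M||M^{\dagger}|$, $|M^{\dagger}||\U_M M^{\dagger}|$, and their companions. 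This reproduces the first nine terms of $\mathcal{X}^{\dagger}_{\mathcal{GV}}$ verbatim.

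For the two remaining groups I would use Lemma \ref{lm52} directly. The essential observation is that $t_i\,\frac{\pr M(\Psi_{\mathcal{GV}})}{\pr t_i}=\mathcal{K}_i$ and $w_i\,\frac{\pr M(\Psi_{\mathcal{GV}})}{\pr w_i}=\mathcal{L}_i$, so the weighting factor $|t_i|$ (resp. $|w_i|$) cancels against the $1/t_i$ (resp. $1/w_i$) implicit in the derivative, turning each summand into $|M^{\dagger}\mathcal{K}_iM^{\dagger}|$, $|M^{\dagger}{M^{\dagger}}^{\top}\mathcal{K}_i^{\top}\E_M|$, $|\F_M\mathcal{K}_i^{\top}{M^{\dagger}}^{\top}M^{\dagger}|$ and the analogous $\mathcal{L}_i$ terms. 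Unlike the rank-one case, $\mathcal{K}_i$ and $\mathcal{L}_i$ are genuine block submatrices and do not collapse across the index, so these contributions must be left as the two explicit sums over $i=2:n-1$ that appear in the statement. Summing $\mathcal{E}_t+\mathcal{E}_u+\mathcal{E}_{\bf d}+\mathcal{E}_v+\mathcal{E}_w$ then gives $\mathcal{X}^{\dagger}_{\mathcal{GV}}$, and substituting into Theorem \ref{Th31} yields the claimed bounds for $\tilde{\M}^{\dagger}(M(\Psi_{\mathcal{GV}}))$ and $\tilde{\C}^{\dagger}(M(\Psi_{\mathcal{GV}}))$.

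The work here is essentially bookkeeping rather than a genuine mathematical obstacle, since the underlying framework is already in place. The only points demanding care are (i) correctly cancelling the parameter-magnitude weights $|t_i|,|w_i|$ against the normalizing factors in Lemma \ref{lm52}, and (ii) keeping track of which parameter blocks are rank-one (and therefore telescope into compact products) versus which are block-structured (and must be retained inside a sum over $i$).
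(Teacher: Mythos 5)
Your proposal is correct and follows essentially the same route as the paper's proof: both decompose $\Psi_{\mathcal{GV}}$ into the five parameter groups, reuse the contributions $\mathcal{E}_{\bf d},\mathcal{E}_{\b},\mathcal{E}_{\f}$ from Theorem \ref{Th51} for $\{{\bf d}_i\},\{u_i\},\{v_i\}$ via the Note after Lemma \ref{lm52}, and absorb the weights $|t_i|,|w_i|$ into the derivatives through the identities $t_i\,\partial M/\partial t_i=\mathcal{K}_i$ and $w_i\,\partial M/\partial w_i=\mathcal{L}_i$ before substituting into Theorem \ref{Th31}. No gaps.
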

\begin{proof}
	To prove the claim, we need to find the contribution of each subset of parameters in $\Psi_{\mathcal{GV}}$ to the expressions given in Theorem \ref{Th31}. Since the derivative expressions for the parameters $\{u_i\}_{i=1}^{n-1}, \{{\bf d}_i\}_{i=1}^n$ and $ \{v_i\}_{i=1}^{n-1}$ are same as the expressions for the parameters $\{\b_i\}_{i=1}^{n-1}, \{{\bf d}_i\}_{i=1}^n$ and $\{\f_i\}_{i=1}^{n-1},$ respectively, given in the Lemma \ref{lm51}, contribution for these parameters in the expressions  given in Theorem \ref{Th31} are same as $ \mathcal{E}_{\b},$ $ \mathcal{E}_{\bf d}$ and $ \mathcal{E}_{\f},$ respectively. Therefore, we set  $\mathcal{E}_u:=\mathcal{E}_{\b}$ and $\mathcal{E}_v:=\mathcal{E}_{\f}.$
	
	For the parameters $\{t_i\}_{i=2}^{n-1},$ we have
	\begin{align*}
		\mathcal{E}_{t}&:=\sum_{i=2}^{n-1} \Big(|M^{\dagger}\frac{\pr M(\Psi_{\mathcal{GV}})}{\pr {t}_i}M^{\dagger}||{t}_i|+|M^{\dagger}{M^{\dagger}}^{\top} \Big(\frac{\pr M(\Psi_{\mathcal{GV}})}{\pr {t}_i}\Big)^{\top} \E_M||{t}_i| +|\F_{M}\Big(\frac{\pr M(\Psi_{\mathcal{GV}})}{\pr {t}_i}\Big)^{\top} {M^{\dagger}}^{\top} M^{\dagger} ||{t}_i|\Big)\\
		&=\sum_{i=2}^{n-1}\Big(|M^{\dagger}\mathcal{K}_iM^{\dagger}|+| M^{\dagger}{M^{\dagger}}^{\top}\mathcal{K}_i^{\top} \E_M|+|\F_M \mathcal{K}_i^{\top}{M^{\dagger}}^{\top}M^{\dagger}|\Big).
	\end{align*}
	For the parameters $\{{w}_i\}_{i=2}^{n-1},$ we have
	\begin{align*}
		\mathcal{E}_{w}&:=\sum_{i=2}^{n-1} \Big(|M^{\dagger}\frac{\pr M(\Psi_{\mathcal{GV}})}{\pr {w}_i}M^{\dagger}||{w}_i|+|M^{\dagger}{M^{\dagger}}^{\top} \Big(\frac{\pr M(\Psi_{\mathcal{GV}})}{\pr {w}_i}\Big)^{\top} \E_M||{w}_i|+|\F\Big(\frac{\pr M(\Psi_{\mathcal{GV}})}{\pr {w}_i}\Big)^{\top} {M^{\dagger}}^{\top} M^{\dagger} ||{w}_i|\Big)\\
		&=\sum_{i=2}^{n-1}\Big(|M^{\dagger}\mathcal{L}_iM^{\dagger}|+| M^{\dagger}{M^{\dagger}}^{\top}\mathcal{L}_i^{\top} \E_M|+|\F_M \mathcal{L}_i^{\top}{M^{\dagger}}^{\top}M^{\dagger}|\Big).
	\end{align*}
	Now, from Theorem \ref{Th31}, we have $\mathcal{X}^{\dagger}_{\mathcal{GV}}=\mathcal{E}_{\bf d}+\mathcal{E}_{u}+\mathcal{E}_{v}+\mathcal{E}_{t}+\mathcal{E}_{w},$
	and hence, the proof follows. 
\end{proof}
The bounds of the structured CNs for $\x$ with respect to the representation $\Psi_{\mathcal{GV}}$  are given in the next theorem. 

\begin{theorem}\label{Th54}
	Let $M(\Psi_{\mathcal{GV}})\in \mathbb{R}^{n\times n}$ with $\rank(M(\Psi_{\mathcal{GV}}))=r.$
	Set $\r:=b-M(\Psi_{\mathcal{GV}})\x,$ then
	\begin{eqnarray*}
\tilde{\M}^{\dagger}\big(M(\Psi_{\mathcal{GV}}),b\big)=\frac{\|\mathcal{X}^{ls}_{\mathcal{GV}}\|_{\infty}}{\|\x\|_{\infty}} \quad  \mbox{and}\quad\tilde{\C}\big(M(\Psi_{\mathcal{GV}}),b\big)=\Bigg\Vert \frac{\mathcal{X}^{ls}_{\mathcal{GV}}}{\x}\Bigg\Vert_{\infty},
	\end{eqnarray*}
	where
	\begin{align*}
\mathcal{X}^{ls}_{\mathcal{GV}}:&=|M^{\dagger}||\Da_M||\x|+|M^{\dagger}{M^{\dagger}}^{\top}||\Da_M||\r|+|\F_M||\Da_M||{M^{\dagger}}^{\top}\x|+|M^{\dagger}\L_M||\x|\\
		&+|M^{\dagger}{M^{\dagger}}^{\top}||\L_M^{\top} \r|+|\F_M||\L_M^{\top}{M^{\dagger}}^{\top} \x|+|M^{\dagger}||\U_M\x|+|M^{\dagger}{M^{\dagger}}^{\top}\U_M^{\top}||\r|\\
		&+|\F_M \U_M^{\top}||{M^{\dagger}}^{\top}\x|+\sum_{i=2}^{n}\Big(|M^{\dagger}\mathcal{K}_i\x|+|M^{\dagger}{M^{\dagger}}^{\top}\mathcal{K}_i^{\top} \r|+|\F_M \mathcal{K}_i^{\top}{M^{\dagger}}^{\top}\x|\Big)\\
&+\sum_{i=2}^{n}\Big(|M^{\dagger}\mathcal{L}_i\x|+|M^{\dagger}{M^{\dagger}}^{\top}\mathcal{L}_i^{\top} \r|+|\F_M \mathcal{L}_i^{\top}{M^{\dagger}}^{\top}\x|\Big)+|M^{\dagger}||b|.
	\end{align*}
	Here, $\mathcal{K}_i$ and $\mathcal{L}_i$ are defined as in Lemma \ref{lm52}.
\end{theorem}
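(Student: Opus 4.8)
The plan is to specialize the general upper bound of Theorem \ref{Th33} to the parameter set $\Psi_{\mathcal{GV}}$ of \eqref{eq56}, in exactly the way Theorem \ref{Th53} specialized Theorem \ref{Th31} for the M-P inverse. Theorem \ref{Th33} asserts $\tilde{\M}^{\dagger}(M(\Psi),b)=\|\mathcal{X}^{ls}_{\Psi}\|_{\infty}/\|\x\|_{\infty}$ and $\tilde{\C}^{\dagger}(M(\Psi),b)=\|\Theta_{\x^{\ddagger}}\mathcal{X}^{ls}_{\Psi}\|_{\infty}$, where for each parameter $\psi_k$ the quantity $\mathcal{X}^{ls}_{\Psi}$ contributes the three absolute-value blocks $|M^{\dagger}\frac{\partial M}{\partial\psi_k}\x|$, $|M^{\dagger}{M^{\dagger}}^{\top}(\frac{\partial M}{\partial\psi_k})^{\top}\r|$ and $|\F_M(\frac{\partial M}{\partial\psi_k})^{\top}{M^{\dagger}}^{\top}\x|$, each weighted by $|\psi_k|$, together with the single residual term $|M^{\dagger}||b|$ arising from the perturbation $\D b$. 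First I would partition the sum defining $\mathcal{X}^{ls}_{\Psi_{\mathcal{GV}}}$ according to the five parameter groups $\{t_i\}$, $\{u_i\}$, $\{{\bf d}_i\}$, $\{v_i\}$, $\{w_i\}$ and evaluate each group's contribution separately, since each $\psi_k\frac{\partial M(\Psi_{\mathcal{GV}})}{\partial\psi_k}$ has an explicit form from Lemma \ref{lm52}.

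For the three groups $\{u_i\}_{i=1}^{n-1}$, $\{{\bf d}_i\}_{i=1}^{n}$ and $\{v_i\}_{i=1}^{n-1}$, I would invoke the Note following Lemma \ref{lm52}: their derivative expressions coincide with those of $\{\b_i\}$, $\{{\bf d}_i\}$ and $\{\f_i\}$ in Lemma \ref{lm51}. Hence their contributions are, block for block, the quantities $\mathcal{E}_{\b}$, $\mathcal{E}_{\bf d}$ and $\mathcal{E}_{\f}$ already computed in the proofs of Theorems \ref{Th51} and \ref{Th52}, but with the substitutions dictated by Theorem \ref{Th33}, namely the rightmost $M^{\dagger}$ replaced by $\x$ and $\E_M$ replaced by $\r$. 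Using $|ab^{\top}|=|a||b^{\top}|$ from \eqref{Eq35} to collapse the row/column scalings, this produces the diagonal block $|M^{\dagger}||\Da_M||\x|+|M^{\dagger}{M^{\dagger}}^{\top}||\Da_M||\r|+|\F_M||\Da_M||{M^{\dagger}}^{\top}\x|$, the lower block $|M^{\dagger}\L_M||\x|+|M^{\dagger}{M^{\dagger}}^{\top}||\L_M^{\top}\r|+|\F_M||\L_M^{\top}{M^{\dagger}}^{\top}\x|$, and the upper block $|M^{\dagger}||\U_M\x|+|M^{\dagger}{M^{\dagger}}^{\top}\U_M^{\top}||\r|+|\F_M\U_M^{\top}||{M^{\dagger}}^{\top}\x|$.

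The only genuinely new computation is for $\{t_i\}_{i=2}^{n-1}$ and $\{w_i\}_{i=2}^{n-1}$. Here I would substitute $t_i\frac{\partial M(\Psi_{\mathcal{GV}})}{\partial t_i}=\mathcal{K}_i$ and $w_i\frac{\partial M(\Psi_{\mathcal{GV}})}{\partial w_i}=\mathcal{L}_i$ from Lemma \ref{lm52} directly into the three blocks of Theorem \ref{Th33}; because the weight $|t_i|$ (respectively $|w_i|$) is absorbed into the derivative to form $\mathcal{K}_i$ (respectively $\mathcal{L}_i$), these contributions stay as the sums $\sum_{i=2}^{n-1}(|M^{\dagger}\mathcal{K}_i\x|+|M^{\dagger}{M^{\dagger}}^{\top}\mathcal{K}_i^{\top}\r|+|\F_M\mathcal{K}_i^{\top}{M^{\dagger}}^{\top}\x|)$ and the analogue in $\mathcal{L}_i$, with no further collapse, since $\mathcal{K}_i$ and $\mathcal{L}_i$ do not telescope into single matrix products the way the triangular parts do. Adding the five groups and the residual term $|M^{\dagger}||b|$ recovers $\mathcal{X}^{ls}_{\mathcal{GV}}$, and plugging back into Theorem \ref{Th33} yields both stated formulas.

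I do not expect a serious obstacle: the argument is a systematic specialization of the general framework, entirely parallel to the proof of Theorem \ref{Th53} for the M-P inverse. The main care is bookkeeping---matching each of the five parameter blocks, reusing the QS computations for $u_i,{\bf d}_i,v_i$, and keeping the $t_i,w_i$ terms in summation form---and verifying that the index ranges of the $\mathcal{K}_i$ and $\mathcal{L}_i$ sums match the parameter ranges $i=2:n-1$ in \eqref{eq56}.
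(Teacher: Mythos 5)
Your proposal is correct and takes essentially the same approach the paper intends: the paper omits this proof entirely, stating only that it follows the method of Theorem \ref{Th53}, which is precisely your plan of specializing Theorem \ref{Th33} group-by-group via Lemma \ref{lm52} while reusing the $\mathcal{E}_{\bf d}$, $\mathcal{E}_{\b}$, $\mathcal{E}_{\f}$ contributions for the $\{{\bf d}_i\}$, $\{u_i\}$, $\{v_i\}$ parameters. Your remark that the $\mathcal{K}_i$ and $\mathcal{L}_i$ sums should run over $i=2:n-1$ (the range in Lemma \ref{lm52}) rather than $i=2:n$ as printed in the theorem statement is also a correct catch of a typo in the paper.
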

\begin{proof}
	The proof follows 
 by using the similar proof technique of Theorem \ref{Th53}.   Hence, we omit it.
\end{proof}
\subsection{Comparisons between different CNs for $\{1,1\}$-QS matrices}\label{ss53}
We compare structured and unstructured CNs for the M-P inverse in Proposition \ref{pr53} and the MNLS solution in Proposition \ref{pr54} for $\{1,1\}$-QS matrix. For unstructured CNs, we use the expressions given in Corollary \ref{coro31} and Corollary \ref{coro34}. Relationships between the CNs for representations $\Psi_{\mathbb{QS}}$ and $\Psi_{\mathcal{GV}}$ are also investigated in Propositions $\ref{pr55}$ and $\ref{pr56}.$

The next result describes that structured CNs for the parameter set $\Psi_{\mathbb{QS}}$ are smaller than unstructured ones for the M-P inverse up to an order of $n$.
\begin{proposition}\label{pr53}
	Let $M(\Psi_{\mathbb{QS}})\in \mathbb{R}^{n\times n}$ be such that $\rank(M(\Psi_{\mathbb{QS}}))=r,$ then we get the following relations
	\begin{equation*}
		\tilde{\M}^{\dagger}\left(M(\Psi_{\mathbb{QS}})\right)\leq n  \,\,  \tilde{\M}^{\dagger}(M)  \, \, \mbox{and}\, \, \, \tilde{\C}^{\dagger}\left(M(\Psi_{\mathbb{QS}})\right)\leq  n \,  \,\tilde{\C}^{\dagger}(M).
	\end{equation*}
\end{proposition}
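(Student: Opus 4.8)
The plan is to establish the single entrywise inequality
\begin{equation*}
\mathcal{X}^{\dagger}_{\mathbb{QS}} \leq n\,\big(|M^{\dagger}||M||M^{\dagger}| + |M^{\dagger}{M^{\dagger}}^{\top}||M^{\top}||\E_M| + |\F_M||M^{\top}||{M^{\dagger}}^{\top}M^{\dagger}|\big) =: n\,\mathcal{X}^{\dagger},
\end{equation*}
where $\mathcal{X}^{\dagger}$ is the nonnegative matrix appearing inside the unstructured bounds of Corollary \ref{coro31} and $\mathcal{X}^{\dagger}_{\mathbb{QS}}$ is the one from Theorem \ref{Th51}. Since both matrices are entrywise nonnegative, this inequality immediately gives $\|\mathcal{X}^{\dagger}_{\mathbb{QS}}\|_{\max} \leq n\|\mathcal{X}^{\dagger}\|_{\max}$, and dividing by $\|M^{\dagger}\|_{\max}$ yields the MCN claim. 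For the CCN claim, note that the componentwise division by $M^{\dagger}$ scales the $(i,j)$ entry by the nonnegative factor $|(M^{\dagger}_{ij})^{\ddagger}|$, so the entrywise inequality is preserved and $\|\mathcal{X}^{\dagger}_{\mathbb{QS}}/M^{\dagger}\|_{\max} \leq n\|\mathcal{X}^{\dagger}/M^{\dagger}\|_{\max}$ follows at once. Thus the whole proposition reduces to the displayed entrywise bound.

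The first ingredient I would use is the exact splitting $|M| = |\Da_M| + |\L_M| + |\U_M|$ and $|M^{\top}| = |\Da_M| + |\L_M^{\top}| + |\U_M^{\top}|$, valid because the diagonal and the two strictly triangular parts in \eqref{Eq52} have pairwise disjoint supports; substituting these into $\mathcal{X}^{\dagger}$ resolves each of its three summands into a diagonal, a lower, and an upper piece. The second ingredient is that, by Lemma \ref{lm51}, every nonzero entry of $\mathcal{F}_i$ (resp. $\mathcal{G}_j$) is an entry of the strictly lower (resp. upper) triangular part of $M$; hence $|\mathcal{F}_i| \leq |\L_M|$ and $|\mathcal{G}_j| \leq |\U_M|$ entrywise, and therefore $\sum_{i=2}^{n-1}|\mathcal{F}_i| \leq (n-2)|\L_M|$ and $\sum_{j=2}^{n-1}|\mathcal{G}_j| \leq (n-2)|\U_M|$.

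With these in hand I would bound each of the fifteen single terms and the two sums in $\mathcal{X}^{\dagger}_{\mathbb{QS}}$ using the elementary product inequality $|AB| \leq |A||B|$, and group them according to which building block $U_1 := |M^{\dagger}||M||M^{\dagger}|$, $U_2 := |M^{\dagger}{M^{\dagger}}^{\top}||M^{\top}||\E_M|$, $U_3 := |\F_M||M^{\top}||{M^{\dagger}}^{\top}M^{\dagger}|$ they feed into. Within the $U_1$ group the diagonal piece $|M^{\dagger}||\Da_M||M^{\dagger}|$ occurs once, while the lower piece $|M^{\dagger}||\L_M||M^{\dagger}|$ is produced twice (from the $\a$- and $\b$-parameter terms $|M^{\dagger}||\L_M M^{\dagger}|$ and $|M^{\dagger}\L_M||M^{\dagger}|$) and at most $n-2$ further times from $\sum_i|M^{\dagger}\mathcal{F}_iM^{\dagger}|$, for total coefficient $2+(n-2)=n$; the upper piece is symmetric. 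The identical count applies verbatim to the $U_2$ and $U_3$ groups, so each group is $\leq nU_k$ and summing gives $\mathcal{X}^{\dagger}_{\mathbb{QS}} \leq n(U_1+U_2+U_3) = n\mathcal{X}^{\dagger}$. The crux that fixes the constant as exactly $n$ — and the only place requiring care — is this bookkeeping for the $\e$- and $\g$-parameter sums: one must verify that the $n-2$ copies contributed by $\sum_i|\mathcal{F}_i|$ combine with precisely the two direct triangular terms to total $n$, and likewise for the upper part; everything else is a routine application of $|AB|\leq|A||B|$ together with the triangular decomposition of $M$.
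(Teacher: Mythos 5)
Your proposal is correct and follows essentially the same route as the paper: bound each term of $\mathcal{X}^{\dagger}_{\mathbb{QS}}$ via $|AB|\leq|A||B|$, use $|\mathcal{F}_i|\leq|\L_M|$ and $|\mathcal{G}_j|\leq|\U_M|$ to absorb the two sums, and count $2+(n-2)=n$ copies of each triangular piece against the decomposition $|M|=|\Da_M|+|\L_M|+|\U_M|$ before passing to the $\max$ norm. No gaps; the bookkeeping you flag as the crux is exactly the step the paper carries out.
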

\begin{proof}
	Using the properties of absolute values and Theorem \ref{Th51}, we have
	\begin{align*}
\mathcal{X}^{\dagger}_{\mathbb{QS}}&\leq|M^{\dagger}||\Da_M||M^{\dagger}|+|M^{\dagger}{M^{\dagger}}^{\top}||\Da_M||\E_M|+|\F_M||\Da_M||{M^{\dagger}}^{\top}M^{\dagger}|\\
		&+2|M^{\dagger}||\L_M||M^{\dagger}|+2|M^{\dagger}{M^{\dagger}}^{\top}| |\L_M^{T}|| \E_M| +2|\F_M|| \L_M^{\top}||{M^{\dagger}}^{\top}M^{\dagger}|\\
		&+2|M^{\dagger}||\U_M ||M^{\dagger} |
		+2|M^{\dagger}{M^{\dagger}}^{\top}||\U_M^{\top}||\E_M|+2|\F_M|| \U_M^{\top}||{M^{\dagger}}^{\top}M^{\dagger}|\\
		&+\sum_{i=2}^{n-1}\Big(|M^{\dagger}||\mathcal{F}_i||M^{\dagger}|+|M^{\dagger}{M^{\dagger}}^{\top}||\mathcal{F}_i^{\top}||\E_M|
		+|\F_M||\mathcal{F}_i^{\top}||{M^{\dagger}}^{\top}M^{\dagger}|\Big)\\
		&+\sum_{i=2}^{n-1}\Big(|M^{\dagger}| |\mathcal{G}_i||M^{\dagger}|+|M^{\dagger}{M^{\dagger}}^{\top}||\mathcal{G}_i^{\top}||\E_M|+|\F_M|| \mathcal{G}_i^{\top}||{M^{\dagger}}^{\top}M^{\dagger}|\Big).
	\end{align*}
	Using $|\mathcal{F}_i|\leq |\L_M|$ and $|\mathcal{G}_i|\leq |\U_M|$, we get
	\begin{equation*}
		\mathcal{X}^{\dagger}_{\mathbb{QS}} \leq n \,\Big(|M^{\dagger}||M||M^{\dagger}|+|M^{\dagger}{M^{\dagger}}^{\top}||M^{\top}||\E_M|+|\F_M||M^{\top}||{M^{\dagger}}^{\top}M^{\dagger}|\Big).
	\end{equation*}
	Therefore, the desired relations can be obtained from Theorem \ref{Th51} and Corollary \ref{coro31}.
\end{proof}
A similar type of result also holds for the LS problem, which is given next. We remove the proof since it is analogous to Proposition \ref{pr53}.
\begin{proposition}\label{pr54}
	Let $M(\Psi_{\mathbb{QS}})\in \R^{n\times n}$ be as in Proposition \ref{pr53} and $b\in \mathbb{R}^m.$ Then, we get the following relations
	\begin{equation*}
		\tilde{\M}^{\dagger}(M(\Psi_{\mathbb{QS}}),b)\leq n \, \,\,  \tilde{\M}^{\dagger}(M,b) \quad \mbox{and}\quad \tilde{\C}^{\dagger}(M(\Psi_{\mathbb{QS}}),b)\leq \, \, n \, \,\tilde{\C}^{\dagger}(M,b).
	\end{equation*}
\end{proposition}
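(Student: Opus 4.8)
The plan is to imitate the argument of Proposition \ref{pr53} verbatim in structure, now starting from the structured quantity $\mathcal{X}^{ls}_{\mathbb{QS}}$ produced in Theorem \ref{Th52} and comparing it entrywise against the bracketed expression that defines the unstructured bounds in Corollary \ref{coro34}. First I would apply submultiplicativity of the entrywise absolute value, $|PQ|\le|P||Q|$, to every summand of $\mathcal{X}^{ls}_{\mathbb{QS}}$, so that in each term the factors $\Da_M,\L_M,\U_M,\x,\r$ and $b$ are separated; for instance $|M^{\dagger}||\L_M\x|\le|M^{\dagger}||\L_M||\x|$ and $|M^{\dagger}{M^{\dagger}}^{\top}||\L_M^{\top}\r|\le|M^{\dagger}{M^{\dagger}}^{\top}||\L_M^{\top}||\r|$, and likewise for the $\U_M$ blocks. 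I would then sort the resulting terms into the three structural families anchored respectively by $|M^{\dagger}|$, by $|M^{\dagger}{M^{\dagger}}^{\top}|$ and by $|\F_M|$, plus the single contribution $|M^{\dagger}||b|$.

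The crucial bookkeeping is counting copies. In $\mathcal{X}^{ls}_{\mathbb{QS}}$ every off-diagonal block of each family occurs twice explicitly (once with the derivative acting on the left, once on the right), while the sums $\sum_{i=2}^{n-1}$ over $\mathcal{F}_i$ and $\sum_{j=2}^{n-1}$ over $\mathcal{G}_j$ each contribute $n-2$ further copies once the entrywise bounds $|\mathcal{F}_i|\le|\L_M|$ and $|\mathcal{G}_j|\le|\U_M|$ (already used in Proposition \ref{pr53}) are invoked. Hence each family collects exactly $2+(n-2)=n$ copies of its $|\L_M|$ term and $n$ copies of its $|\U_M|$ term, whereas the diagonal $|\Da_M|$ term of each family, and the $b$-term, occur only once and are therefore trivially dominated by $n$ copies.

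Using the disjoint-support identity $|M|=|\Da_M|+|\L_M|+|\U_M|$, equivalently $|M^{\top}|=|\Da_M|+|\L_M^{\top}|+|\U_M^{\top}|$, the three regrouped families telescope and I obtain the entrywise inequality
\begin{align*}
\mathcal{X}^{ls}_{\mathbb{QS}}\le n\Big(|M^{\dagger}||M||\x|+|M^{\dagger}{M^{\dagger}}^{\top}||M^{\top}||\r|+|\F_M||M^{\top}||{M^{\dagger}}^{\top}\x|+|M^{\dagger}||b|\Big),
\end{align*}
whose right-hand side is exactly $n$ times the vector inside the norms in Corollary \ref{coro34}. Since $\mathcal{X}^{ls}_{\mathbb{QS}}\ge 0$ entrywise, monotonicity of $\|\cdot\|_{\infty}$ under the entrywise order yields $\tilde{\M}^{\dagger}(M(\Psi_{\mathbb{QS}}),b)\le n\,\tilde{\M}^{\dagger}(M,b)$ after dividing by $\|\x\|_{\infty}$; and because left multiplication by $\Theta_{\x^{\ddagger}}$ preserves the entrywise order of nonnegative vectors (it scales coordinate $i$ by $|\x_i^{\ddagger}|$), the same inequality carried through $\|\Theta_{\x^{\ddagger}}\,\cdot\,\|_{\infty}$ gives $\tilde{\C}^{\dagger}(M(\Psi_{\mathbb{QS}}),b)\le n\,\tilde{\C}^{\dagger}(M,b)$.

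The main obstacle is organizational rather than conceptual: correctly matching each of the roughly twenty summands of $\mathcal{X}^{ls}_{\mathbb{QS}}$ to its family and verifying that the tally is \emph{exactly} $n$ for every off-diagonal piece and at most $n$ for the diagonal and $b$ terms, so that no constant larger than $n$ is introduced. Additional care is needed to keep the residual $\r$ attached to its correct type-$B$ and type-$C$ blocks and never silently replaced by $\x$, since $\r$ and $\x$ occupy distinct, non-interchangeable slots in the unstructured template of Corollary \ref{coro34}.
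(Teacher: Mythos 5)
Your proposal is correct and is essentially the proof the paper intends: the paper omits the argument, stating only that it is analogous to Proposition \ref{pr53}, and your write-up is precisely that analogue carried out for $\mathcal{X}^{ls}_{\mathbb{QS}}$ from Theorem \ref{Th52} against Corollary \ref{coro34}, with the same bounds $|\mathcal{F}_i|\le|\L_M|$, $|\mathcal{G}_j|\le|\U_M|$ and the same $2+(n-2)=n$ tally. The bookkeeping of the $\x$, $\r$ and $b$ slots and the handling of $\Theta_{\x^{\ddagger}}$ for the componentwise bound are all accurate.
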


Next result discuss about the relationship between the $\tilde{\M}^{\dagger}(M(\Psi_{\mathbb{QS}}))$ with $\tilde{\M}^{\dagger}(M(\Psi_{\mathcal{GV}}))$ and $\tilde{\C}^{\dagger}(M(\Psi_{\mathbb{QS}}))$ with $\tilde{\C}^{\dagger}(M(\Psi_{\mathcal{GV}})).$
\begin{proposition}\label{pr55}
	For the	representations $\Psi_{\mathbb{QS}}$ and $\Psi_{\mathcal{GV}}$ of a $\{1,1\}$-QS matrix $M\in \mathbb{R}^{n\times n}$ with $\rank(M)=r,$ following holds:
	\begin{equation*}
		\tilde{\M}^{\dagger}(M(\Psi_{\mathcal{GV}}))\leq \tilde{\M}^{\dagger}(M(\Psi_{\mathbb{QS}})) \quad \mbox{and}\quad	\tilde{\C}^{\dagger}(M(\Psi_{\mathcal{GV}}))\leq \tilde{\C}^{\dagger}(M(\Psi_{\mathbb{QS}})).
	\end{equation*} 
\end{proposition}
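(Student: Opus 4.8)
The plan is to read off the two explicit bounds from Theorems \ref{Th51} and \ref{Th53} and compare them \emph{entrywise} before passing to the $\max$-norm. Recall that the proof of Theorem \ref{Th51} writes $\mathcal{X}^{\dagger}_{\mathbb{QS}}=\mathcal{E}_{\bf d}+\mathcal{E}_{\a}+\mathcal{E}_{\b}+\mathcal{E}_{\e}+\mathcal{E}_{\f}+\mathcal{E}_{\g}+\mathcal{E}_{\h}$, while the proof of Theorem \ref{Th53} gives $\mathcal{X}^{\dagger}_{\mathcal{GV}}=\mathcal{E}_{\bf d}+\mathcal{E}_{u}+\mathcal{E}_{v}+\mathcal{E}_{t}+\mathcal{E}_{w}$ with $\mathcal{E}_{u}=\mathcal{E}_{\b}$ and $\mathcal{E}_{v}=\mathcal{E}_{\f}$. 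Thus $\mathcal{E}_{\bf d}$, $\mathcal{E}_{\b}$ and $\mathcal{E}_{\f}$ are common to both expressions, and the whole statement collapses to the two entrywise inequalities $\mathcal{E}_{t}\le\mathcal{E}_{\a}+\mathcal{E}_{\e}$ and $\mathcal{E}_{w}\le\mathcal{E}_{\g}+\mathcal{E}_{\h}$.

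The heart of the argument is a block decomposition of the GV derivative matrices $\mathcal{K}_i,\mathcal{L}_i$ of Lemma \ref{lm52} in terms of the QS derivative matrices of Lemma \ref{lm51}. Writing $A_i:=\a_i\frac{\pr M(\Psi_{\mathbb{QS}})}{\pr \a_i}$ and $B_i:=\h_i\frac{\pr M(\Psi_{\mathbb{QS}})}{\pr \h_i}$ for the scaled $\a_i$- and $\h_i$-derivatives, I would verify the identities
\begin{align*}
\mathcal{K}_i=p_i^2\,\mathcal{F}_i-q_i^2\,A_i,\qquad \mathcal{L}_i=r_i^2\,\mathcal{G}_i-s_i^2\,B_i.
\end{align*}
These follow by matching nonzero blocks: in $\mathcal{K}_i$ the off-diagonal block $M(\Psi_{\mathbb{QS}})(i+1:n,1:i-1)$ appears with weight $p_i^2$, which is exactly the block carried by $\mathcal{F}_i$, whereas the $i$-th row of $\mathcal{K}_i$ equals $-q_i^2$ times the row $\L_M(i,:)$ making up $A_i$; the identity for $\mathcal{L}_i$ is the mirror image obtained by interchanging rows with columns and the lower with the upper triangular part.

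Granting these, I would apply the triangle inequality for $|\cdot|$ together with the cosine-sine constraints $p_i^2+q_i^2=1$ and $r_i^2+s_i^2=1$, which force $0\le p_i^2,q_i^2,r_i^2,s_i^2\le1$. For each of the three summands defining $\mathcal{E}_{t}$ this yields, e.g., $|M^{\dagger}\mathcal{K}_iM^{\dagger}|\le p_i^2|M^{\dagger}\mathcal{F}_iM^{\dagger}|+q_i^2|M^{\dagger}A_iM^{\dagger}|\le|M^{\dagger}\mathcal{F}_iM^{\dagger}|+|M^{\dagger}A_iM^{\dagger}|$, and analogously for the $\E_M$- and $\F_M$-terms. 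Summing over $i=2:n-1$, the $\mathcal{F}_i$-contributions reassemble exactly into $\mathcal{E}_{\e}$, while the $A_i$-contributions, via $|ab^{\top}|=|a||b^{\top}|$ and the column-by-column collapse used in the proof of Theorem \ref{Th51}, reassemble into a \emph{partial} sum of $\mathcal{E}_{\a}$; as all terms are nonnegative and $\mathcal{E}_{\a}$ runs over the wider range $i=2:n$, this partial sum is bounded by $\mathcal{E}_{\a}$, giving $\mathcal{E}_{t}\le\mathcal{E}_{\a}+\mathcal{E}_{\e}$. The mirror computation with $\mathcal{L}_i,\mathcal{G}_i,B_i$ gives $\mathcal{E}_{w}\le\mathcal{E}_{\g}+\mathcal{E}_{\h}$, so $\mathcal{X}^{\dagger}_{\mathcal{GV}}\le\mathcal{X}^{\dagger}_{\mathbb{QS}}$ entrywise. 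Since $0\le X\le Y$ implies both $\|X\|_{\max}\le\|Y\|_{\max}$ and $\big\|X/M^{\dagger}\big\|_{\max}\le\big\|Y/M^{\dagger}\big\|_{\max}$, dividing by $\|M^{\dagger}\|_{\max}$ in the first case and normalizing by $M^{\dagger}$ in the second delivers the two claimed inequalities.

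I expect the principal difficulty to be careful bookkeeping rather than a conceptual hurdle: fixing the exact block placement and signs in the decompositions of $\mathcal{K}_i$ and $\mathcal{L}_i$, and ensuring that the reassembled $A_i$- and $B_i$-sums are genuine sub-sums of $\mathcal{E}_{\a}$ and $\mathcal{E}_{\h}$, so that the mismatch of index ranges ($i=2:n-1$ versus $i=2:n$) does not cause over-counting.
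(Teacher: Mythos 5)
Your proposal is correct and follows essentially the same route as the paper: the identical decompositions $\mathcal{K}_i=p_i^2\,\mathcal{F}_i-q_i^2\,e_i^n\L_M(i,:)$ and $\mathcal{L}_i=r_i^2\,\mathcal{G}_i-s_i^2\,\U_M(:,i)(e_i^n)^{\top}$, the bounds $p_i^2,q_i^2,r_i^2,s_i^2\le 1$ with the triangle inequality and $|ab^{\top}|=|a||b^{\top}|$, and the comparison of the resulting sums with $\mathcal{E}_{\a}+\mathcal{E}_{\e}$ and $\mathcal{E}_{\h}+\mathcal{E}_{\g}$ before passing to the $\max$-norm. Your explicit remark that the reassembled $A_i$- and $B_i$-sums are sub-sums over $i=2:n-1$ of quantities indexed by $i=2:n$ is a point the paper glosses over, but it changes nothing of substance.
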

\begin{proof}
	The proof will be followed by observing that
	\begin{align*}
		\mathcal{K}_i=\bmatrix{
			{\bf 0} &{\bf 0}\\-q_i^2M(i,1:i-1)  & {\bf 0}\\ p_i^2M(i+1:n,1:i-1) & {\bf 0}
		}&= -q_i^2\bmatrix{
			{\bf 0} &{\bf 0}\\M(i,1:i-1)  & {\bf 0}\\ {\bf 0}& {\bf 0}
		}+p_i^2\bmatrix{
			{\bf 0} &{\bf 0}\\{\bf 0}  & {\bf 0}\\ M(i+1:n,1:i-1) & {\bf 0}
		}\\
		& =-q_i^2\, e_i^m\L_M(i,:)+p_i^2\,\mathcal{F}_i.
	\end{align*}
	Now, using the properties $|p_i|^2\leq 1$ and $|q_i|^2\leq 1,$ and \eqref{Eq35}, we obtain \begin{align*}
		\nonumber \hspace{2mm}\sum_{i=2}^{n-1}\Big(|M^{\dagger}&\mathcal{K}_iM^{\dagger}|+|M^{\dagger}{M^{\dagger}}^{\top}\mathcal{K}_i^{\top}\E_M|+|\F_M\mathcal{K}_i^{\top}{M^{\dagger}}^{\top}M^{\dagger}|\Big)\leq |M^{\dagger}||\L_MM^{\dagger}|+|M^{\dagger}{M^{\dagger}}^{\top}\L_M^{\top}||\E_M|\\
		&+|\F_M \L_M^{\top}||{M^{\dagger}}^{\top}M^{\dagger}|
		+\sum_{i=2}^{n-1}\Big(|M^{\dagger}\mathcal{F}_iM^{\dagger}|+|M^{\dagger}{M^{\dagger}}^{\top}\mathcal{F}_i^{\top}\E_M|+|\F_M\mathcal{F}_i^{\top}{M^{\dagger}}^{\top}M^{\dagger}|\Big).
	\end{align*}
	Similarly, we can write 
	$	\mathcal{L}_i =-s_i^2\, \U_M(:,i)(e_i^n)^{\top}+r_i^2\,\mathcal{G}_i.$
	Therefore, using $|s_i|^2\leq 1$ and $|r_i|^2\leq 1,$ and \eqref{Eq35}, we get
	\begin{align*}
		\nonumber \hspace{2mm}\sum_{i=2}^{n-1}\Big(|M^{\dagger}&\mathcal{L}_iM^{\dagger}|+|M^{\dagger}{M^{\dagger}}^{\top}\mathcal{L}_i^{\top}\E_M|+|\F_M\mathcal{L}_i^{\top}{M^{\dagger}}^{\top}M^{\dagger}|\Big)\leq |M^{\dagger}\U_M||M^{\dagger}|+|M^{\dagger}{M^{\dagger}}^{\top}||\U_M^{\top}\E_M|\\
		&+|\F_M|| \U_M^{\top}{M^{\dagger}}^{\top}M^{\dagger}|+\sum_{i=2}^{n-1}\Big(|M^{\dagger}\mathcal{G}_iM^{\dagger}|+|M^{\dagger}{M^{\dagger}}^{\top}\mathcal{G}_i^{\top}\E_M|+|\F_M\mathcal{G}_i^{\top}{M^{\dagger}}^{\top}M^{\dagger}|\Big).
	\end{align*}
	Hence, we get the desired relations from the above two inequalities, expressions from the Theorems  \ref{Th51} and \ref{Th53}.
\end{proof}
Proposition \ref{pr56} provides the relationship between CNs for LS problem $(\ref{eqn312})$ for any $\{1,1\}$-QS matrix corresponding to the parameter sets $\Psi_{\mathbb{QS}}$ and $\Psi_{\mathcal{GV}}.$
\begin{proposition}\label{pr56}
	For the	representations $\Psi_{\mathbb{QS}}$ and $\Psi_{\mathcal{GV}}$ of a $\{1,1\}$-QS matrix $M\in \mathbb{R}^{n\times n}$ having rank $r$ and $b\in \mathbb{R}^n,$ following holds:
	\begin{eqnarray*}
		\tilde{\M}^{\dagger}(M(\Psi_{\mathcal{GV}}),b)\leq \tilde{\M}^{\dagger}(M(\Psi_{\mathbb{QS}}),b) \quad \text{and}\quad
		\tilde{\C}^{\dagger}(M(\Psi_{\mathcal{GV}}),b)\leq \tilde{\C}^{\dagger}(M(\Psi_{\mathbb{QS}}),b).		
	\end{eqnarray*} 
\end{proposition}
\subsection{The structured effective CNs}
The expressions in Theorems \ref{Th51} and \ref{Th52} can be computationally very expensive for large matrices due to the involvement of two sums. The effective CN for $\{1,1\}$-QS matrices was initially considered by Dopico and Pom$\acute{e}$s in \cite{dopico2016structured,martinez2016structured} for eigenvalue problem and linear system to reduce the computation complexity. In a similar fashion to avoid such problems, we propose in Definition \ref{def54}, structured effective CNs $\tilde{\M}^{\dagger}_{ f}(M(\Psi_{\mathbb{QS}}))$  and $\tilde{\C}^{\dagger}_{f}(M(\Psi_{\mathbb{QS}})),$ which have similar contribution as  $\tilde{\M}^{\dagger}(M(\Psi_{\mathbb{QS}}))$ and $\tilde{\C}^{\dagger}(M(\Psi_{\mathbb{QS}})),$ respectively.
\begin{definition}\label{def54}
	Let $M(\Psi_{\mathbb{QS}})\in \mathbb{R}^{n\times n}$ having $\rank(M(\Psi_{\mathbb{QS}}))=r.$ Then for $M^{\dagger}(\Psi_{\mathbb{QS}}),$ we define structured  effective MCN and CCN as
	\begin{eqnarray*}
		\tilde{\M}^{\dagger}_{f}(M(\Psi_{\mathbb{QS}}))&:=\frac{\|\mathcal{X}^{\dagger}_{f,\mathbb{QS}}\|_{\max}}{\|M^{\dagger}\|_{\max}}\quad \mbox{and}\quad  \tilde{\C}^{\dagger}_{f}(M(\Psi_{\mathbb{QS}})):=\left\|\frac{\mathcal{X}^{\dagger}_{f,\mathbb{QS}}}{M^{\dagger}}\right\|_{\max},
	\end{eqnarray*}
	where   \begin{align*}
		\mathcal{X}^{\dagger}_{f,\mathbb{QS}}:&=	|M^{\dagger}||\Da_M||M^{\dagger}|+|M^{\dagger}{M^{\dagger}}^{\top}||\Da_M||\E_M|+|\F_M||\Da_M||{M^{\dagger}}^{\top}M^{\dagger}|+|M^{\dagger}||\L_MM^{\dagger}|\\
		&+|M^{\dagger}{M^{\dagger}}^{\top}\L_M^{T}|| \E_M| +|\F_M\L_M^{\top}||{M^{\dagger}}^{\top}M^{\dagger}|+|M^{\dagger}\L_M||M^{\dagger}|+|M^{\dagger}{M^{\dagger}}^{\top}||\L_{M}^{\top}\E_M|\\
		&+|\F_M||\L_M^{\top}{M^{\dagger}}^{\top}M^{\dagger}|+|M^{\dagger}||\U_M M^{\dagger} |+|M^{\dagger}{M^{\dagger}}^{\top}\U_M^{\top}||\E_M|+|\F_M \U_M^{\top}||{M^{\dagger}}^{\top}M^{\dagger}|\\
		&+|M^{\dagger}\U_M||M^{\dagger}|+|M^{\dagger}{M^{\dagger}}^{\top}||\U_M^{\top}\E_M|+|\F_M|| \U_M^{\top}{M^{\dagger}}^{\top}M^{\dagger}|.
	\end{align*}
\end{definition}
The following theorem demonstrates that the contribution of the sum terms in the expression of  $\tilde{\M}^{\dagger}(M(\Psi_{\mathbb{QS}}))$  and $\tilde{\C}^{\dagger}(M(\Psi_{\mathbb{QS}}))$  are negligible and reliably estimated up to a multiple of $n.$
\begin{theorem}\label{Th9}
	Under the same hypothesis as in Definition \ref{def54}, following relations holds 
	\begin{align*}
		&	\tilde{\M}^{\dagger}_{f}(M(\Psi_{\mathbb{QS}}))\leq  \tilde{\M}^{\dagger}(M(\Psi_{\mathbb{QS}}))\leq (n-1)\, \tilde{\M}^{\dagger}_f(M(\Psi_{\mathbb{QS}})),\\
		&\tilde{\C}^{\dagger}_{f}(M(\Psi_{\mathbb{QS}}))\leq  \tilde{\C}^{\dagger}(M(\Psi_{\mathbb{QS}}))\leq (n-1)\, \tilde{\C}^{\dagger}_f(M(\Psi_{\mathbb{QS}})).
	\end{align*}
\end{theorem}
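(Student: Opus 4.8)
The plan is to argue entirely at the level of the nonnegative matrices $\mathcal{X}^{\dagger}_{\mathbb{QS}}$ and $\mathcal{X}^{\dagger}_{f,\mathbb{QS}}$ and only at the end pass to the $\max$ norm (and to the entrywise division by $M^{\dagger}$ for the componentwise case). First I would record the exact relation between the two matrices. Revisiting the proof of Theorem \ref{Th51}, one has $\mathcal{X}^{\dagger}_{\mathbb{QS}}=\mathcal{E}_{\bf d}+\mathcal{E}_{\a}+\mathcal{E}_{\b}+\mathcal{E}_{\f}+\mathcal{E}_{\h}+\mathcal{E}_{\e}+\mathcal{E}_{\g}$, whereas Definition \ref{def54} is precisely this expression with the two parameter-block sums deleted, i.e. $\mathcal{X}^{\dagger}_{f,\mathbb{QS}}=\mathcal{E}_{\bf d}+\mathcal{E}_{\a}+\mathcal{E}_{\b}+\mathcal{E}_{\f}+\mathcal{E}_{\h}$. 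Hence $\mathcal{X}^{\dagger}_{\mathbb{QS}}=\mathcal{X}^{\dagger}_{f,\mathbb{QS}}+\mathcal{E}_{\e}+\mathcal{E}_{\g}$, where all three matrices have nonnegative entries.

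The lower bounds are then immediate. Since $\mathcal{E}_{\e},\mathcal{E}_{\g}\ge{\bf 0}$ entrywise, we get $\mathcal{X}^{\dagger}_{f,\mathbb{QS}}\le\mathcal{X}^{\dagger}_{\mathbb{QS}}$ entrywise, and both $\|\cdot\|_{\max}$ and the map $X\mapsto\|X/M^{\dagger}\|_{\max}$ are monotone on the cone of nonnegative matrices; dividing through by the common factor $\|M^{\dagger}\|_{\max}$ yields $\tilde{\M}^{\dagger}_{f}(M(\Psi_{\mathbb{QS}}))\le\tilde{\M}^{\dagger}(M(\Psi_{\mathbb{QS}}))$ and likewise for the componentwise quantity.

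For the upper bounds the plan is to dominate the two sums $\mathcal{E}_{\e}$ and $\mathcal{E}_{\g}$ by the lower- and upper-triangular contributions already present in $\mathcal{X}^{\dagger}_{f,\mathbb{QS}}$. The structural input is $|\mathcal{F}_i|\le|\L_M|$ and $|\mathcal{G}_i|\le|\U_M|$ for every index (each $\mathcal{F}_i$, resp. $\mathcal{G}_i$, being a submatrix extraction of $\L_M$, resp. $\U_M$, by Lemma \ref{lm51}), exactly the estimate used in Proposition \ref{pr53}. Applying it to each of the three term families inside $\mathcal{E}_{\e}$ and $\mathcal{E}_{\g}$, together with the fact that each sum runs over the $n-2$ indices $i=2:n-1$, I would establish $\mathcal{E}_{\e}+\mathcal{E}_{\g}\le(n-2)\,\mathcal{X}^{\dagger}_{f,\mathbb{QS}}$; adding back the single copy $\mathcal{X}^{\dagger}_{f,\mathbb{QS}}$ gives $\mathcal{X}^{\dagger}_{\mathbb{QS}}\le(n-1)\,\mathcal{X}^{\dagger}_{f,\mathbb{QS}}$, and taking the $\max$ norm (resp. the componentwise norm) then closes the argument. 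The splitting $(n-1)=1+(n-2)$ is precisely the origin of the constant.

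The main obstacle is the bookkeeping of the absolute-value groupings in this last step. A summand such as $|M^{\dagger}\mathcal{F}_iM^{\dagger}|$ is most easily bounded by the fully separated product $|M^{\dagger}||\mathcal{F}_i||M^{\dagger}|\le|M^{\dagger}||\L_M||M^{\dagger}|$, yet the matching terms inside $\mathcal{X}^{\dagger}_{f,\mathbb{QS}}$ are the \emph{grouped} quantities $|M^{\dagger}||\L_MM^{\dagger}|$ and $|M^{\dagger}\L_M||M^{\dagger}|$, which are in general strictly smaller; thus one cannot naively replace $\mathcal{F}_i$ by $\L_M$ after separating the outer factors. The delicate point is therefore to keep $\mathcal{F}_iM^{\dagger}$ (resp. $M^{\dagger}\mathcal{F}_i$) grouped and to exploit the row/column containment of $\mathcal{F}_i$ in $\L_M$, so that each family of $\mathcal{E}_{\e}$ is absorbed into the correspondingly grouped family of $\mathcal{E}_{\a}$ or $\mathcal{E}_{\b}$ (and $\mathcal{E}_{\g}$ into $\mathcal{E}_{\f}$, $\mathcal{E}_{\h}$) at the cost of the factor $n-2$. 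Once this matching is justified, the componentwise inequalities follow verbatim, since the entrywise division by $M^{\dagger}$ preserves order between nonnegative matrices.
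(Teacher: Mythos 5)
Your overall architecture coincides with the paper's: write $\mathcal{X}^{\dagger}_{\mathbb{QS}}=\mathcal{X}^{\dagger}_{f,\mathbb{QS}}+\mathcal{E}_{\e}+\mathcal{E}_{\g}$, get the left inequalities from entrywise nonnegativity, and get the right ones from $\mathcal{E}_{\e}+\mathcal{E}_{\g}\leq (n-2)\,\mathcal{X}^{\dagger}_{f,\mathbb{QS}}$. The gap is that you never prove the one inequality that carries all the content, namely that for each fixed $i$ the grouped triple $|M^{\dagger}\mathcal{F}_iM^{\dagger}|+|M^{\dagger}{M^{\dagger}}^{\top}\mathcal{F}_i^{\top}\E_M|+|\F_M\mathcal{F}_i^{\top}{M^{\dagger}}^{\top}M^{\dagger}|$ is dominated by the six $\L_M$-terms of $\mathcal{X}^{\dagger}_{f,\mathbb{QS}}$. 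You correctly observe that the separated estimate $|\mathcal{F}_i|\leq|\L_M|$ from Proposition \ref{pr53} is useless here (it produces $|M^{\dagger}||\L_M||M^{\dagger}|$, which does not occur in $\mathcal{X}^{\dagger}_{f,\mathbb{QS}}$ and is in general larger than the grouped terms), and you correctly name the obstacle, but ``exploit the row/column containment'' is left as a wish: you write ``once this matching is justified'', and that justification is precisely the proof.

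What closes it in the paper is an \emph{additive splitting}, not a containment. Since $\mathcal{F}_i$ is a two-sided truncation of $\L_M$ (rows $i+1:n$ and columns $1:i-1$), and since a row selection can only be absorbed into the grouping $(\cdot)(\L_MM^{\dagger})$ while a column selection can only be absorbed into $(M^{\dagger}\L_M)(\cdot)$, one must write $\mathcal{F}_i$ as a difference of a purely row-truncated and a purely column-truncated copy of $\L_M$: with $P_i$, $Q_i$ the $0$--$1$ diagonal selectors of rows $i+1:n$ and of columns $i:n$, one has $\mathcal{F}_i=P_i\L_M-\L_MQ_i$ (the second piece equals $P_i\L_MQ_i$ because columns $i:n$ of the strictly lower triangular $\L_M$ are supported in rows $i+1:n$). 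Then $|M^{\dagger}(P_i\L_M)M^{\dagger}|=|(M^{\dagger}P_i)(\L_MM^{\dagger})|\leq|M^{\dagger}||\L_MM^{\dagger}|$ and $|M^{\dagger}(\L_MQ_i)M^{\dagger}|=|(M^{\dagger}\L_M)(Q_iM^{\dagger})|\leq|M^{\dagger}\L_M||M^{\dagger}|$, with the analogous manipulations for the two transposed terms. Consequently each term of $\mathcal{E}_{\e}$ is bounded by the sum of the $\mathcal{E}_{\a}$-type \emph{and} the $\mathcal{E}_{\b}$-type contributions together, not by one ``corresponding'' family as your phrase ``absorbed into $\mathcal{E}_{\a}$ or $\mathcal{E}_{\b}$'' suggests. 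The same splitting of $\mathcal{G}_i$ against $\U_M$ handles $\mathcal{E}_{\g}$, and summing over the $n-2$ indices $i=2:n-1$ yields the factor $n-2$ and hence the constant $n-1$. Without this decomposition your argument does not go through.
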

\begin{proof}
	The inequalities on the left side are trivial. For inequalities on the right side, we set 
	\begin{align*}
		\bmatrix{
			{\bf 0} & {\bf 0}\\
			 M(\Psi_{\mathbb{QS}})(i+1:n,1:i-1) & {\bf 0}}
		&=\bmatrix{
			{\bf 0} \\	\L_M(i+1:n,:)}+ \bmatrix{
			{\bf 0} & {\bf 0}\\
			{\bf 0} &	-\L_M(i+1:n,i:n)}.
	\end{align*}
	By using the above, we get
	\begin{align*}
		\Big(|M^{\dagger}\mathcal{F}_iM^{\dagger}|&+|M^{\dagger}{M^{\dagger}}^{\top}\mathcal{F}_i^{\top}\E_M|+|\F_M\mathcal{F}_i^{\top}{M^{\dagger}}^{\top}M^{\dagger}|\Big)\leq |M^{\dagger}||\L_MM^{\dagger}|+|M^{\dagger}{M^{\dagger}}^{\top}\L_M^{T}|| \E_M| \\
		&+|\F_M \L_M^{\top}||{M^{\dagger}}^{\top}M^{\dagger}|+|M^{\dagger}\L_M||M^{\dagger}|+|M^{\dagger}{M^{\dagger}}^{\top}||\L_{M}^{\top}\E_M|+|\F_M||\L_M^{\top}|{M^{\dagger}}^{\top}M^{\dagger}|.
	\end{align*} 
	Again, 
	$$\bmatrix{
		{\bf 0} & M(\Psi_{\mathbb{QS}})(1:i-1,i+1:n) \\
		{\bf 0} & {\bf 0}
	}=\bmatrix{
		{\bf 0} & \U_M(:,i+1:n)
	}+\bmatrix{
		{\bf 0} &  {\bf 0}\\
		{\bf 0} & -\U_M(i:n,i+1:n)
	}.$$
	By using the above, we get
	\begin{align*}
		\Big(|M^{\dagger}\mathcal{G}_iM^{\dagger}|&+|M^{\dagger}{M^{\dagger}}^{\top}\mathcal{G}_i^{\top}\E_M|+|\F_M\mathcal{G}_i^{\top}{M^{\dagger}}^{\top}M^{\dagger}|\Big)\leq |M^{\dagger}||\U_M M^{\dagger} |+|M^{\dagger}{M^{\dagger}}^{\top}\U_M^{\top}||\E_M|\\
		&+|\F_M \U_M^{\top}||{M^{\dagger}}^{\top}M^{\dagger}|+|M^{\dagger}\U_M||M^{\dagger}|+|M^{\dagger}{M^{\dagger}}^{\top}||\U_M^{\top}\E_M|+|\F_M|| \U_M^{\top}{M^{\dagger}}^{\top}M^{\dagger}|.
	\end{align*}
	Hence, the desired result is straightforward from Definition \ref{def54}.
\end{proof}
Next, we compare the relation among effective CNs to the unstructured CNs given in Corollary \ref{coro31}. 
\begin{proposition}\label{PR56}
	Under the same hypothesis as in Definition \ref{def54}, following relations holds 
	\begin{align*}
		&	\tilde{\M}^{\dagger}_{f}(M(\Psi_{\mathbb{QS}}))\leq  \, 2 \,\,\tilde{\M}^{\dagger}(M) \quad \mbox{and}\quad
		\tilde{\C}^{\dagger}_{f}(M(\Psi_{\mathbb{QS}}))\leq \, 2 \, \,\tilde{\C}^{\dagger}(M).
	\end{align*}
\end{proposition}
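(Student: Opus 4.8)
The plan is to reduce both assertions to a single entrywise majorization and then apply the two norm operations. Concretely, I aim to prove the componentwise inequality $\mathcal{X}^{\dagger}_{f,\mathbb{QS}} \leq 2\,\mathcal{Y}$, where
\[
\mathcal{Y} := |M^{\dagger}||M||M^{\dagger}| + |M^{\dagger}{M^{\dagger}}^{\top}||M^{\top}||\E_M| + |\F_M||M^{\top}||{M^{\dagger}}^{\top}M^{\dagger}|
\]
is exactly the nonnegative matrix appearing inside the $\max$-norm in Corollary \ref{coro31}. Once this holds, both claims are immediate: applying $\|\cdot\|_{\max}$ and dividing by $\|M^{\dagger}\|_{\max}$ gives $\tilde{\M}^{\dagger}_{f}(M(\Psi_{\mathbb{QS}}))\leq 2\,\tilde{\M}^{\dagger}(M)$, while applying the entrywise division by $M^{\dagger}$ followed by $\|\cdot\|_{\max}$ gives the componentwise bound, since both operations are monotone on matrices with nonnegative entries.

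First I would use the splitting \eqref{Eq52}, $M = \L_M + \Da_M + \U_M$, whose three summands have disjoint supports; hence entrywise $|M| = |\Da_M| + |\L_M| + |\U_M|$ and, by transposition, $|M^{\top}| = |\Da_M| + |\L_M^{\top}| + |\U_M^{\top}|$. Next I would partition the fifteen terms of $\mathcal{X}^{\dagger}_{f,\mathbb{QS}}$ from Definition \ref{def54} into three families according to their outer factors, namely $|M^{\dagger}|\cdots|M^{\dagger}|$, $|M^{\dagger}{M^{\dagger}}^{\top}|\cdots|\E_M|$, and $|\F_M|\cdots|{M^{\dagger}}^{\top}M^{\dagger}|$, so that each family is matched to one summand of $\mathcal{Y}$. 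Within each family I apply the submultiplicativity of the absolute value, $|AB|\leq|A||B|$ (the matrix analogue of \eqref{Eq35}), to peel the inner triangular factors out of the grouped products, for instance $|\L_M M^{\dagger}|\leq|\L_M||M^{\dagger}|$ and $|M^{\dagger}{M^{\dagger}}^{\top}\L_M^{\top}|\leq|M^{\dagger}{M^{\dagger}}^{\top}||\L_M^{\top}|$.

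The crux is the resulting multiplicity count. In each family the diagonal block $|\Da_M|$ occurs exactly once, whereas each off-diagonal block $|\L_M|$ and $|\U_M|$ (or its transpose) occurs exactly twice, once from the term in which it sits to the right of the leading factor and once from the term in which it sits to its left. Consequently the first family is dominated by $|M^{\dagger}|\big(|\Da_M| + 2|\L_M| + 2|\U_M|\big)|M^{\dagger}|$, and the other two families by the corresponding expressions with $|M^{\dagger}{M^{\dagger}}^{\top}|,\,|\E_M|$ and $|\F_M|,\,|{M^{\dagger}}^{\top}M^{\dagger}|$. Using the trivial bound $|\Da_M|\leq 2|\Da_M|$, each bracket is majorized by $2\big(|\Da_M|+|\L_M|+|\U_M|\big)=2|M|$ (respectively $2|M^{\top}|$), and summing the three families yields $\mathcal{X}^{\dagger}_{f,\mathbb{QS}}\leq 2\,\mathcal{Y}$, as required.

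There is no genuine obstacle in this argument; the entire content is the bookkeeping in the grouping and peeling steps. The only point demanding care is the multiplicity count: one must verify that each off-diagonal contribution genuinely appears in precisely two of the fifteen listed terms, so that its coefficient is $2$ and no larger, and that the once-occurring diagonal contributions are absorbed via $|\Da_M|\leq 2|\Da_M|$ without inflating the uniform constant beyond $2$.
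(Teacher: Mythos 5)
Your proposal is correct and follows essentially the same route as the paper: the paper's proof consists precisely of asserting the entrywise bound $\mathcal{X}^{\dagger}_{f,\mathbb{QS}}\leq 2\bigl(\vert M^{\dagger}\vert\vert M\vert\vert M^{\dagger}\vert+\vert M^{\dagger}{M^{\dagger}}^{\top}\vert\vert M^{\top}\vert\vert\E_M\vert+\vert\F_M\vert\vert M^{\top}\vert\vert{M^{\dagger}}^{\top}M^{\dagger}\vert\bigr)$ via ``properties of absolute values'' and then invoking Definition \ref{def54} and Corollary \ref{coro31}. You simply make explicit the grouping, the peeling via $|AB|\leq|A||B|$, and the multiplicity count (each off-diagonal block twice, the diagonal block once) that the paper leaves implicit; the bookkeeping checks out.
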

\begin{proof}
	By using properties of absolutes values and expression of $\mathcal{X}^{\dagger}_{f,\mathbb{QS}}$ given the Definition \ref{def54}, we have $$\mathcal{X}^{\dagger}_{f,\mathbb{QS}}\leq \, 2\,  \left(\vert M^{\dagger} \vert \vert M \vert \vert M^{\dagger}\vert + |M^{\dagger}{M^{\dagger}} ^{\top}| |M^{\top}| |\E_M|+|\F_M| |M^{\top}| |{M^{\dagger}} ^{\top}M^{\dagger}|\right).$$
	Hence, the desired results attained from Definition \ref{def54}.
\end{proof}
Next, similarly to the Definition \ref{def54}, we define structured effective CNs for the MNLS solution.
\begin{definition}\label{def55}
	Let $M(\Psi_{\mathbb{QS}})\in \mathbb{R}^{n\times n}$ having $\rank(M(\Psi_{\mathbb{QS}}))=r$ and $b\in \R^m.$ Then, for the MNLS solution $\x$, we define structured effective MCN and CCN as follows:
	\begin{eqnarray*}
		\tilde{\M}^{\dagger}_{f}(M(\Psi_{\mathbb{QS}}),b)&:=\frac{\|\mathcal{X}^{ls}_{f,\mathbb{QS}}\|_{\infty}}{\|\x\|_{\infty}} \quad \text{and} \quad  \tilde{\C}^{\dagger}(M(\Psi_{\mathbb{QS}}),b):=\left\|\frac{\mathcal{X}^{ls}_{f,\mathbb{QS}}}{\x}\right\|_{\infty},
	\end{eqnarray*}
	where  \begin{eqnarray*}
		\mathcal{X}^{ls}_{f,\mathbb{QS}}:&=	|M^{\dagger}||\Da_M||\x|+|M^{\dagger}{M^{\dagger}}^{\top}||\Da_M||\r|+|\F_M||\Da_M||{M^{\dagger}}^{\top}\x|+|M^{\dagger}||\L_M\x|\\
		&+|M^{\dagger}{M^{\dagger}}^{\top}\L_M^{T}|| \r|+|\F_M \L_M^{\top}||{M^{\dagger}}^{\top}\x|+|M^{\dagger}\L_M||\x|+|M^{\dagger}{M^{\dagger}}^{\top}||\L_{M}^{\top}\r|\\
		&+|\F_M||\L_M^{\top}{M^{\dagger}}^{\top}\x|+|M^{\dagger}||\U_M \x |
		+|M^{\dagger}{M^{\dagger}}^{\top}\U_M^{\top}||\r|+|\F_M \U_M^{\top}||{M^{\dagger}}^{\top}\x|\\
		&+|M^{\dagger}\U_M||\x|+|M^{\dagger}{M^{\dagger}}^{\top}||\U_M^{\top}\r|+|\F_M|| \U_M^{\top}{M^{\dagger}}^{\top}\x|+|M^{\dagger}||b|.
	\end{eqnarray*}
\end{definition}
Similar results also hold for the MNLS solution as discussed in Theorem \ref{Th9} and Proposition \ref{PR56}.
\section{Numerical experiments}\label{s6}
We reported a few numerical experiments in this section to illustrate the theoretical findings covered in Sections $\ref{sec4}$ and $\ref{sec5}$, respectively. For all numerical computations, we have used MATLAB R2022b, with unit roundoff errors $10^{-16}.$ Also, for structured and unstructured CNs, we give a comparison between their upper bounds. For unstructured CNs, we use the upper bounds for the M-P inverse and the MNLS solution given in Corollary $\ref{coro31}$ and $\ref{coro34},$ respectively. These examples reveal that structured CNs are significantly less than unstructured ones.
\begin{example}\label{ex1}
	Let $\Psi_{\mathbb{CV}}=[ c=[1,1,\frac{1}{2},\frac{-1}{30},\frac{1}{40}], \, d=[12 , -0.75\times 10^7, 25\times 10^3\}]^{\top}\in \R^8$ be the parameter set of a $5\times 6$ CV matrix $M$. For the M-P inverse, we compute the bounds for the structured MCN and CCN using Theorem \ref{Th41}. 	For the MNLS solution, we generate a random vector $b\in \R^5$ in MATLAB by the command $randn$ and use Theorem \ref{Th43} to compute the structured CNs.  The computed results for the bounds of structured and unstructured CNs are listed in Table \ref{tab1}. We observed that the bounds for the structured CNs are of order one, whereas for the unstructured case, they are significantly large.
	\begin{table}[ht!]
		\begin{center}
		\caption{Comparison between upper bounds of structured and unstructured CNs for $M^{\dagger}(\Psi_{\mathbb{CV}})$ and $M^{\dagger}(\Psi_{\mathbb{CV}},b)$   for Example \ref{ex1}.}%
			 \resizebox{14cm}{!}
   {
				\begin{tabular}{@{}cccccccc@{}}
					\toprule
					$\tilde{\M}^{\dagger}(M)$ & $\tilde{\M}^{\dagger}(M(\Psi_{\mathbb{CV}}))$&  $\tilde{\C}^{\dagger}(M)$ & $\tilde{\C}^{\dagger}(M(\Psi_{\mathbb{CV}}))$&$\tilde{\M}^{\dagger}(M,b)$                         & $\tilde{\M}^{\dagger}(M(\Psi_{\mathbb{CV}}),b)$ & $\tilde{\C}^{\dagger}(M,b)$ & $\tilde{\C}^{\dagger}(M(\Psi_{\mathbb{CV}}),b)$ \\
					\midrule
					$1.9309e+04$ &  $8.4149$ & $2.6103e+06$ & $63.7873$ &$3.9137e+04$ & $12.3655 $                                      & $2.1903e+06$        & $12.3655$   \\
					\toprule
			\end{tabular}}
   \label{tab1}
		\end{center}
	\end{table}
 \end{example}
 \begin{example}\label{Ex1}\cite{yang2021accurate}
      \begin{table}[h!]
		\begin{center}
		\caption{Comparison between upper bounds of structured and unstructured CNs for $M^{\dagger}(\Psi_{\mathbb{CV}})$ and $M^{\dagger}(\Psi_{\mathbb{CV}},b)$   for Example \ref{Ex1}.}%
			\resizebox{15cm}{!}{
				\begin{tabular}{@{}cccccccc@{}}
					\toprule
			$\tilde{\M}^{\dagger}(M)$ & $\tilde{\M}^{\dagger}(M(\Psi_{\mathbb{CV}}))$&  $\tilde{\C}^{\dagger}(M)$ & $\tilde{\C}^{\dagger}(M(\Psi_{\mathbb{CV}}))$&$\tilde{\M}^{\dagger}(M,b)$                         & $\tilde{\M}^{\dagger}(M(\Psi_{\mathbb{CV}}),b)$ & $\tilde{\C}^{\dagger}(M,b)$ & $\tilde{\C}^{\dagger}(M(\Psi_{\mathbb{CV}}),b)$ \\
					\midrule
					$1.1568e+05$&	$8.5419e+01$ &	$5.3826e+07$&	$3.3542e+04$&	$1.8008e+05	$& $8.9389e+01$&	$3.6180e+05$ &	$1.7959e+02$  \\
					\toprule
			\end{tabular}}
   \label{Tab1}
		\end{center}
  
	\end{table}
 Let $M\in \R^{12\times 20}$ be a CV matrix as in Definition \ref{def51} with the parameter set $\Psi_{\mathbb{CV}}=[\{c_i\}_{i=1}^{12},\{d_i\}_{i=1}^8]^{\top}\in \mathbb{R}^{20},$ where
      \begin{align}
        \left\{ \begin{array}{lcl}
      c_i=\frac{i}{20}, \quad i=1:12,&  \\
       d_j=\frac{j+4}{50},\quad j=1:8.
    \end{array}\right.
    \end{align}
     	For the MNLS solution, we generate a random vector $b\in \R^{12}$ in MATLAB by the command $randn$.  The computed results for the bounds of structured and unstructured CNs are listed in Table \ref{Tab1}. We observed that the bounds for the structured CNs are less than an order of $3$ or $4$ compared to the unstructured case.
  
  \end{example}
    
\begin{example}\label{NEW1}
   Let $M\in \R^{m\times n}$ be a CV matrix as in Definition \ref{def51} with the parameter set $\Psi_{\mathbb{CV}}=[c=\{c_i\}_{i=1}^{m},d=\{d_i\}_{i=1}^l]^{\top}\in \mathbb{R}^{m+l},$ where $c=0.5\, randn(m,1)$ and $d= randn(l,1),$ generate  by the command $randn$ in MATLAB. For the MNLS solution, we take $b\in \R^{m}$. We choose $n=20$ and $l=5.$
    \begin{table}[ht!]
		\begin{center}
		\caption{Comparison between upper bounds of structured and unstructured CNs for $M^{\dagger}(\Psi_{\mathbb{CV}})$ and $M^{\dagger}(\Psi_{\mathbb{CV}},b)$   for Example \ref{NEW1}.}%
			\resizebox{16cm}{!}{
				\begin{tabular}{@{}ccccccccc@{}}
					\toprule
				$m$	&	$\tilde{\M}^{\dagger}(M)$ & $\tilde{\M}^{\dagger}(M(\Psi_{\mathbb{CV}}))$&  $\tilde{\C}^{\dagger}(M)$ & $\tilde{\C}^{\dagger}(M(\Psi_{\mathbb{CV}}))$&$\tilde{\M}^{\dagger}(M,b)$                         & $\tilde{\M}^{\dagger}(M(\Psi_{\mathbb{CV}}),b)$ & $\tilde{\C}^{\dagger}(M,b)$ & $\tilde{\C}^{\dagger}(M(\Psi_{\mathbb{CV}}),b)$\\
    \midrule
   $100$& $5.2481e+04$&$8.5293e+01$&	$9.0595e+07$	&$2.8669e+05$&	$4.4045e+04$	&$1.5229e+02$	&$5.5927e+04$&	$4.3270e+02$	\\
    \midrule
  $125$ &  $3.4639e+04$&	$6.4288e+01$&	$6.1046e+07$&	$2.5378e+05$&	$7.5900e+04$&	$2.3810e+02$	&$1.1277e+06$&	$5.4225e+03$\\
    \midrule
   $150$&$ 1.9732e+04$&	$5.7075e+01$&	$2.9498e+07$&	$1.0762e+05$&	$1.0708e+04$&	$6.2211e+01$&	$3.0915e+04$&	$2.8760e+02$\\
    \midrule      
 $175$ &    $8.6870e+04$&	$8.6711e+01$&	$3.1972e+07$&	$1.1023e+05$&	$6.6658e+04$&	$2.1889e+02$&	$5.5834e+05$	&$7.4617e+03$\\
					\midrule
				$200$&	$1.4574e+04$	&$8.7096e+01$&	$8.9156e+07$	&$3.0088e+05$	&$8.5138e+03$& $7.8091e+01$	& $3.6041e+05$	&$4.3147e+03$  \\
					\toprule
			\end{tabular}}%
   
   \label{TABLE3}
		\end{center}
	\end{table}
The computed results for the bounds of structured and unstructured CNs are listed in Table \ref{TABLE3} for different values of $m$ ranging from $100$ to $200$ with step size $25$.  We observed that the structured CNs have significantly tighter bounds, almost $2$ or $3$ order smaller than those of the unstructured case for large matrices.
\end{example}

\begin{example}\label{ex2}
	We consider a random $\{1,1\}$-QS matrix $M.$ For that, we first fixed $n =5$ and use command \emph{randn} to generate the vectors:
	$$\a\in \R^{n-1}, \, \, \b\in \R^{n-1},\, \, \e\in \R^{n-2}, \, \, {\bf d} \in \R^n, \,\, \f\in \R^{n-1},\, \, \g\in \R^{n-2} \, \text{and} \, \, \h\in \R^{n-1}. $$
	Using these vectors, we computed the $\{1,1\}$-QS matrix defined as \eqref{eq51}. In Table \ref{tab3}, we see that for $M^{\dagger}(\Psi_{\mathbb{QS}})$ the bounds for structured CNs corresponding to $\Psi_{\mathbb{QS}}$ and structured effective CNs are substantially smaller than the unstructured ones. This validates the results of Proposition \ref{pr53}, Theorem \ref{Th9} and Proposition \ref{PR56}. 
	\begin{table}[h]
		\begin{center}
			\tbl{{ Comparison between upper bounds of unstructured, structured and effective CNs for the M-P inverse of $\{1,1\}$-QS matrices for Example \ref{ex2}.}}%
   {
				\begin{tabular}{@{}cccccc@{}}
					\toprule
					$\tilde{\M}^{\dagger}(M)$ & $\tilde{\M}^{\dagger}(M(\Psi_{\mathbb{QS}}))$&  $\tilde{\M}^{\dagger}_{f}(M(\Psi_{\mathbb{QS}}))$&$\tilde{\C}^{\dagger}(M)$ & $\tilde{\C}^{\dagger}(M(\Psi_{\mathbb{QS}}))$ &  $\tilde{\C}^{\dagger}_{f}(M(\Psi_{\mathbb{QS}}))$\\[1ex]
					\midrule
					$817.7856$ &  $3$ &$ 2$ &$ 5.3877e+11$ & $2.4189e+05$ & $1.5682e+05$ \\[1ex]
					\toprule
			\end{tabular}}
   \label{tab3}
		\end{center}
	\end{table}
	Next, for the MNLS solution, we generate a random vector $b\in \R^n$ by the command \emph{randn}. Table \ref{tab4} illustrates that the results obtained for the computed bounds for structured CNs, effective CNs for MNLS solution are much smaller than those for unstructured ones and consistent with Proposition \ref{pr54}.
	\begin{table}[h]
		\begin{center}
			\caption{{ Comparison between upper bounds of unstructured, structured and effective CNs for the MNLS solution for $\{1,1\}$-QS matrices for Example \ref{ex2}.}}\label{tab4}%
			\resizebox{12cm}{!}{
				\begin{tabular}{@{}cccccc@{}}
					\toprule
				$\tilde{\M}^{\dagger}(M,b)$ & $\tilde{\M}^{\dagger}(M(\Psi_{\mathbb{QS}}),b)$&  $\tilde{\M}^{\dagger}_{f}(M(\Psi_{\mathbb{QS}}),b)$&$\tilde{\C}^{\dagger}(M,b)$ & $\tilde{\C}^{\dagger}(M(\Psi_{\mathbb{QS}}),b)$&  $\tilde{\C}^{\dagger}_{f}(M(\Psi_{\mathbb{QS}}),b)$\\ [1ex]
					\midrule
					$1.7838e+03$ & $4.0024$ & $3.0016$&$2.3669e+03$ & $8.8136$ & $6.5184$\\[1ex]
					\bottomrule
			\end{tabular}}
		\end{center}
	\end{table}
\end{example}
\begin{example}\label{ex3}
	We consider several $\{1,1\}$-QS matrices of different orders. In fact, we choose $n=5,$ $n=7$ and $n=10.$  We generate the random vectors $\a,\b, \e,{\bf d}, \f, \g,$ and $\h$ by the command \emph{randn} in MATLAB as in the Example \ref{ex2}. After generating these vectors,  we take following scaling $$ \a=a*10^k,\, \, \e=e*10^k, \, \, \h=h*10^k, $$
	where $ k\in \{-1,-2,0,1,2,3\}$ to get unbalanced lower and upper right corner.\\
	In Table \ref{tab5}, we compare $\tilde{\M}^{\dagger}_{f}(M(\Psi_{\mathbb{QS}}))$ and $\tilde{\C}^{\dagger}_{f}(M(\Psi_{\mathbb{QS}}))$ with $\tilde{\M}^{\dagger}(M)$ and $\tilde{\C}^{\dagger}(M),$ respectively, for the M-P inverse with different values of $n.$ 
	\begin{table}[ht!]
		\centering
		\caption{Comparison between the upper bounds of  unstructured, structured and  effective CNs for the M-P inverse and the MNLS solution  of $\{1,1\}$-QS matrices  for Example \ref{ex3}.}\label{tab5}
		\resizebox{15cm}{!}{
			\begin{tabular}{@{}cccccccccc@{}}
				\toprule
				$n$ & $\tilde{\M}^{\dagger}(M)$&  $\tilde{\M}^{\dagger}_{f}(M(\Psi_{\mathbb{QS}}))$& $\tilde{\C}^{\dagger}(M)$ & $\tilde{\C}^{\dagger}_{f}(M(\Psi_{\mathbb{QS}}))$&$\tilde{\M}^{\dagger}(M,b)$&  $\tilde{\M}^{\dagger}_{f}(M(\Psi_{\mathbb{QS}}),b)$& $\tilde{\C}^{\dagger}(M,b)$ & $\tilde{\C}^{\dagger}_{f}(M(\Psi_{\mathbb{QS}}),b)$\\ [1ex]
				\midrule
				5&	1.4330e+04& 2 & 2.1690e+{12} &  910&	9.333e+04 & 3.0030 &1.1910e+05 &  4.5451\\
				\midrule
				7& 5.9139e+03 & 3.0246 & 9.2340e+08 & 74.2974& 6.5311e+04 & 4.6655 & 2.0255e+ 04 & 6.7386  \\
				\midrule
				10  & 7.3293e+04&  2.0101&  1.5858e+10& 94.0499 & 6.3988e+04&  1.0127&  2.5381e+05& 11.1271 \\ [1ex]
				\toprule
		\end{tabular}}
	\end{table}
	For the MNLS solution, we generate a random vector $b\in \R^n$ for each choice of $n.$ The computed bounds for the CNs are listed in Table \ref{tab5}. These results demonstrate the reliability of proposed CNs.
\end{example}
\begin{example}\label{ex4}
	In this example, we compare structured MCN and CCN  with respect to QS representation and GV representation through tangent, structured effective CNs with their unstructured ones for M-P inverse and MNLS solution. On account of these, we generate the random vectors $t\in \R^{n-2},u\in \R^{n-1},v\in \R^{n-1},w\in \R^{n-1}$ in MATLAB by the comand {\textit{randn}}  for the parameter set $\Psi_{\mathcal{GV}}.$ We set $\d={\bf 0}\in \R^n$ and rescale the vector $v$ as $v(1)=0$ and $v(n-1)=10^2.$ Then, we compute the $\{1,1\}$-QS matrix $M$ as in Definition \ref{def52}. For different values of $n,$ we generate $100$ rank deficient $\{1,1\}$-QS matrices. We use the formulae provided in Theorem \ref{Th53} to compute the upper bounds $\tilde{\M}^{\dagger}(M(\Psi_{\mathcal{GV}})) $ and $\tilde{\C}^{\dagger}(M(\Psi_{\mathcal{GV}})).$ Again, we use the formulae for $\tilde{\M}^{\dagger}_{f}(M(\Psi_{\mathbb{QS}}))$ and $\tilde{\C}^{\dagger}_{f}(M(\Psi_{\mathbb{QS}}))$ presented as in Definition \ref{def54}, and for $\tilde{\M}^{\dagger}(M(\Psi_{\mathbb{QS}}))$ and $\tilde{\C}^{\dagger}(M(\Psi_{\mathbb{QS}}))$ presented as in Theorem \ref{Th51}. 
	
	We computed the above values for $100$ randomly generated $\{1,1\}$-QS matrices for $n=30, 40,50$ and $60.$ In Table \ref{tab7},  average values of each upper bound of the CNs for the M-P inverse of these $\{1,1\}$-QS matrices are listed.
	
	\begin{table}[ht!]
		\centering
		\caption{Comparison between upper bounds of unstructured, structured and  effective CNs  for M-P inverse of $\{1,1\}$-QS matrices  for Example \ref{ex4}.}\label{tab7}
		\resizebox{16cm}{!}{
			\begin{tabular}{@{}cccccccccc@{}}
				\toprule
				mean	&	$n$ & $\tilde{\M}^{\dagger}(M)$& $\tilde{\M}^{\dagger}(M(\Psi_{\mathcal{GV}}))$ & $\tilde{\M}^{\dagger}(M(\Psi_{\mathbb{QS}}))$& $\tilde{\M}^{\dagger}_{f}(M(\Psi_{\mathbb{QS}}))$& $\tilde{\C}^{\dagger}(M)$ & $\tilde{\C}^{\dagger}(M(\Psi_{\mathcal{GV}})) $ &$\tilde{\C}^{\dagger}(M(\Psi_{\mathbb{QS}}))$& $\tilde{\C}^{\dagger}_{f}(M(\Psi_{\mathbb{QS}}))$\\ [1ex]
				\midrule
				& $30$ & $1.0667e+02$ & $7.4873e+01$ & $1.2388e+02$ & $8.9215e+01$ & $2.3780e+05$ & $1.2730e+04$ & $2.2102e+04$ & $1.5815e+04$ \\[1ex]
				\midrule
				& $40$ & $1.1429e+02$ & $7.4757e+01$ & $1.2356e+02$ & $8.9427e+01$ & $6.1267e+08$ & $1.4452e+05$ &$ 2.3753e+05$ & $1.7285e+05$ \\[1ex]
				\midrule
				& $50 $&$ 1.6711e+02$ & $1.0323e+02$ & $1.7182e+02$ & $1.2296e+02$ &$ 6.9215e+06$ & $1.6133e+05$ & $3.2430e+05$ & $2.1980e+05$ \\[1ex]
				\midrule
				& $60$ & $3.2135e+02$ & $1.8140e+02$ & $2.9452e+02$ & $2.1359e+02$ & $2.7856e+07$ & $3.0500e+05 $& $5.2242e+05$ & $3.9791e+05$ \\[1ex]
				\toprule
			\end{tabular}
		}
	\end{table}
	\begin{table}[ht!]
		\centering
		\caption{Comparison between upper bounds of unstructured, structured and effective CNs for the MNLS solution for $\{1,1\}$-QS matrices for Example \ref{ex4}.}
		\label{tab8}
		\resizebox{16cm}{!}{
			\begin{tabular}{@{}cccccccccc@{}}
				\toprule
				mean	&	$n$ & $\tilde{\M}^{\dagger}(M,b)$& $\tilde{\M}^{\dagger}(M(\Psi_{\mathcal{GV}}),b)$ & $\tilde{\M}^{\dagger}(M(\Psi_{\mathbb{QS}}),b)$& $\tilde{\M}^{\dagger}_{f}(M(\Psi_{\mathbb{QS}}),b)$& $\tilde{\C}^{\dagger}(M,b)$ & $\tilde{\C}^{\dagger}(M(\Psi_{\mathcal{GV}}),b) $ &$\tilde{\C}^{\dagger}(M(\Psi_{\mathbb{QS}}),b)$& $\tilde{\C}^{\dagger}_{f}(M(\Psi_{\mathbb{QS}}),b)$\\ [1ex]
				\midrule
				& $30 $& $1.1278e+02$ & $ 7.8851e+01$ & $ 2.6113e+02$ & $ 9.4414e+01$ & $ 4.6667e+03$ & $ 1.7445e+03$ & $ 2.2102e+04 $& $ 2.1646e+03$ \\[1ex]
				\midrule
				& $40 $& $1.1990e+02$ & $7.7823e+01$ & $2.4226e+02$ & $9.3005e+01$ & $7.3841e+03$ &$ 4.1774e+03$ & $2.3753e+05$ & $5.2069e+03$ \\ [1ex]
				\midrule
				& $50$ & $1.6512e+02$ & $1.0080e+02$ & $3.2184e+02$ & $1.2072e+02$ & $9.2143e+03$ & $4.2351e+03$ & $3.2430e+05 $&$ 5.3584e+03$ \\ [1ex]
				\midrule
				& $60$ &$ 3.3149e+02$ & $1.8715e+02$ &$ 7.6179e+02$ & $2.2120e+02$ & $7.0839e+04$ & $5.9482e+04 $& $5.2242e+05$ & $7.2054e+04$ \\ [1ex]
				\toprule
			\end{tabular}
		}
	\end{table}
 Next, we generate a random vector $b\in \R^n,$ for each value of $n,$ and upper bounds for the structured CNs with respect to QS representation and  GV representation through tangent, structured effective CNs and unstructured CNs for the MNLS solution are listed in Table \ref{tab8}.
 
 The computed results in Tables \ref{tab7} and \ref{tab8} shows the  consistency of  Propositions \ref{pr53}--\ref{PR56} and Theorem \ref{Th9}. We can observe that the upper bounds of CNs for GV representation through tangent give more reliable bounds compared to the other CNs. 
\end{example}

\begin{example}\label{exam67}
   In this example, we consider  $\{1,1\}$-QS matrices of different orders. To construct  $\{1,1\}$-QS matrices using the formula given in Definition \ref{def52}, we generate the random vectors $t\in \R^{n-2},u\in \R^{n-1},v\in \R^{n-1},w\in \R^{n-1}$ as in Example \ref{ex4}. Moreover, we generate $\d=randn(n)\in \R^n$ and set $\d(1)=0$. Further, we rescale $v$ by setting $v(1)=0$ and $v(n-1)=1.$ For MNLS solution, we choose $b=randn(n)\in \R^{n}.$
   \begin{table}[ht!]
		\centering
		\caption{ Comparison between upper bounds of unstructured, structured and  effective CNs  for M-P inverse of $\{1,1\}$-QS matrices  for Example \ref{exam67}.}
   \label{tab9}
		\resizebox{16cm}{!}{
			\begin{tabular}{@{}ccccccccc@{}}
				\toprule
					$n$ & $\tilde{\M}^{\dagger}(M)$& $\tilde{\M}^{\dagger}(M(\Psi_{\mathcal{GV}}))$ & $\tilde{\M}^{\dagger}(M(\Psi_{\mathbb{QS}}))$& $\tilde{\M}^{\dagger}_{f}(M(\Psi_{\mathbb{QS}}))$& $\tilde{\C}^{\dagger}(M)$ & $\tilde{\C}^{\dagger}(M(\Psi_{\mathcal{GV}})) $ &$\tilde{\C}^{\dagger}(M(\Psi_{\mathbb{QS}}))$& $\tilde{\C}^{\dagger}_{f}(M(\Psi_{\mathbb{QS}}))$\\ [1ex]
				\midrule
				$100$ & $4.3964e+02$&	$3.6260e+02$&	$5.3677e+02$&	$3.9861e+02$&	$4.5243e+07$&	$2.4837e+04$&	$3.6640e+04$&	$2.7217e+04$ \\[1ex]
				\midrule
				$150$ & $4.1692e+01$ &	$3.6846e+01$ & $	5.2059e+01$&	$4.3613e+01$	& $ 1.0494e+09$ &	$7.5769e+06$ &	$1.1606e+07$ &	$9.2937e+06$\\[1ex]
				\midrule
				$200 $&$3.3461e+02$&	$1.9504e+02$	&$2.9550e+02$&	$2.4237e+02$	&$ 1.5109e+07$&	$1.0192e+05$ & $	1.9086e+06$ & $1.4250e+05$\\[1ex]
				\midrule
   $250$ &$2.1300e+02$&	$1.3904e+02$&	$2.1856e+02$&	$1.5019e+02$&	$4.8391e+08$&	$2.4495e+06$&	$4.1373e+06$&	$3.1253e+06$\\[1ex]
    \midrule
				$300$ & $2.0939e+02$&$	9.8454e+01$ &	$1.6871e+02$	& $ 1.1458e+02$ &	$1.1827e+09$&	$1.6759e+06$&	$1.7875e+07$	& $1.7776e+06$ \\[1ex]
				\toprule
			\end{tabular}
		}
	\end{table}
 \begin{table}[h!]
		\centering
		\caption{ Comparison between upper bounds of unstructured, structured and effective CNs for the MNLS solution for $\{1,1\}$-QS matrices for Example \ref{exam67}.} 
		\label{tab10}
		\resizebox{16cm}{!}{
			\begin{tabular}{@{}ccccccccc@{}}
				\toprule
					$n$ & $\tilde{\M}^{\dagger}(M,b)$& $\tilde{\M}^{\dagger}(M(\Psi_{\mathcal{GV}}),b)$ & $\tilde{\M}^{\dagger}(M(\Psi_{\mathbb{QS}}),b)$& $\tilde{\M}^{\dagger}_{f}(M(\Psi_{\mathbb{QS}}),b)$& $\tilde{\C}^{\dagger}(M,b)$ & $\tilde{\C}^{\dagger}(M(\Psi_{\mathcal{GV}}),b) $ &$\tilde{\C}^{\dagger}(M(\Psi_{\mathbb{QS}}),b)$& $\tilde{\C}^{\dagger}_{f}(M(\Psi_{\mathbb{QS}}),b)$\\ [1ex]
				\midrule
				$100 $& $4.3739e+02$&	$3.6376e+02$&	$2.0884e+02$	&$4.0061e+02$&	$6.1525e+0$3&	$4.8648e+02$& $	2.5191e+03$&	$5.4354e+03	$ \\[1ex]
				\midrule
				 $150 $& $4.7556e+01$	&$4.3981e+01$&	$7.0553e+01$	&$5.1641e+01$	&$5.3904e+03$&	$3.3664e+02$&	$1.2982e+03$&	$3.8831e+02$ \\ [1ex]
				\midrule
				$200$ &$3.4402e+02$& $2.0275e+02$&	$1.0998e+02$&	$2.5110e+02$&	$1.4342e+04$&	$5.3603e+03$&	$1.1052e+04$&	$6.5164e+03$\\ [1ex]
				\midrule
   $250$& $1.0986e+02$&$	1.0489e+02$	&$1.8630e+02$&	$1.1328e+02$&	$8.4676e+04$&	$5.1797e+03$	&$1.0935e+04$&$	6.0154e+03$\\ [1ex]
    \midrule 
				 $300$ &$2.1839e+02$&	$1.0345e+02$&	$2.6652e+02$&	$1.2097e+02$&	$3.4116e+03$	&$1.4340e+03$&	$6.5971e+03$&	$1.7712e+03$\\ [1ex]
				\toprule
			\end{tabular}
		}
	\end{table}
For different values of $n,$ the computed upper bounds for structured CNs with respect to QS representation and GV representation through tangent, structured effective CNs and unstructured CNs for M-P inverse and MNLS solution are reported in Tables \ref{tab9} and \ref{tab10}. These results confirm that our proposed upper bounds are reliable for large matrices, and structured ones are much sharper and smaller than unstructured ones.
\end{example}

\section{Conclusion}\label{s7}
For the M-P inverse and the MNLS solution, we investigated structured MCN and CCN corresponding to a class of parameterized matrices, with each entry as a differentiable function of some real parameters. This framework has been used to derive the upper bounds of structured CNs for CV and $\{1,1\}$-QS matrices. QS representation and the GV representation through tangent are considered for $\{1,1\}$-QS matrices to investigate their structured CNs. It is proved that upper bounds for the structured CNs for GV representation through tangent are always smaller than the QS representation. Numerical examples demonstrate that the proposed structured effective CNs are significantly smaller in most cases. Our methodology can be applied to investigate CNs involving other rank-structured matrices. It would be interesting to generalize our findings to the weighted M-P inverse and weighted LS problems, as well as linear LS problems with multiple right-hand sides. Following that, research progress on the aforementioned topics will be reported elsewhere.
\section*{Acknowledgments}
We gratefully acknowledge the financial assistance (File no. $09/1022(0098)/2020$-EMR-I) provided as a fellowship to Pinki Khatun by the Council of Scientific $\&$ Industrial Research (CSIR), New Delhi, India. The authors are grateful to anonymous reviewers for their constructive comments and suggestions, which have contributed to enhancing the quality of this paper. We extend our sincere gratitude to Prof. Huaian Diao for providing us with the thesis \cite{thesis}.
\section*{Declarations}
{\bf Conflict of interest} The authors declare no competing interests.
\bibliography{reference}

\begin{thebibliography}{39}
\providecommand{\natexlab}[1]{#1}
\providecommand{\url}[1]{\texttt{#1}}
\expandafter\ifx\csname urlstyle\endcsname\relax
  \providecommand{\doi}[1]{doi: #1}\else
  \providecommand{\doi}{doi: \begingroup \urlstyle{rm}\Url}\fi

\bibitem[Ben-Israel(1966)]{ben1966error}
A.~Ben-Israel.
\newblock On error bounds for generalized inverses.
\newblock \emph{SIAM J. Numer. Anal.}, 3:\penalty0 585--592, 1966.

\bibitem[B{\"u}rgisser and Cucker(2013)]{burgisser2013condition}
P.~B{\"u}rgisser and F.~Cucker.
\newblock \emph{Condition: The Geometry of Numerical Algorithms}, volume 349.
\newblock Springer Science \& Business Media, 2013.

\bibitem[Chandrasekaran et~al.(2002)Chandrasekaran, Dewilde, Gu, Pals, and
  van~der Veen]{chandrasekaran2002fast}
S.~Chandrasekaran, P.~Dewilde, M.~Gu, T.~Pals, and A.~J. van~der Veen.
\newblock Fast stable solver for sequentially semi-separable linear systems of
  equations.
\newblock In \emph{International Conference on High-Performance Computing},
  pages 545--554, Springer, 2002.

\bibitem[Chountasis et~al.(2009)Chountasis, Katsikis, and Pappas]{imagerest}
S.~Chountasis, V.~N. Katsikis, and D.~Pappas.
\newblock Applications of the {M}oore-{P}enrose inverse in digital image
  restoration.
\newblock \emph{Math. Probl. Eng.}, page~12, 2009.

\bibitem[Chountasis et~al.(2010)Chountasis, Katsikis, and Pappas]{imagerecon}
S.~Chountasis, V.~N. Katsikis, and D.~Pappas.
\newblock Digital image reconstruction in the spectral domain utilizing the
  {M}oore-{P}enrose inverse.
\newblock \emph{Math. Probl. Eng.}, page~14, 2010.

\bibitem[Colton and Kress(1998)]{colton1998inverse}
D.~Colton and R.~Kress.
\newblock \emph{Inverse {A}coustic and {E}lectromagnetic {S}cattering
  {T}heory}, volume~93 of \emph{Appl. Math. Sci.}
\newblock Springer-Verlag, Berlin, {S}econd edition, 1998.

\bibitem[Cucker and Diao(2007)]{cucker2007mixed2}
F.~Cucker and H.~Diao.
\newblock Mixed and componentwise condition numbers for rectangular structured
  matrices.
\newblock \emph{Calcolo}, 44\penalty0 (2):\penalty0 89--115, 2007.

\bibitem[Cucker et~al.(2007)Cucker, Diao, and Wei]{cucker2007mixed}
F.~Cucker, H.~Diao, and Y.~Wei.
\newblock On mixed and componentwise condition numbers for {M}oore-{P}enrose
  inverse and linear least squares problems.
\newblock \emph{Math. Comp.}, 76\penalty0 (258):\penalty0 947--963, 2007.

\bibitem[Diao and Meng(2019)]{diao2019structured}
H.~Diao and Q.~Meng.
\newblock Structured generalized eigenvalue condition numbers for parameterized
  quasiseparable matrices.
\newblock \emph{BIT}, 59\penalty0 (3):\penalty0 695--720, 2019.

\bibitem[Diao and Wei(2007)]{NCN2007}
H.~Diao and Y.~Wei.
\newblock On {F}robenius normwise condition numbers for {M}oore-{P}enrose
  inverse and linear least-squares problems.
\newblock \emph{Numer. Linear Algebra Appl.}, 14\penalty0 (8):\penalty0
  603--610, 2007.

\bibitem[Diao et~al.(2023)Diao, Li, and Meng]{diao2023structured}
H.~Diao, L.~Li, and Q.~Meng.
\newblock Structured condition numbers for sylvester matrix equation with
  parameterized quasiseparable matrices.
\newblock \emph{Comm. Anal. Comp.}, 1\penalty0 (3):\penalty0 183--213, 2023.

\bibitem[Dopico and Pom\'{e}s(2016{\natexlab{a}})]{dopico2016structured}
F.~M. Dopico and K.~Pom\'{e}s.
\newblock Structured eigenvalue condition numbers for parameterized
  quasiseparable matrices.
\newblock \emph{Numer. Math.}, 134\penalty0 (3):\penalty0 473--512,
  2016{\natexlab{a}}.

\bibitem[Dopico and Pom\'{e}s(2016{\natexlab{b}})]{martinez2016structured}
F.~M. Dopico and K.~Pom\'{e}s.
\newblock Structured condition numbers for linear systems with parameterized
  quasiseparable coefficient matrices.
\newblock \emph{Numer. Algorithms}, 73\penalty0 (4):\penalty0 1131--1158,
  2016{\natexlab{b}}.

\bibitem[Eidelman and Gohberg(1999)]{eidelman1999new}
Y.~Eidelman and I.~Gohberg.
\newblock On a new class of structured matrices.
\newblock \emph{Integral Equations Operator Theory}, 34\penalty0 (3):\penalty0
  293--324, 1999.

\bibitem[Gratton(1996)]{Gratton1996}
S.~Gratton.
\newblock On the condition number of linear least squares problems in a
  weighted {F}robenius norm.
\newblock \emph{BIT}, 36\penalty0 (3):\penalty0 523--530, 1996.

\bibitem[Greengard and Rokhlin(1991)]{greengard1991numerical}
L.~Greengard and V.~Rokhlin.
\newblock On the numerical solution of two-point boundary value problems.
\newblock \emph{Comm. Pure Appl. Math.}, 44\penalty0 (4):\penalty0 419--452,
  1991.

\bibitem[Higham(2002)]{higham2002accuracy}
N.~J. Higham.
\newblock \emph{Accuracy and {S}tability of {N}umerical {A}lgorithms}.
\newblock SIAM, Philadelphia, {S}econd edition, 2002.

\bibitem[Huang(2019)]{huang2019accurate}
R.~Huang.
\newblock Accurate solutions of weighted least squares problems associated with
  rank-structured matrices.
\newblock \emph{Appl. Numer. Math.}, 146:\penalty0 416--435, 2019.

\bibitem[Imhof and Studden(2001)]{imhof2001optimal}
L.~A. Imhof and W.~J. Studden.
\newblock {$E$}-optimal designs for rational models.
\newblock \emph{Ann. Statist.}, 29\penalty0 (3):\penalty0 763--783, 2001.

\bibitem[Lee and Greengard(1997)]{lee1997fast}
J.~Y. Lee and L.~Greengard.
\newblock A fast adaptive numerical method for stiff two-point boundary value
  problems.
\newblock \emph{SIAM J. Sci. Comput.}, 18\penalty0 (2):\penalty0 403--429,
  1997.

\bibitem[Li et~al.(2013)Li, Xu, and Wei]{li2013note}
Z.~Li, Q.~Xu, and Y.~Wei.
\newblock A note on stable perturbations of {M}oore-{P}enrose inverses.
\newblock \emph{Numer. Linear Algebra Appl.}, 20\penalty0 (1):\penalty0 18--26,
  2013.

\bibitem[Malyshev(2003)]{Malyshev2003ls}
A.~N. Malyshev.
\newblock A unified theory of conditioning for linear least squares and
  {T}ikhonov regularization solutions.
\newblock \emph{SIAM J. Matrix Anal. Appl.}, 24\penalty0 (4):\penalty0
  1186--1196, 2003.

\bibitem[Meng et~al.(2020)Meng, Diao, and Yu]{meng2020structured}
Q.~Meng, H.~Diao, and Q.~Yu.
\newblock Structured condition number for multiple right-hand side linear
  systems with parameterized quasiseparable coefficient matrix.
\newblock \emph{J. Comput. Appl. Math.}, 368:\penalty0 112527, 20, 2020.

\bibitem[M\"{u}hlbach(2000)]{muhlbach2000interpolation}
G.~M\"{u}hlbach.
\newblock Interpolation by {C}auchy-{V}andermonde systems and applications.
\newblock \emph{J. Comput. Appl. Math.}, 122\penalty0 (1-2):\penalty0 203--222,
  2000.

\bibitem[Rao and Mitra(1971)]{Rao}
C.~R. Rao and S.~K. Mitra.
\newblock \emph{Generalized Inverse of Matrices and its Applications}.
\newblock John Wiley \& Sons, Inc., New York-London-Sydney, 1971.

\bibitem[Rice(1966)]{rice1966theory}
J.~R. Rice.
\newblock A theory of condition.
\newblock \emph{SIAM J. Numer. Anal.}, 3:\penalty0 287--310, 1966.

\bibitem[Rohn(1989)]{rohn1989new}
J.~Rohn.
\newblock New condition numbers for matrices and linear systems.
\newblock \emph{Computing}, 41\penalty0 (1-2):\penalty0 167--169, 1989.

\bibitem[Skeel(1979)]{skeel1979scaling}
R.~D. Skeel.
\newblock Scaling for numerical stability in {G}aussian elimination.
\newblock \emph{J. Assoc. Comput. Mach.}, 26\penalty0 (3):\penalty0 494--526,
  1979.

\bibitem[Stewart and Sun(1990)]{stewart1990matrix}
G.~W. Stewart and J.~G. Sun.
\newblock \emph{Matrix Perturbation Theory}.
\newblock Academic Press, New York, 1990.

\bibitem[Vandebril et~al.(2005)Vandebril, Van~Barel, and
  Mastronardi]{vandebril2005note}
R.~Vandebril, M.~Van~Barel, and N.~Mastronardi.
\newblock A note on the representation and definition of semiseparable
  matrices.
\newblock \emph{Numer. Linear Algebra Appl.}, 12\penalty0 (8):\penalty0
  839--858, 2005.

\bibitem[Vandebril et~al.(2008{\natexlab{a}})Vandebril, Van~Barel, and
  Mastronardi]{vandebril2007matrix1}
R.~Vandebril, M.~Van~Barel, and N.~Mastronardi.
\newblock \emph{Matrix {C}omputations and {S}emiseparable {M}atrices},
  volume~1.
\newblock Johns Hopkins University Press, Baltimore, MD, 2008{\natexlab{a}}.

\bibitem[Vandebril et~al.(2008{\natexlab{b}})Vandebril, Van~Barel, and
  Mastronardi]{vandebril2007matrix2}
R.~Vandebril, M.~Van~Barel, and N.~Mastronardi.
\newblock \emph{Matrix {C}omputations and {S}emiseparable {M}atrices},
  volume~2.
\newblock Johns Hopkins University Press, Baltimore, MD, 2008{\natexlab{b}}.

\bibitem[Wang et~al.(2018)Wang, Wei, and Qiao]{wang2018generalized}
G.~Wang, Y.~Wei, and S.~Qiao.
\newblock \emph{Generalized Inverses: Theory and Computations}.
\newblock Springer, Science Press, Beijing, {S}econd edition, 2018.

\bibitem[Wei and Wang(2003)]{wei2003condition}
Y.~Wei and D.~Wang.
\newblock Condition numbers and perturbation of the weighted {M}oore-{P}enrose
  inverse and weighted linear least squares problem.
\newblock \emph{Appl. Math. Comput.}, 145\penalty0 (1):\penalty0 45--58, 2003.

\bibitem[Wei et~al.(2005)Wei, Xu, Qiao, and Diao]{yimin2005componentwise}
Y.~Wei, W.~Xu, S.~Qiao, and H.~Diao.
\newblock Componentwise condition numbers for generalized matrix inversion and
  linear least squares.
\newblock \emph{Numer. Math. J. Chinese Univ.}, 14\penalty0 (3):\penalty0
  277--286, 2005.

\bibitem[Wei et~al.(2007)Wei, Diao, and Qiao]{wei2007condition}
Y.~Wei, H.~Diao, and S.~Qiao.
\newblock Condition number for weighted linear least squares problem.
\newblock \emph{J. Comput. Math.}, 25\penalty0 (5):\penalty0 561--572, 2007.

\bibitem[Wu(2020)]{thesis}
X.~Wu.
\newblock \emph{Structured condition numbers for least squares problems with
  parameterized quasiseparable coefficient matrices}.
\newblock Northeast Normal University, Master thesis, under the supervision of
  H{.} Diao, 2020.

\bibitem[Xu et~al.(2006)Xu, Wei, and Qiao]{xu2006condition}
W.~Xu, Y.~Wei, and S.~Qiao.
\newblock Condition numbers for structured least squares problems.
\newblock \emph{BIT}, 46\penalty0 (1):\penalty0 203--225, 2006.

\bibitem[Yang(2021)]{yang2021accurate}
Z.~Yang.
\newblock Accurate computations of eigenvalues of quasi-{C}auchy-{V}andermonde
  matrices.
\newblock \emph{Linear Algebra Appl.}, 622:\penalty0 268--293, 2021.

\end{thebibliography}
\bibliographystyle{abbrvnat}
\end{document}